\documentclass{amsart}
\usepackage{amsthm, graphicx, graphics, latexsym, mathtools, color, amssymb}
\usepackage{psfrag,epsfig}
\usepackage{multirow, multicol}
\usepackage{subfigure}
\usepackage[latin1]{inputenc}
\usepackage{tikz}
\usetikzlibrary{shapes,snakes,arrows}
\usepackage{algorithm}
\usepackage{algorithmic}

\newtheorem{lemma}{Lemma}[section]
\newtheorem{corollary}[lemma]{Corollary}
\newtheorem{theorem}[lemma]{Theorem}
\newtheorem{example}[lemma]{Example}
\newtheorem{proposition}[lemma]{Proposition}

\theoremstyle{definition}
\newtheorem{definition}[lemma]{Definition}
\theoremstyle{notation}
\newtheorem{notation}[lemma]{Notation}

\DeclareMathOperator{\lcm}{lcm}
\DeclareMathOperator{\cox}{cox}
\DeclareMathOperator{\Cox}{Cox}
\DeclareMathOperator{\modm}{mod}
\newcommand{\ivec}{\underline{i}}

\def\Prufer#1#2#3#4{
	\coordinate (x) at (#1,#2);
	\fill (x) circle (.1);
	\draw[#4] (x) .. controls (#1,#2+2.5) and (#1+.5,#2+3) .. (#1+#3,#2+3);
}

\def\adic#1#2#3#4{
	\coordinate (x) at (#1,#2);
	\fill (x) circle (.1);
	\draw[#4] (x) .. controls (#1,#2+2.5) and (#1-.5,#2+3) .. (#1-#3,#2+3);
}

\def\hadic#1#2#3#4{
	\coordinate (x) at (#1,#2);
	\fill (x) circle (.1);
	\draw[#4] (x) .. controls (#1,#2-2.5) and (#1+.5,#2-3) .. (#1+#3,#2-3);
}

\def\hPrufer#1#2#3#4{
	\coordinate (x) at (#1,#2);
	\fill (x) circle (.1);
	\draw[#4] (x) .. controls (#1,#2-2.5) and (#1-.5,#2-3) .. (#1-#3,#2-3);
}

\title[Asymptotic triangulations and Coxeter transformations of $C_{p,q}$]{Asymptotic triangulations and Coxeter transformations of the annulus}
\author[H.~Vogel]{Hannah~Vogel, with an appendix by Anna~Felikson \and  Pavel~Tumarkin}
\address{University of Graz\\
        NAWI Graz\\
        Institute for Mathematics and Scientific Computing\\
        Heinrichstra\ss e 36\\
        8010 Graz, Austria}
\email{hannah.vogel@uni-graz.at}

\begin{document}
\maketitle
\begin{abstract}
Asymptotic triangulations can be viewed as limits of triangulations under the action of the mapping class group. In the case of the annulus, such triangulations have been introduced in \cite{BD}. We construct an alternative method of obtaining these asymptotic triangulations using Coxeter transformations. This provides us with an algebraic and combinatorial framework for studying these limits via the associated quivers.
\end{abstract}


\section{Introduction}

Coxeter transformation are important in the study of representations of algebras, quivers, partially ordered sets, and lattices. In this article, we describe how Coxeter transformations act on triangulations. We first briefly introduce Coxeter transformations.

Let $\Gamma$ be an oriented graph with vertex set $\Gamma_0$, $|\Gamma_0| = n$, and edge set $\Gamma_1$. An arrow $\alpha \in \Gamma_1$, $\alpha: i \rightarrow j$, \emph{starts} at $s(\alpha) = i$, and \emph{terminates} at $t(\alpha) = j$.

To $\Gamma$, we can associate its \emph{Euler form}, a bilinear form on $\mathbb{Z}^n$:

\[ \langle -, - \rangle : \mathbb{Z}^n \times \mathbb{Z}^n \longrightarrow \mathbb{Z} \mbox{\,\,\,\, with \,\,\,\,} \langle x,y\rangle = \sum_{i \in \Gamma_0}x_iy_i - \sum_{\alpha \in \Gamma_1}x_{s(\alpha)}y_{t(\alpha)}.
\]

We obtain the following symmetric bilinear form on $\mathbb{Z}^n$:
\[
(x,y) = \langle x,y\rangle + \langle y,x\rangle.
\]

If $\Gamma$ has no loops, we can define the \emph{reflection map} with respect to a vertex $i$:
\[
\sigma_i : \mathbb{Z}^n \longrightarrow \mathbb{Z}^n \mbox{\,\,\,\, with \,\,\,\,} \sigma_i(x) = x - \frac{2(x,e_i)}{(e_i, e_i)}e_i,
\]
where $e_i$ is the $i$th coordinate vector. The $\sigma_i$ are automorphisms of $\mathbb{Z}^n$ of order two that preserve the bilinear form $(-,-)$. 

 A vertex $i$ of $\Gamma$ is called a \emph{source} (resp. \emph{sink}) if there is no arrow in $\Gamma$ ending (resp. starting) at $i$. If $i$ is a source or a sink, the graph $\sigma_i\Gamma$ is obtained from $\Gamma$ by reversing all arrows which start or end at $i$.

\begin{definition}
An ordering $i_1, \ldots, i_n$ of the vertices of $\Gamma$ is called \emph{source-admissible} if for each $p$ the vertex $i_p$ is a source for $\sigma_{i_{p-1}} \ldots \sigma_{i_1}\Gamma$.
\label{def:admiss}
\end{definition}

In this case we have that $$ \sigma_{i_n}\sigma_{i_{n-1}} \ldots \sigma_{i_2} \sigma_{i_1}\Gamma = \Gamma.$$
Now if $\Gamma$ is an acyclic graph, and $i_1, \ldots, i_n$ is an admissible ordering of its vertices, then the automorphism
\[
c: \mathbb{Z}^n \longrightarrow \mathbb{Z}^n \mbox{\,\,\,\, with \,\,\,\,} c(x) = \sigma_{i_n} \ldots \sigma_{i_1}(x)
\]
is called a \emph{Coxeter transformation}.

For oriented trees, there always exists an admissible ordering $\ivec$. To every such sequence, we assign a Coxeter transformation depending on the order of the vertices in $\ivec$:
\[
c = \sigma_{i_n}\sigma_{i_{n-1}} \ldots \sigma_{i_2} \sigma_{i_1}
\] 
For every orientation of a given simply-laced Dynkin diagram, every admissible ordering gives rise to the same Coxeter transformation \cite{K}. If the underlying graph is not a tree, we need to consider the orientations of the arrows in the graph before we can assign a Coxeter transformation to the graph.

\medskip
Asymptotic triangulations were introduced by Baur and Dupont in \cite{BD}, with respect to unpunctured marked surfaces. These asymptotic triangulations can be mutated as usual triangulations, and they provide a natural way to compactify the usual exchange graph of the triangulations of an annulus. 

\medskip
In this article, we will focus on triangulations of annuli. It is known that such triangulations give rise to cluster algebras of extended Dynkin type $\tilde{A}_n$. We introduce triangulations and quivers in Section 2, and define asymptotic triangulations and their associated quivers in Section 3.  In Sections 4, 5 \& 6 we discuss sequences of flips in triangulations that correspond to Coxeter transformations on the associated quiver, and we describe what happens in the limit of these transformations. Appendix A gives an alternative way to perform quiver mutation for quivers associated to asymptotic triangulations by using potentials, and Appendix B introduces an alternative cluster structure on asymptotic triangulations, and gives a geometric interpretation of these structures. 

\medskip
\textbf{Acknowledgements:} The author would like to thank her supervisor Karin Baur for the many helpful discussions, as well as Gregg Musiker for his insightful discussions on using quivers with potentials, and Anna Felikson and Pavel Tumarkin for their work that is included in Appendix B. 
The author was supported by the Austrian Science Fund (FWF): projects No.\ P25141-N26 and W1230, and acknowledges support from NAWI Graz.

\section{Definitions and notation}

\subsection{Triangulations}

Let $S$ be a connected, oriented Riemann surface with boundary, and let $M$ be a finite set of marked points in the closure of $S$. We assume that $M$ is non-empty, and there is at least one marked point on each boundary component. We choose a counter-clockwise orientation of $S$ and label the marked points on each boundary component in a counter-clockwise order.

\begin{definition} An \emph{arc} $\gamma$ of a marked surface $S$ is a curve whose endpoints are marked points of $S$, and which does not intersect itself in the interior of $S$. The interior of the arc is disjoint from the boundary of $S$ and it does not cut out an unpunctured monogon or digon.
\end{definition}

\begin{figure}[ht]

\begin{tikzpicture}[scale = 1.5]
		\tikzstyle{every node} = [font = \small]
		\foreach \x in {0}
		{
			\foreach \y in {0}
			{
			\draw (\x,\y) circle (1);
			\fill(\x+.866,\y+0.5) circle (.05);
			\fill(\x-.866,\y+0.5) circle (.05);	
			\fill(\x-.5,\y-0.866) circle (.05);
			\fill(\x+.5,\y-0.866) circle (.05);
			\fill(\x+0,\y-.4) circle (.05);

			\draw [thick,red] (\x-.866,\y+.5) .. controls (\x-.5,\y+.3) and (\x+.5,\y+.3) ..  (\x+.866,\y+.5);
			\fill (\x-.3,\y+.4) node [above] {$\gamma$};
			\draw [thick,red] (\x-.5,\y-.866) .. controls (\x-.25,\y-.02) and (\x+.25,\y-.02) ..  (\x+.5,\y-.866);
			\fill (\x+.25,\y-.3) node [right] {$\eta$};
		
			}
		}
	\end{tikzpicture}
\caption{Two arcs $\gamma$, $\eta$ of a marked surface $S$.}
\label{two_curves}
\end{figure}
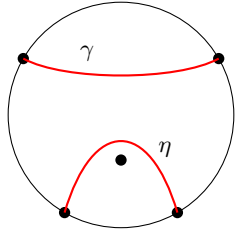

We consider arcs in $(S,M)$ up to isotopy. We write $\gamma = [i,j]$ to denote the arc with endpoints $i,j \in M$. Note that depending on the surface, there may be multiple (non-isotopic) arcs with the same endpoints.

\begin{definition} 
An \emph{ideal triangulation} $T$ of a surface $S$ is a maximal collection of pairwise non-intersecting arcs of $S$.
\end{definition}

\begin{figure}
\centering
\subfigure{
	\begin{tikzpicture}[scale = .75]
		\tikzstyle{every node} = [font = \small]
		\foreach \x in {0}
		{
			\foreach \y in {-8}
			{
			\fill (\x,\y+1.75) circle (.05);
			\fill (\x,\y+1.75) node [above] {\tiny{$1$}};
			\fill(\x+1.5158,\y+.875) circle (.05);
			\fill (\x+1.5158,\y+.875) node [right] {\tiny{$2 $}};
			\fill (\x+1.5158,\y-.875) circle (.05);
			\fill (\x+1.5158,\y-.875) node [right] {\tiny{$3$}};
			\fill(\x,\y-1.75) circle (.05);
			\fill (\x,\y-1.75) node [below] {\tiny{$4$}};
			\fill(\x-1.5158,\y-.875) circle (.05);
			\fill (\x-1.5158,\y-.875) node [left] {\tiny{$5$}};
			\fill (\x-1.5158,\y+.875) circle (.05);
			\fill (\x-1.5158,\y+.875) node [left] {\tiny{$6$}};
		
			\draw [] (\x,\y+1.75)--(\x+1.5158,\y+.875); 
			\draw [] (\x+1.5158,\y+.875) -- (\x+1.5158,\y-.875); 
			\draw [] (\x+1.5158,\y-.875)--(\x,\y-1.75); 
			\draw [] (\x,\y-1.75) -- (\x-1.5158,\y-.875); 
			\draw [] (\x-1.5158,\y-.875)--(\x-1.5158,\y+.875); 
			\draw [] (\x-1.5158,\y+.875) -- (\x,\y+1.75); 
		
			\draw [] (\x,\y+1.75) -- (\x-1.5158,\y-.875);
			\draw [] (\x,\y+1.75) -- (\x,\y-1.75);
			\draw [] (\x,\y+1.75) -- (\x+1.5158,\y-.875);
		
			}
		}
	\end{tikzpicture}
	}
	\quad
	\subfigure{
		\begin{tikzpicture}[scale = .75]
		\tikzstyle{every node} = [font = \small]
		\foreach \x in {0}
		{
			\foreach \y in {-8}
			{
			\fill (\x,\y+1.75) circle (.05);
			\fill (\x,\y+1.75) node [above] {\tiny{$1$}};
			\fill(\x+1.5158,\y+.875) circle (.05);
			\fill (\x+1.5158,\y+.875) node [right] {\tiny{$2 $}};
			\fill (\x+1.5158,\y-.875) circle (.05);
			\fill (\x+1.5158,\y-.875) node [right] {\tiny{$3$}};
			\fill(\x,\y-1.75) circle (.05);
			\fill (\x,\y-1.75) node [below] {\tiny{$4$}};
			\fill(\x-1.5158,\y-.875) circle (.05);
			\fill (\x-1.5158,\y-.875) node [left] {\tiny{$5$}};
			\fill (\x-1.5158,\y+.875) circle (.05);
			\fill (\x-1.5158,\y+.875) node [left] {\tiny{$6$}};
		
			\draw [] (\x,\y+1.75)--(\x+1.5158,\y+.875); 
			\draw [] (\x+1.5158,\y+.875) -- (\x+1.5158,\y-.875); 
			\draw [] (\x+1.5158,\y-.875)--(\x,\y-1.75); 
			\draw [] (\x,\y-1.75) -- (\x-1.5158,\y-.875); 
			\draw [] (\x-1.5158,\y-.875)--(\x-1.5158,\y+.875); 
			\draw [] (\x-1.5158,\y+.875) -- (\x,\y+1.75); 
		
			\draw [] (\x-1.5158,\y+.875) -- (\x+1.5158,\y+.875);
			\draw [] (\x+1.5158,\y+.875) -- (\x-1.5158,\y-.875);
			\draw [] (\x-1.5158,\y-.875) -- (\x+1.5158,\y-.875);
		
			}
		}
	\end{tikzpicture}
	}
	\caption{Two triangulations of the hexagon $P_6$.}
	\label{triang_hex2}
	\end{figure}
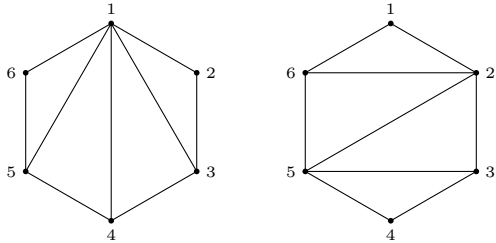 
	
Throughout this paper, all triangulations will be ideal. The arcs of a triangulation $T$ cut $S$ into (\emph{ideal}) \emph{triangles}. Triangles are three-sided regions, and self-folded triangles may occur.

\begin{definition}
A \emph{flip} $\mu$ of an arc in a triangulation is a move that replaces an arc of any given quadrilateral with the other arc in the quadrilateral. We sometimes use $\mu_k$ to indicate mutation at an arc $d_k$.
\end{definition}

\begin{figure}[ht]
\centering
\subfigure{
	\begin{tikzpicture}[scale = 1]
		\tikzstyle{every node} = [font = \small]
		\foreach \x in {0}
		{
			\foreach \y in {0}
			{
			\draw(\x+0,\y+0) rectangle (3,2);
			\fill(\x+3,\y+0) circle (.07);
			\fill (\x+3,\y+0) node [right] {$k$};

			\fill(\x+3,\y+2) circle (.07);
			\fill (\x+3,\y+2) node [right] {$j$};
			
			\fill(\x+0,\y+2) circle (.07);
			\fill (\x+0,\y+2) node [left] {$i$};
			
			\fill(\x+0,\y+0) circle (.07);
			\fill (\x+0,\y+0) node [left] {$l$};
			
			\draw[thick] (\x,\y) -- (\x+3,\y+2);
			\fill (\x+1.35,\y+.75) node [right] {\tiny{$\gamma = [j,l]$}};
			
						}
		}
	\draw[<->] (4,1) -- (4.5,1);
	\end{tikzpicture}
	}
	\quad
	\subfigure{
	\begin{tikzpicture}[scale = 1]
		\tikzstyle{every node} = [font = \small]
		\foreach \x in {0}
		{
			\foreach \y in {0}
			{

			\draw(\x+0,\y+0) rectangle (3,2);

						\fill(\x+3,\y+0) circle (.07);
			\fill (\x+3,\y+0) node [right] {$k$};

			\fill(\x+3,\y+2) circle (.07);
			\fill (\x+3,\y+2) node [right] {$j$};
			
			\fill(\x+0,\y+2) circle (.07);
			\fill (\x+0,\y+2) node [left] {$i$};
			
			\fill(\x+0,\y+0) circle (.07);
			\fill (\x+0,\y+0) node [left] {$l$};
			
			\draw[thick] (\x,\y+2) -- (\x+3,\y);
			\fill (\x+1.35,\y+1.25) node [right] {\tiny{$\gamma' = [i,k]$}};

		}
		}
	\end{tikzpicture}
	}

\caption{Flip of the arc $\gamma$}
\end{figure}
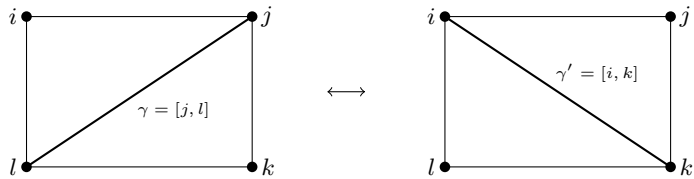

Given a marked surface $(S,M)$, there may exist many different triangulations of $S$ (cf. Figure \ref{triang_hex2}). By a theorem of \cite{H}, any two triangulations of a surface $S$ are related by a sequence of flips. For an example, see Figure \ref{seq_flips}.

\begin{figure}
\centering
\subfigure{
	\begin{tikzpicture}[scale = .5]
		\tikzstyle{every node} = [font = \small]
		\foreach \x in {0}
		{
			\foreach \y in {-8}
			{
			\fill (\x,\y+1.75) circle (.05);
			\fill (\x,\y+1.75) node [above] {\tiny{$1$}};
			\fill(\x+1.5158,\y+.875) circle (.05);
			\fill (\x+1.5158,\y+.875) node [right] {\tiny{$2 $}};
			\fill (\x+1.5158,\y-.875) circle (.05);
			\fill (\x+1.5158,\y-.875) node [right] {\tiny{$3$}};
			\fill(\x,\y-1.75) circle (.05);
			\fill (\x,\y-1.75) node [below] {\tiny{$4$}};
			\fill(\x-1.5158,\y-.875) circle (.05);
			\fill (\x-1.5158,\y-.875) node [left] {\tiny{$5$}};
			\fill (\x-1.5158,\y+.875) circle (.05);
			\fill (\x-1.5158,\y+.875) node [left] {\tiny{$6$}};
		
			\draw [] (\x,\y+1.75)--(\x+1.5158,\y+.875); 
			\draw [] (\x+1.5158,\y+.875) -- (\x+1.5158,\y-.875); 
			\draw [] (\x+1.5158,\y-.875)--(\x,\y-1.75); 
			\draw [] (\x,\y-1.75) -- (\x-1.5158,\y-.875); 
			\draw [] (\x-1.5158,\y-.875)--(\x-1.5158,\y+.875); 
			\draw [] (\x-1.5158,\y+.875) -- (\x,\y+1.75); 
		
			\draw [] (\x,\y+1.75) -- (\x-1.5158,\y-.875);
			\draw [] (\x,\y+1.75) -- (\x,\y-1.75);
			\draw [] (\x,\y+1.75) -- (\x+1.5158,\y-.875);
		
			}
		}
		\draw[<->] (3,-8) -- (4,-8);
	\end{tikzpicture}
	}
	\quad
	\subfigure{
	\begin{tikzpicture}[scale = .5]
		\tikzstyle{every node} = [font = \small]
		\foreach \x in {0}
		{
			\foreach \y in {-8}
			{
			\fill (\x,\y+1.75) circle (.05);
			\fill (\x,\y+1.75) node [above] {\tiny{$1$}};
			\fill(\x+1.5158,\y+.875) circle (.05);
			\fill (\x+1.5158,\y+.875) node [right] {\tiny{$2 $}};
			\fill (\x+1.5158,\y-.875) circle (.05);
			\fill (\x+1.5158,\y-.875) node [right] {\tiny{$3$}};
			\fill(\x,\y-1.75) circle (.05);
			\fill (\x,\y-1.75) node [below] {\tiny{$4$}};
			\fill(\x-1.5158,\y-.875) circle (.05);
			\fill (\x-1.5158,\y-.875) node [left] {\tiny{$5$}};
			\fill (\x-1.5158,\y+.875) circle (.05);
			\fill (\x-1.5158,\y+.875) node [left] {\tiny{$6$}};
		
			\draw [] (\x,\y+1.75)--(\x+1.5158,\y+.875); 
			\draw [] (\x+1.5158,\y+.875) -- (\x+1.5158,\y-.875); 
			\draw [] (\x+1.5158,\y-.875)--(\x,\y-1.75); 
			\draw [] (\x,\y-1.75) -- (\x-1.5158,\y-.875); 
			\draw [] (\x-1.5158,\y-.875)--(\x-1.5158,\y+.875); 
			\draw [] (\x-1.5158,\y+.875) -- (\x,\y+1.75); 
		
			\draw [] (\x,\y+1.75) -- (\x-1.5158,\y-.875);
			\draw [] (\x,\y+1.75) -- (\x,\y-1.75);
			\draw [thick] (\x,\y-1.75) -- (\x+1.5158,\y+.875);
		
			}
		}
		\draw[<->] (3,-8) -- (4,-8);
	\end{tikzpicture}
	}
		\quad
	\subfigure{
	\begin{tikzpicture}[scale = .5]
		\tikzstyle{every node} = [font = \small]
		\foreach \x in {0}
		{
			\foreach \y in {-8}
			{
			\fill (\x,\y+1.75) circle (.05);
			\fill (\x,\y+1.75) node [above] {\tiny{$1$}};
			\fill(\x+1.5158,\y+.875) circle (.05);
			\fill (\x+1.5158,\y+.875) node [right] {\tiny{$2 $}};
			\fill (\x+1.5158,\y-.875) circle (.05);
			\fill (\x+1.5158,\y-.875) node [right] {\tiny{$3$}};
			\fill(\x,\y-1.75) circle (.05);
			\fill (\x,\y-1.75) node [below] {\tiny{$4$}};
			\fill(\x-1.5158,\y-.875) circle (.05);
			\fill (\x-1.5158,\y-.875) node [left] {\tiny{$5$}};
			\fill (\x-1.5158,\y+.875) circle (.05);
			\fill (\x-1.5158,\y+.875) node [left] {\tiny{$6$}};
		
			\draw [] (\x,\y+1.75)--(\x+1.5158,\y+.875); 
			\draw [] (\x+1.5158,\y+.875) -- (\x+1.5158,\y-.875); 
			\draw [] (\x+1.5158,\y-.875)--(\x,\y-1.75); 
			\draw [] (\x,\y-1.75) -- (\x-1.5158,\y-.875); 
			\draw [] (\x-1.5158,\y-.875)--(\x-1.5158,\y+.875); 
			\draw [] (\x-1.5158,\y+.875) -- (\x,\y+1.75); 
		
			\draw [] (\x,\y+1.75) -- (\x-1.5158,\y-.875);
			\draw [thick] (\x-1.5158,\y-.875) -- (\x+1.5158,\y+.875);
			\draw [] (\x,\y-1.75) -- (\x+1.5158,\y+.875);
		
			}
		}
	\end{tikzpicture}
	}
	\quad \quad \quad
	\subfigure{
	\begin{tikzpicture}[scale = .5]
		\tikzstyle{every node} = [font = \small]
		
		\draw[<->] (-3,-8) -- (-4,-8);
		
		\foreach \x in {0}
		{
			\foreach \y in {-8}
			{
			\fill (\x,\y+1.75) circle (.05);
			\fill (\x,\y+1.75) node [above] {\tiny{$1$}};
			\fill(\x+1.5158,\y+.875) circle (.05);
			\fill (\x+1.5158,\y+.875) node [right] {\tiny{$2 $}};
			\fill (\x+1.5158,\y-.875) circle (.05);
			\fill (\x+1.5158,\y-.875) node [right] {\tiny{$3$}};
			\fill(\x,\y-1.75) circle (.05);
			\fill (\x,\y-1.75) node [below] {\tiny{$4$}};
			\fill(\x-1.5158,\y-.875) circle (.05);
			\fill (\x-1.5158,\y-.875) node [left] {\tiny{$5$}};
			\fill (\x-1.5158,\y+.875) circle (.05);
			\fill (\x-1.5158,\y+.875) node [left] {\tiny{$6$}};
		
			\draw [] (\x,\y+1.75)--(\x+1.5158,\y+.875); 
			\draw [] (\x+1.5158,\y+.875) -- (\x+1.5158,\y-.875); 
			\draw [] (\x+1.5158,\y-.875)--(\x,\y-1.75); 
			\draw [] (\x,\y-1.75) -- (\x-1.5158,\y-.875); 
			\draw [] (\x-1.5158,\y-.875)--(\x-1.5158,\y+.875); 
			\draw [] (\x-1.5158,\y+.875) -- (\x,\y+1.75); 
		
			\draw [thick] (\x-1.5158,\y+.875) -- (\x+1.5158,\y+.875);
			\draw [] (\x-1.5158,\y-.875) -- (\x+1.5158,\y+.875);
			\draw [] (\x,\y-1.75) -- (\x+1.5158,\y+.875);
		
			}
		}
		\draw[<->] (3,-8) -- (4,-8);
	\end{tikzpicture}
	}
		\quad
	\subfigure{
	\begin{tikzpicture}[scale = .5]
		\tikzstyle{every node} = [font = \small]
		\foreach \x in {0}
		{
			\foreach \y in {-8}
			{
			\fill (\x,\y+1.75) circle (.05);
			\fill (\x,\y+1.75) node [above] {\tiny{$1$}};
			\fill(\x+1.5158,\y+.875) circle (.05);
			\fill (\x+1.5158,\y+.875) node [right] {\tiny{$2 $}};
			\fill (\x+1.5158,\y-.875) circle (.05);
			\fill (\x+1.5158,\y-.875) node [right] {\tiny{$3$}};
			\fill(\x,\y-1.75) circle (.05);
			\fill (\x,\y-1.75) node [below] {\tiny{$4$}};
			\fill(\x-1.5158,\y-.875) circle (.05);
			\fill (\x-1.5158,\y-.875) node [left] {\tiny{$5$}};
			\fill (\x-1.5158,\y+.875) circle (.05);
			\fill (\x-1.5158,\y+.875) node [left] {\tiny{$6$}};
		
			\draw [] (\x,\y+1.75)--(\x+1.5158,\y+.875); 
			\draw [] (\x+1.5158,\y+.875) -- (\x+1.5158,\y-.875); 
			\draw [] (\x+1.5158,\y-.875)--(\x,\y-1.75); 
			\draw [] (\x,\y-1.75) -- (\x-1.5158,\y-.875); 
			\draw [] (\x-1.5158,\y-.875)--(\x-1.5158,\y+.875); 
			\draw [] (\x-1.5158,\y+.875) -- (\x,\y+1.75); 
		
			\draw [] (\x-1.5158,\y+.875) -- (\x+1.5158,\y+.875);
			\draw [] (\x-1.5158,\y-.875) -- (\x+1.5158,\y+.875);
			\draw [thick] (\x-1.5158,\y-.875) -- (\x+1.5158,\y-.875);
		
			}
		}
	\end{tikzpicture}
	}

\caption{Sequence of flips.}
\label{seq_flips}
\end{figure}
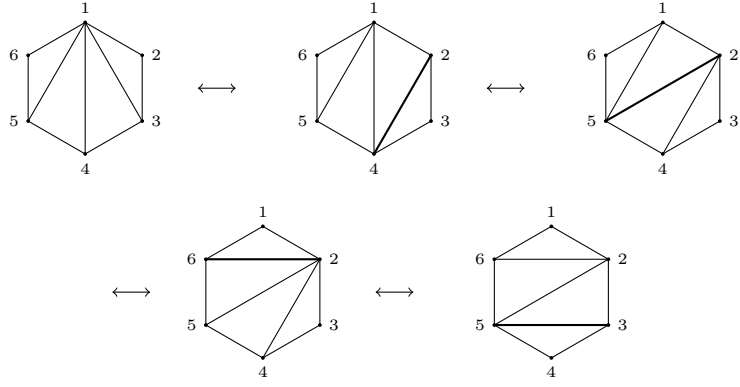

For the rest of this paper, we will be considering triangulations of the annulus (a region bounded by two concentric circles).

\begin{definition}
$C_{p,q}$ denotes the annulus with $p > 0$ points marked on the outer boundary component $\partial$, and $q > 0$ marked points on the inner boundary component $\partial'$. Without loss of generality, we assume that $p \ge q$.
\end{definition}

\begin{definition}
An arc in $C_{p,q}$ is called \emph{peripheral} if its two endpoints lie on the same boundary component. It is called \emph{bridging} otherwise.
\end{definition}

\begin{definition}
Let $T$ be a triangulation of $C_{p,q}$. A peripheral arc $\gamma \in T$ is called \emph{bounding} (with respect to $T$) if $\mu_\gamma$ is a bridging arc.
\label{def:bound}
\end{definition}

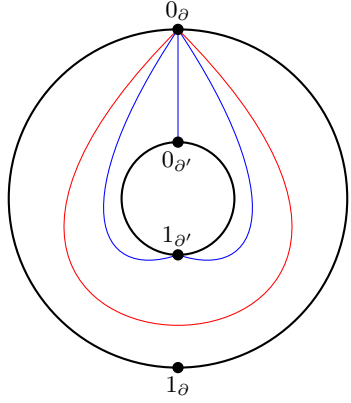
\begin{figure}
\centering
	\begin{tikzpicture}[scale = .75]
		\tikzstyle{every node} = [font = \small]
		\foreach \x in {0}
		{
			\draw[thick] (0,0) circle (3cm);
			\draw[thick] (0,0) circle (1cm);
				
			\draw [blue] (0,3) .. controls (2,0) and (1.5,-1.5) .. (0,-1);
			\draw [blue] (0,3) .. controls (-2,0) and (-1.5,-1.5) .. (0,-1);
			\draw [red] (0,3) .. controls (7,-4) and (-7,-4) .. (0,3);
			\draw[blue] (0,3)--(0,1);
			
			\fill (0,1) circle (.1);
			\fill (0,1) node [below] {$0_{\partial'}$};
			\fill(0,3) circle (.1);
			\fill (0,3) node [above] {$0_\partial$};
			\fill (0,-1) circle (.1);
			\fill (0,-1) node [above] {$1_{\partial'}$};
			\fill(0,-3) circle (.1);
			\fill (0,-3) node [below] {$1_{\partial}$};
			
		}
	\end{tikzpicture}
\caption{A triangulation of $C_{p,q}$. The bridging arcs are marked in blue and the peripheral arc is marked in red.}
\label{ann_triangulation}
\end{figure}

The following result appears in \cite[Lemma 1.7]{BD}. For convenience, we include a proof below.
\begin{lemma}
A triangulation $T$ of the annulus contains at least 2 bridging arcs.
\label{ann_bridge}
\end{lemma}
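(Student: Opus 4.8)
The plan is to argue by contradiction: assume $T$ has at most one bridging arc and derive a contradiction, by way of a double count that ties bridging arcs to triangles meeting both boundary components.

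Call a triangle of $T$ \emph{mixed} if it has vertices on both $\partial$ and $\partial'$, and \emph{pure} otherwise. First I would record two elementary facts. (i) Since $C_{p,q}$ has no punctures, it has no self-folded triangles, so every arc is a side of exactly two distinct triangles, and no triangle repeats a side. (ii) If $\gamma \in T$ is bridging, then each of the two triangles bordering $\gamma$ contains the endpoints of $\gamma$ -- one on $\partial$, one on $\partial'$ -- and so is mixed; conversely a mixed triangle has two vertices on one component and one on the other, hence exactly two of its sides are bridging arcs and one is not, while a pure triangle has no bridging side. Counting incidences between bridging arcs and triangles two ways (each bridging arc borders $2$ triangles; each mixed triangle has $2$ bridging sides; pure triangles have none) yields $2b = 2m$, so $b = m$, where $b$ is the number of bridging arcs and $m$ the number of mixed triangles.

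Next I would dispose of the cases $b \le 1$. If $b = 0$, then $m = 0$ and every triangle is pure; since each boundary component has at least one boundary segment, which is a side of a triangle with two vertices on that component, there is at least one pure-$\partial$ triangle and at least one pure-$\partial'$ triangle. Let $X$ (resp.\ $Y$) be the union of the closed pure-$\partial$ (resp.\ pure-$\partial'$) triangles; then $X, Y$ are nonempty closed sets with $X \cup Y = C_{p,q}$, and they are disjoint, because a common point would be a shared vertex (interiors are disjoint, and distinct arcs of $T$ meet only at marked points) lying on both $\partial$ and $\partial'$ -- impossible. This contradicts connectedness of $C_{p,q}$. If $b = 1$, then $m = 1$: the unique mixed triangle has, by (ii), two bridging sides, which must both be the unique bridging arc $\gamma$; but that is a repeated side, contradicting (i). Hence $b \ge 2$.

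I expect the only subtle point to be (i): the standard fact that an unpunctured surface has no self-folded triangle (equivalently, no triangle with a repeated side). Granting this, the rest is bookkeeping -- one should just check that the identity $b = m$ is insensitive to degenerate triangles (repeated vertices, loop sides), which it is. A more topological alternative would put the core curve of the annulus in minimal position with respect to $T$: since every peripheral arc bounds a disk, any excursion of the core into that disk can be pushed out, so the core meets only bridging arcs; if $b \le 1$ this confines the noncontractible core to the closure of a single triangle, which is absurd. I would present the combinatorial version.
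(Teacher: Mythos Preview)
Your proof is correct, and it takes a genuinely different route from the paper's.

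The paper argues geometrically and rather tersely: on each boundary component it locates a marked point that is not ``covered from above'' by any peripheral arc (the outermost peripheral loop on $\partial$ has a base point $i_\partial$, and similarly there is a point $j_{\partial'}$ on $\partial'$); the triangle sitting just beyond this outermost peripheral arc must then reach across to the other boundary, producing two bridging arcs explicitly. In other words, the paper \emph{finds} the two bridging arcs directly.

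Your argument is instead a clean double count. The identity $b=m$ (bridging arcs $\leftrightarrow$ mixed triangles) together with connectedness of $C_{p,q}$ kills $b=0$, and the absence of self-folded triangles in an unpunctured surface kills $b=1$. This is more combinatorial and, as written, more rigorous than the paper's sketch; it also generalises verbatim to any unpunctured surface with exactly two boundary components (and, with the obvious modification, shows that a triangulation of a connected unpunctured surface with several boundary components must contain enough bridging arcs to make the ``boundary graph'' connected). The paper's approach, by contrast, buys you something concrete: it tells you \emph{where} two bridging arcs sit, which is in the spirit of how bridging arcs get used later in the paper (e.g.\ in producing the bridging triangulation $\tilde T$).

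One small remark on your write-up: in the $b=0$ case you assert that $X$ and $Y$ can meet only at vertices, via ``distinct arcs of $T$ meet only at marked points''. You should also note that they cannot share an entire edge, since an edge of a pure-$\partial$ triangle has both endpoints on $\partial$ while an edge of a pure-$\partial'$ triangle has both endpoints on $\partial'$; this is implicit in what you wrote but worth making explicit. Otherwise the argument is complete.
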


\begin{proof}
Let $T$ be a triangulation of $C_{p,q}$. Then there is at least one point on each boundary component which does not have a peripheral arc lying above it. The triangulation $T$ requires at least two bridging arcs connecting these two points.

\begin{center}
\begin{tikzpicture}[scale = .4]
	\foreach \x in {0}
	{	
		\draw[thick] (\x-3,0) -- (\x+4,0);
		\draw[thick](\x-3,3) -- (\x+4,3);
			
		\draw[red] (\x-2,0) .. controls (\x-1,1) and (\x+2,1) .. (\x+3,0);
		\fill (\x-2,0) circle (.1);
		\fill(\x+3,0) circle (.1);
		\fill(\x+.5,0) circle (.1);
		\fill (\x-2,0) node [below] {$i_\partial$};
		\fill (\x+3,0) node [below] {$i_\partial$};
		
		\fill (\x+.5,3) circle (.1);
		\fill (\x+.5,3) node [above] {$j_{\partial'}$};
		\draw[red] (\x+.5,3) .. controls (\x+1.5,2.5) and (\x+2,2.5) .. (\x+3.5,2.5);
		\draw[red] (\x+.5,3) .. controls (\x-.5,2.5) and (\x-1.5,2.5) .. (\x-2.5,2.5);
		
		\draw[dashed,blue] (\x+.5,3)--(\x+3,0);
		\draw[dashed,blue] (\x+.5,3)--(\x-2,0);
	}
	
	\end{tikzpicture}
	\end{center}
\end{proof}

\subsection{Universal cover of the annulus}

It is convenient to work with the universal cover of the annulus. We will use the notation and universal cover as described in \cite{BD}. Let $C_{p,q}$ be an annulus. We identify $C_{p,q}$ with a cylinder of height 1 with $p$ marked points on the lower boundary, which corresponds to $\partial$, and $q$ marked points on the upper boundary, which corresponds to $\partial'$. We denote this cylinder by $Cyl_{p,q}$, where the marked points are placed equidistant from one another on each boundary. We keep the orientation of the annulus, so the marked points are labeled left to right on the lower boundary by $(0,0), (q,0), \ldots, (pq-q, 0)$, and $(pq,0)$. On the upper boundary we label the marked points from right to left by $(0,1), (p,1), \ldots, (pq-p, 1)$, and $(pq,1)$ (see Figure \ref{tri_ann_cyl} for an example of a triangulation of $C_{2,2}$ drawn as a cylinder $Cyl_{2,2}$, and see Figure \ref{ann_cyl} for a general example of the marked cylinder $Cyl_{p,q}$).

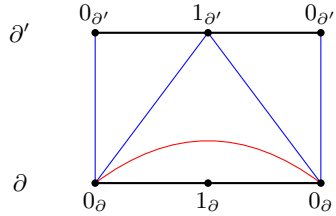
\begin{figure}[h!]
\centering
\begin{tikzpicture}[scale = .5]
		\tikzstyle{every node} = [font = \small]
		\foreach \x in {0}
		{
			\foreach \y in {-8}
			{
				\draw[thick] (\x-3,\y+4) -- (\x+3,\y+4);
				\fill (\x-5,\y+4) node {$\partial'$};
				\draw[thick] (\x-3,\y-0) -- (\x+3,\y-0);
				\fill (\x-5,\y-0) node {$\partial$};

				\draw[blue] (\x-3,\y+4) -- (\x-3,\y-0);
				\draw[blue] (\x-0,\y+4) -- (\x-3,\y-0);
				\draw[blue] (\x,\y+4) -- (\x+3,\y-0);
				\draw[blue] (\x+3,\y+4) -- (\x+3,\y-0);				
				\draw[red] (\x-3,\y-0) .. controls (\x-1,\y+1.5) and (\x+1,\y+1.5) .. (\x+3,\y-0);
		
				\foreach \t in {-3,0,3}
				{
					\fill (\x+\t,\y+4) circle (.1);
					\fill (\x+\t,\y-0) circle (.1);
				}
					
				\fill (\x-3,\y+4) node [above] {$0_{\partial'}$};
				\fill (\x-0,\y+4) node [above] {$1_{\partial'}$};
				\fill (\x+3,\y+4) node [above] {$0_{\partial'}$};

				\fill (\x-3,\y-0) node [below] {$0_{\partial}$};
				\fill (\x-0,\y-0) node [below] {$1_{\partial}$};
				\fill (\x+3,\y-0) node [below] {$0_{\partial}$};	
			}
		}
		\end{tikzpicture}
	\caption{A triangulation of $C_{2,2}$ represented as a cylinder.}
	\label{tri_ann_cyl}
	\end{figure}

\begin{figure}[h!]
	\begin{center}
	\begin{tikzpicture}[scale = .5]
		\tikzstyle{every node} = [font = \small]
		\foreach \x in {0}
		{
			\foreach \y in {-8}
			{
				\filldraw[fill=black!20!white, draw=black,dashed,thick] (\x-8,\y+2) rectangle (\x+8,\y-2);

				\foreach \t in {-8,-4,4,8}
				{
					\fill (\x+\t,\y+2) circle (.1);
				}
				
				\foreach \t in {-8,-5,-2,5,8}
				{
					\fill (\x+\t,\y-2) circle (.1);
				}

				\fill (\x-8,\y+2) node [above] {$(0,1)$};
				\fill (\x-4,\y+2) node [above] {$(p,1)$};
				\fill (\x-0,\y+2) node [above] {$\cdots$};
				\fill (\x+4,\y+2) node [above] {$(pq-p,1)$};
				\fill (\x+8,\y+2) node [above] {$(0,1)$};

				\fill (\x-8,\y-2) node [below] {$(0,0)$};
				\fill (\x-5,\y-2) node [below] {$(q,0)$};
				\fill (\x-2,\y-2) node [below] {$(2q,0)$};
				\fill (\x+1.5,\y-2) node [below] {$\cdots$};
				\fill (\x+5,\y-2) node [below] {$(pq-q,0)$};
				\fill (\x+8,\y-2) node [below] {$(0,0)$};
			}
		}
		\end{tikzpicture}
	\end{center}
\caption{Annulus as a cylinder.}
\label{ann_cyl}
\end{figure}
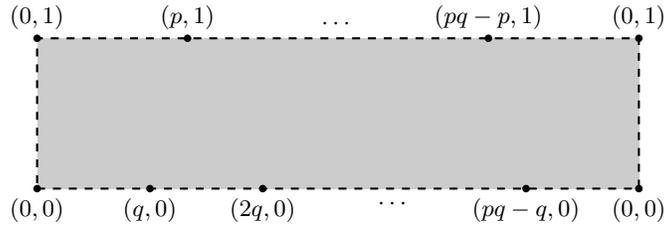

Now we define the universal cover $\mathbb{U} = (\mathbb{U},\pi_{pq})$ of the cylinder $Cyl_{p,q}$, with $\mathbb{U} = \{ (x,y) \in \mathbb{R} \mid 0 \leq y \leq 1 \}$ an infinite strip in the plane. The orientation is inherited from its embedding in $\mathbb{R}^2$. The covering map $\pi_{pq}: \mathbb{U} \rightarrow Cyl_{p,q}$ is induced from wrapping $\mathbb{U}$ around $Cyl_{p,q}$. For $(x,y) \in \mathbb{U}$, we have:
\[
\pi_{pq}(x,y) = (x \modm pq, y).
\]

The marked points on the lower boundary of $\mathbb{U}$ are $\{ (qx,0) \mid x \in \mathbb{Z}\}$, and the marked points on the upper  boundary are $\{(px,1) \mid x \in \mathbb{Z}\}$.

We denote the points on the lower boundary by $i_\partial$, so for $0 \leq i \leq p-1$:
\[
0_\partial := (0,0) = (pq,0), \,\,\,\, 1_\partial := (q,0), \,\,\,\, \ldots \,\,\,\,, (p-1)_\partial := (pq-q,0),
\]
and the points on the upper boundary are denoted by $j_{\partial'}$, so for $0 \leq j \leq q-1$:
\[
0_{\partial'} := (pq,1) = (0,1), \,\,\,\, 1_{\partial'} := (pq-p,1), \,\,\,\, \ldots \,\,\,\,, (p-1)_{\partial'} := (p,1).
\]
Figure \ref{ann_cyl_note} illustrates the cylinder $Cyl_{p,q}$.  

\begin{figure}[h!]
	\begin{center}
	\begin{tikzpicture}[scale = .5]
		\tikzstyle{every node} = [font = \small]
		\foreach \x in {0}
		{
			\foreach \y in {-8}
			{
				\filldraw[fill=black!20!white, draw=black,dashed,thick] (\x-8,\y+2) rectangle (\x+8,\y-2);

				\foreach \t in {-8,-4,4,8}
				{
					\fill (\x+\t,\y+2) circle (.1);
				}
				
				\foreach \t in {-8,-5,-2,5,8}
				{
					\fill (\x+\t,\y-2) circle (.1);
				}

				\fill (\x-8,\y+2) node [above] {$0_{\partial'}$};
				\fill (\x-4,\y+2) node [above] {$(q-1)_{\partial'}$};
				\fill (\x-0,\y+2) node [above] {$\cdots$};
				\fill (\x+4,\y+2) node [above] {$1_{\partial'}$};
				\fill (\x+8,\y+2) node [above] {$0_{\partial'}$};

				\fill (\x-8,\y-2) node [below] {$0_{\partial}$};
				\fill (\x-5,\y-2) node [below] {$1_{\partial}$};
				\fill (\x-2,\y-2) node [below] {$2_{\partial}$};
				\fill (\x+1.5,\y-2) node [below] {$\cdots$};
				\fill (\x+5,\y-2) node [below] {$(p-1)_{\partial}$};
				\fill (\x+8,\y-2) node [below] {$0_{\partial}$};
			}
		}
		\end{tikzpicture}
	\end{center}
\caption{Annulus as a cylinder with additional notation.}
\label{ann_cyl_note}
\end{figure}
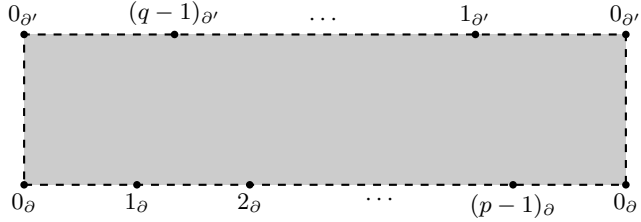

For clarity when working with the Coxeter transformation, we use integers to denote the marked points on the upper and lower boundary as follows:
\begin{align*}
\{i_\partial &= (iq, 0) \mid i \in \mathbb{Z}\},\\ 
\{j_{\partial'} &= (-jp, 1) \mid j \in \mathbb{Z}\},
\end{align*}
where the subscripts $\partial$ and $\partial'$ indicate on which boundary the point lies. This labeling is in the universal cover $\mathbb{U}$.

We refer to a lift of $Cyl_{p,q}$ in the universal cover as a \emph{frame}. Figure \ref{univ_cover} shows an example of the universal cover for $p = 3, q = 2$. The translates of such a frame cover $\mathbb{U}$. Figure \ref{univ_cover} shows $3$ frames.  

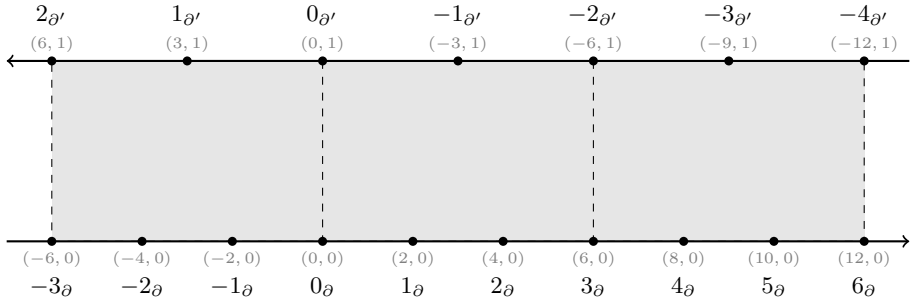
\begin{figure}[h!]
	\begin{center}
	\begin{tikzpicture}[scale = .6]
		\tikzstyle{every node} = [font = \small]
		\foreach \x in {0}
		{
			\foreach \y in {-8}
			{
				\filldraw[fill=black!10!white, draw=black,dashed] (\x-9,\y+2) rectangle (\x+9,\y-2);
				\draw[black,thick,<-] (\x-10,\y+2) -- (\x+10,\y+2);
				\draw[black,thick,->] (\x-10,\y-2) -- (\x+10,\y-2);
				\draw[dashed] (\x-3,\y+2) -- (\x-3,\y-2);
				\draw[dashed] (\x+3,\y+2) -- (\x+3,\y-2);

				\foreach \t in {-9,-7,-5,-3,-1,1,3,5,7,9}
				{
					\fill (\x+\t,\y-2) circle (.1);
				}
				
				\foreach \t in {-9,-6,-3,0,3,6,9}
				{
					\fill (\x+\t,\y+2) circle (.1);
				}

				\fill (\x-9,\y+2) node [above,gray] {\tiny{$(6,1)$}};
				\fill (\x-6,\y+2) node [above,gray] {\tiny{$(3,1)$}};
				\fill (\x-3,\y+2) node [above,gray] {\tiny{$(0,1)$}};
				\fill (\x+0,\y+2) node [above,gray] {\tiny{$(-3,1)$}};
				\fill (\x+3,\y+2) node [above,gray] {\tiny{$(-6,1)$}};
				\fill (\x+6,\y+2) node [above,gray] {\tiny{$(-9,1)$}};
				\fill (\x+9,\y+2) node [above,gray] {\tiny{$(-12,1)$}};
				\fill (\x-9,\y+2.6) node [above] {$2_{\partial'}$};
				\fill (\x-6,\y+2.6) node [above] {$1_{\partial'}$};
				\fill (\x-3,\y+2.6) node [above] {$0_{\partial'}$};
				\fill (\x+0,\y+2.6) node [above] {$-1_{\partial'}$};
				\fill (\x+3,\y+2.6) node [above] {$-2_{\partial'}$};
				\fill (\x+6,\y+2.6) node [above] {$-3_{\partial'}$};
				\fill (\x+9,\y+2.6) node [above] {$-4_{\partial'}$};

				\fill (\x-9,\y-2) node [below,gray] {\tiny{$(-6,0)$}};
				\fill (\x-7,\y-2) node [below,gray] {\tiny{$(-4,0)$}};
				\fill (\x-5,\y-2) node [below,gray] {\tiny{$(-2,0)$}};
				\fill (\x-3,\y-2) node [below,gray] {\tiny{$(0,0)$}};
				\fill (\x-1,\y-2) node [below,gray] {\tiny{$(2,0)$}};
				\fill (\x+1,\y-2) node [below,gray] {\tiny{$(4,0)$}};
				\fill (\x+3,\y-2) node [below,gray] {\tiny{$(6,0)$}};
				\fill (\x+5,\y-2) node [below,gray] {\tiny{$(8,0)$}};
				\fill (\x+7,\y-2) node [below,gray] {\tiny{$(10,0)$}};
				\fill (\x+9,\y-2) node [below,gray] {\tiny{$(12,0)$}};
				\fill (\x-9,\y-2.6) node [below] {$-3_{\partial}$};
				\fill (\x-7,\y-2.6) node [below] {$-2_{\partial}$};
				\fill (\x-5,\y-2.6) node [below] {$-1_{\partial}$};
				\fill (\x-3,\y-2.6) node [below] {$0_{\partial}$};
				\fill (\x-1,\y-2.6) node [below] {$1_{\partial}$};
				\fill (\x+1,\y-2.6) node [below] {$2_{\partial}$};
				\fill (\x+3,\y-2.6) node [below] {$3_{\partial}$};
				\fill (\x+5,\y-2.6) node [below] {$4_{\partial}$};
				\fill (\x+7,\y-2.6) node [below] {$5_{\partial}$};
				\fill (\x+9,\y-2.6) node [below] {$6_{\partial}$};
			}
		}
		\end{tikzpicture}
	\end{center}
\caption{Universal cover of the annulus $C_{3,2}$.}
\label{univ_cover}
\end{figure}

Throughout this paper, we will use the notation that is the most convenient in the context.

\subsection{Quivers}

A \emph{quiver $Q = (Q_0, Q_1)$} is an oriented graph. $Q_0$ denotes the set of vertices of $Q$, and $Q_1$ denotes the set of arrows between vertices. The right-hand side of Figure \ref{quiver_of_T} gives an example of a quiver. Given a triangulation $T$ of a surface $S$, we can associate a quiver to $T$.

\begin{definition}
The quiver $Q_T$ associated to a triangulation $T$ is obtained as follows:
\begin{enumerate}
\item The vertices of $Q_T$ correspond to the arcs in $T$, with vertex $i$ corresponding to the arc $d_i$,
\item There is an arrow from $i$ to $j$ in $Q_T$ if $d_i$ and $d_j$ in $T$ bound a common triangle, and $d_j$ is a clockwise rotation of $d_i$.
\end{enumerate}
\label{def:QT}
\end{definition}

For two arcs to bound a common triangle, they must have a common endpoint. Let  $d_i = [a,b]$ and $d_j = [b,c]$ be two arcs in a triangulation $T$ of $S$. Then $d_j$ is a clockwise rotation of $d_i$ if the endpoint of $d_i$ at the marked point $a$ can be rotated clockwise to the marked point $c$ of $S$ such that the new arc $d_i' = [b,c]$ is isotopic to $d_j$. An example of a quiver associated to a triangulation is illustrated in Figure \ref{quiver_of_T}.

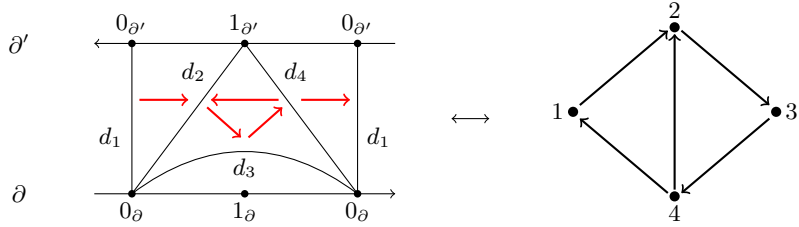
\begin{figure}[h!]
\centering
\subfigure{
\begin{tikzpicture}[scale = .5]
		\tikzstyle{every node} = [font = \small]
		\foreach \x in {0}
		{
			\foreach \y in {-8}
			{
				\draw[<-] (\x-4,\y+4) -- (\x+4,\y+4);
				\fill (\x-6,\y+4) node {$\partial'$};
				\draw[->] (\x-4,\y-0) -- (\x+4,\y-0);
				\fill (\x-6,\y-0) node {$\partial$};

				\foreach \t in {-3,0,3}
				{
					\fill (\x+\t,\y+4) circle (.1);
					\fill (\x+\t,\y-0) circle (.1);
				}
				
				\fill (\x-3,\y+4) node [above] {$0_{\partial'}$};
				\fill (\x-0,\y+4) node [above] {$1_{\partial'}$};
				\fill (\x+3,\y+4) node [above] {$0_{\partial'}$};

				\fill (\x-3,\y-0) node [below] {$0_{\partial}$};
				\fill (\x-0,\y-0) node [below] {$1_{\partial}$};
				\fill (\x+3,\y-0) node [below] {$0_{\partial}$};

				\fill[] (\x-3,\y+1.5) node [left] {$d_1$};
				\fill[] (\x-.8,\y+3.25) node [left] {$d_2$};
				\fill[] (\x-0,\y+1.2) node [below] {$d_3$};
				\fill[] (\x+.8,\y+3.25) node [right] {$d_4$};
				\fill[] (\x+3,\y+1.5) node [right] {$d_1$};

				\draw[] (\x-3,\y+4) -- (\x-3,\y-0);
				\draw[] (\x-0,\y+4) -- (\x-3,\y-0);
				\draw[] (\x,\y+4) -- (\x+3,\y-0);
				\draw[] (\x+3,\y+4) -- (\x+3,\y-0);				
				\draw (\x-3,\y-0) .. controls (\x-1,\y+1.5) and (\x+1,\y+1.5) .. (\x+3,\y-0);
				
				\draw [->,thick,red] (\x-2.8,\y+2.5)--(\x-1.5,\y+2.5);
				\draw [<-,thick,red] (\x+2.8,\y+2.5)--(\x+1.5,\y+2.5);
				
				\draw [<-,thick,red] (\x-.9,\y+2.5)--(\x+.9,\y+2.5);
				\draw [->,thick,red] (\x+.1,\y+1.5)--(\x+1,\y+2.3);
				\draw [<-,thick,red] (\x-.1,\y+1.5)--(\x-1,\y+2.3);
			}
			
			\draw[<->] (5.5,-6) -- (6.5,-6);
		}
		\end{tikzpicture}
}
	\quad
	\subfigure{
		\begin{tikzpicture}[scale = .45]
		\tikzstyle{every node} = [font = \small]
		\foreach \x in {0}
		{
			\foreach \y in {-8}
			{
			\fill (\x-3,\y+17) circle (.15);
			\fill(\x,\y+14.5) circle (.15);
			\fill(\x,\y+19.5) circle (.15);
			\fill(\x+3,\y+17) circle (.15);
			
			\fill[] (\x-3,\y+17) node [left] {$1$};
			\fill[] (\x,\y+19.5) node [above] {$2$};
			\fill[] (\x+3,\y+17) node [right] {$3$};
			\fill[] (\x,\y+14.5) node [below] {$4$};
		
			\draw [->,thick] (\x-2.8,\y+17.2)--(\x-.2,\y+19.4); 
			\draw [<-,thick] (\x-2.8,\y+16.8) -- (\x-.2,\y+14.6); 
			\draw [<-,thick] (\x+0.2,\y+14.6)--(\x+2.8,\y+16.8); 
			\draw [->,thick] (\x+0.2,\y+19.4) -- (\x+2.8,\y+17.2); 
			\draw [->,thick] (\x-0,\y+14.7) -- (\x-0,\y+19.3);

		}
		}
	\end{tikzpicture}
	}
	\caption{Triangulation $T$ and associated quiver $Q_T$.}
	\label{quiver_of_T}
	\end{figure}

Recall that we can perform flips of arcs in a triangulation. On the level of quivers, there exists a procedure called mutation of vertices. 

\begin{definition}
Let $Q$ be a quiver without loops or two-cycles. The mutation of a vertex $k \in Q_0$ is defined as follows:
\begin{enumerate}
\item For all paths of the form $i \overset{a}{\longrightarrow} k \overset{b}{\longrightarrow} j$, where $a,b$ denote the multiplicity of the arrows, add arrow $i \overset{ab}{\longrightarrow} j$ to $Q$.
\item Reverse all arrows incident with $k$.
\item Cancel a maximal number of two cycles created in $(1)$.
\end{enumerate}
\label{def:mutation}
\end{definition}

Mutation of a vertex $k$ will be denoted by $\sigma_k$.

\begin{example}
Let $Q$ be the following quiver, and consider mutation at vertex $3$. The blue arrow $4 \rightarrow 2$ is the arrow added in step (1) of Def. \ref{def:mutation}, since there is a path $4 \rightarrow 3 \rightarrow 2$. We then reverse the arrows incident to the vertex $3$ (Def. \ref{def:mutation}, step (2)). We cancel out the newly arising two-cycle (Def \ref{def:mutation}, step (3)):
\clearpage
\begin{figure}[h!]
\centering
\subfigure{
		\begin{tikzpicture}[scale = .45]
		\tikzstyle{every node} = [font = \small]
		\foreach \x in {0}
		{
			\foreach \y in {-8}
			{
			\fill (\x-3,\y+17) circle (.15);
			\fill(\x,\y+14.5) circle (.15);
			\fill(\x,\y+19.5) circle (.15);
			\fill(\x+3,\y+17) circle (.15);
			
			\fill[] (\x-3,\y+17) node [left] {$1$};
			\fill[] (\x,\y+19.5) node [above] {$2$};
			\fill[] (\x+3,\y+17) node [right] {$3$};
			\fill[] (\x,\y+14.5) node [below] {$4$};
		
			\draw [->,thick] (\x-2.8,\y+17.2)--(\x-.2,\y+19.4); 
			\draw [->,thick] (\x+0.2,\y+14.6)--(\x+2.8,\y+16.8); 
			\draw [<-,thick] (\x+0.2,\y+19.4) -- (\x+2.8,\y+17.2);
			\draw [<-,thick] (\x-0,\y+14.7) -- (\x-0,\y+19.3);

		\node[] at (\x+6,\y+17.2) {$\overset{(1)}{\longrightarrow}$};
		}
		}
	\end{tikzpicture}
	}
	\quad
	\subfigure{
		\begin{tikzpicture}[scale = .45]
		\tikzstyle{every node} = [font = \small]
		\foreach \x in {0}
		{
			\foreach \y in {-8}
			{
			
			\fill (\x-3,\y+17) circle (.15);
			\fill(\x,\y+14.5) circle (.15);
			\fill(\x,\y+19.5) circle (.15);
			\fill(\x+3,\y+17) circle (.15);
			
			\fill[] (\x-3,\y+17) node [left] {$1$};
			\fill[] (\x,\y+19.5) node [above] {$2$};
			\fill[] (\x+3,\y+17) node [right] {$3$};
			\fill[] (\x,\y+14.5) node [below] {$4$};
		
			\draw [->,thick] (\x-2.8,\y+17.2)--(\x-.2,\y+19.4); 
			\draw [->,thick] (\x+0.2,\y+14.6)--(\x+2.8,\y+16.8); 
			\draw [<-,thick] (\x+0.2,\y+19.4) -- (\x+2.8,\y+17.2);
			\draw [<-,thick] (\x-0.1,\y+14.7) -- (\x-0.1,\y+19.3);
			\draw [->,thick,blue] (\x+0.1,\y+14.7) -- (\x+0.1,\y+19.3);
			
			\node[] at (\x+6,\y+17.2)  {$\overset{(2)}{\longrightarrow}$};

		}
		
		}
	\end{tikzpicture}
}
	\quad
	\subfigure{
		\begin{tikzpicture}[scale = .45]
		\tikzstyle{every node} = [font = \small]
		\foreach \x in {0}
		{
			\foreach \y in {-8}
			{
			
						\fill (\x-3,\y+17) circle (.15);
			\fill(\x,\y+14.5) circle (.15);
			\fill(\x,\y+19.5) circle (.15);
			\fill(\x+3,\y+17) circle (.15);
			
			\fill[] (\x-3,\y+17) node [left] {$1$};
			\fill[] (\x,\y+19.5) node [above] {$2$};
			\fill[] (\x+3,\y+17) node [right] {$3$};
			\fill[] (\x,\y+14.5) node [below] {$4$};
		
			\draw [->,thick] (\x-2.8,\y+17.2)--(\x-.2,\y+19.4); 
			\draw [<-,thick, green] (\x+0.2,\y+14.6)--(\x+2.8,\y+16.8); 
			\draw [->,green, thick] (\x+0.2,\y+19.4) -- (\x+2.8,\y+17.2);
			\draw [<-,thick] (\x-0.1,\y+14.7) -- (\x-0.1,\y+19.3);
			\draw [->,thick, blue] (\x+0.1,\y+14.7) -- (\x+0.1,\y+19.3);
			
		\node[] at (\x+6,\y+17.2)  {$\overset{(3)}{\longrightarrow}$};
		}
		}
	\end{tikzpicture}
	}
	\quad
		\subfigure{
		\begin{tikzpicture}[scale = .45]
		\tikzstyle{every node} = [font = \small]
		\foreach \x in {0}
		{
			\foreach \y in {-8}
			{
			
			\fill (\x-3,\y+17) circle (.15);
			\fill(\x,\y+14.5) circle (.15);
			\fill(\x,\y+19.5) circle (.15);
			\fill(\x+3,\y+17) circle (.15);
			
			\fill[] (\x-3,\y+17) node [left] {$1$};
			\fill[] (\x,\y+19.5) node [above] {$2$};
			\fill[] (\x+3,\y+17) node [right] {$3'$};
			\fill[] (\x,\y+14.5) node [below] {$4$};
		
			\draw [->,thick] (\x-2.8,\y+17.2)--(\x-.2,\y+19.4); 
			\draw [<-,thick,green] (\x+0.2,\y+14.6)--(\x+2.8,\y+16.8); 
			\draw [->, green,thick] (\x+0.2,\y+19.4) -- (\x+2.8,\y+17.2);
			\draw [<-,thick] (\x-0.1,\y+14.7) -- (\x-0.1,\y+19.3);
			\draw [->, blue,thick] (\x+0.1,\y+14.7) -- (\x+0.1,\y+19.3);
			\draw[red,thick] (\x-.4,\y+16.5) -- (\x+.4,\y+17.3);

			\node[] at (\x+6,\y+17.2)  {$\overset{\sigma_3Q}{\Longrightarrow}$};
			}
		}
	\end{tikzpicture}
	}
	\quad
	\subfigure{
		\begin{tikzpicture}[scale = .45]
		\tikzstyle{every node} = [font = \small]
		\foreach \x in {0}
		{
			\foreach \y in {-8}
			{
			\fill (\x-3,\y+17) circle (.15);
			\fill(\x,\y+14.5) circle (.15);
			\fill(\x,\y+19.5) circle (.15);
			\fill(\x+3,\y+17) circle (.15);
			
			\fill[] (\x-3,\y+17) node [left] {$1$};
			\fill[] (\x,\y+19.5) node [above] {$2$};
			\fill[] (\x+3,\y+17) node [right] {$3'$};
			\fill[] (\x,\y+14.5) node [below] {$4$};
		
			\draw [->,thick] (\x-2.8,\y+17.2)--(\x-.2,\y+19.4); 
			\draw [<-,thick] (\x+0.2,\y+14.6)--(\x+2.8,\y+16.8); 
			\draw [->,thick] (\x+0.2,\y+19.4) -- (\x+2.8,\y+17.2);

			}
		}
		\end{tikzpicture}
		}
	\end{figure}
	
\end{example}

The flips of arcs in a triangulation $T$ and mutations of the associated quiver $Q_T$ correspond to each other. 

\section{Asymptotic triangulations}

In this section, we recall \emph{asymptotic triangulations}, which are defined by the presence of \emph{strictly asymptotic arcs}, and were first defined by Baur and Dupont in \cite{BD}. We will first define strictly asymptotic arcs, asymptotic triangulations, and flips of asymptotic arcs. To any asymptotic triangulation we can associate a quiver as in Definition \ref{def:QT}. Such a quiver may have loops and 2-cycles, and hence classical quiver mutation cannot be applied. In Subsection 3.1, we introduce a modified version of quivers of asymptotic triangulations in order to deal with this issue.

\medskip We denote by $z$ a non-contractible closed curve in the annulus.

\begin{definition}
Let $m$ be a marked point in $C_{p,q}$. Let $\pi_m$ be the isotopy class of the arc starting at $m$ and spiraling positively around the annulus. We call $\pi_m$ the \emph{Pr\"ufer arc} at $m$. Similarly, let $\alpha_m$ be the isotopy class of the arc starting at $m$ and spiraling negatively around the annulus. We call $\alpha_m$ the \emph{adic arc} at $m$ (cf. Figures \ref{fig:asymp_cpq} and \ref{fig:arcsCpq}).

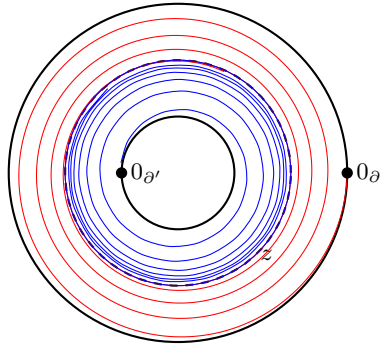
\begin{figure}[h!]
\centering
	\begin{tikzpicture}[scale = .75]
		\tikzstyle{every node} = [font = \small]
		\foreach \x in {0}
		{
			\draw[thick] (0,0) circle (3cm);
			\draw[thick] (0,0) circle (1cm);
			
			\fill (1.3,-1.45) node [right] {$z$};
			 \draw [thick,dashed] (0,0) circle [radius=2];
 			 \draw [red,domain=0:2.4,variable=\t,smooth,samples=400] plot ({-10*\t r}: {1+2*exp(-0.3*\t))});	
			\draw [red] (0,0) circle (2cm); 
			
			 \draw [blue,domain=2.4:12,variable=\t,smooth,samples=100] plot ({-4*\t r}: {(2-2*exp(-0.3*\t)))});	
			\draw [blue] (0,0) circle (2cm); 

			\fill (3,0) circle (.1);
			\fill (3,0) node [right] {$0_{\partial}$};
			\fill(-1,0) circle (.1);
			\fill (-1,0) node [right] {$0_{\partial'}$};

		}
	\end{tikzpicture}
	\caption{Adic arc $\pi_{0_\partial}$ in red, Pr\"ufer arc $\pi_{0_{\partial'}}$ in blue.}
	\label{fig:asymp_cpq}
\end{figure}
\end{definition}
	
\begin{figure}[h!]
	\begin{center}
	\begin{tikzpicture}[scale = .6]
		\tikzstyle{every node} = [font = \small]
		\foreach \x in {0}
		{
			\foreach \y in {-8}
			{
				\draw[<-] (\x-5,\y+4) -- (\x+5,\y+4);
				\fill (\x-6,\y+4) node {$\partial'$};
				\draw[->] (\x-5,\y-4) -- (\x+5,\y-4);
				\fill (\x-6,\y-4) node {$\partial$};

				\foreach \t in {-4,-2.5,-0.5,0.5,2.5,4}
				{
					\fill (\x+\t,\y+4) circle (.1);
					\fill (\x+\t,\y-4) circle (.1);
				}

				\fill (\x-4,\y+4) node [above] {$0_{\partial'}$};
				\fill (\x-2.25,\y+4) node [above] {$(q-1)_{\partial'}$};
				\fill (\x-.5,\y+4) node [above] {$\cdots$};
				\fill (\x+.5,\y+4) node [above] {$\cdots$};
				\fill (\x+2.5,\y+4) node [above] {$1_{\partial'}$};
				\fill (\x+4,\y+4) node [above] {$0_{\partial'}$};

				\fill (\x-4,\y-4) node [below] {$0_{\partial}$};
				\fill (\x-2.5,\y-4) node [below] {$1_{\partial}$};
				\fill (\x-.5,\y-4) node [below] {$\cdots$};
				\fill (\x+.5,\y-4) node [below] {$\cdots$};
				\fill (\x+2.5,\y-4) node [below] {$\tiny{(p-1)}_{\partial}$};
				\fill (\x+4,\y-4) node [below] {$0_{\partial}$};

				\hadic{\x+4}{\y+4}{1}{}
				\fill (\x,\y+1) node [above] {$\alpha_{0_{\partial'}}$};
				\hadic{\x-4}{\y+4}{9}{}

				\Prufer{\x-4}{\y-4}{9}{}
				\fill (\x,\y-1) node [below] {$\pi_{0_{\partial}}$};
				\Prufer{\x+4}{\y-4}{1}{}

				\draw[dashed,gray] (\x-4,\y+4) -- (\x-4,\y-4);
				\draw[dashed,gray] (\x+4,\y+4) -- (\x+4,\y-4);

				\draw (\x-2.5,\y+4) .. controls (\x-2,\y+3) and (\x+2,\y+3) .. (\x+2.5,\y+4);
				\draw (\x-2.5,\y-4) .. controls (\x-2,\y-3) and (\x+2,\y-3) .. (\x+2.5,\y-4);
			
			}
		}
		\end{tikzpicture}
	\end{center}
\caption{Asymptotic arcs in $C_{p,q}$.}
\label{fig:arcsCpq}
\end{figure}
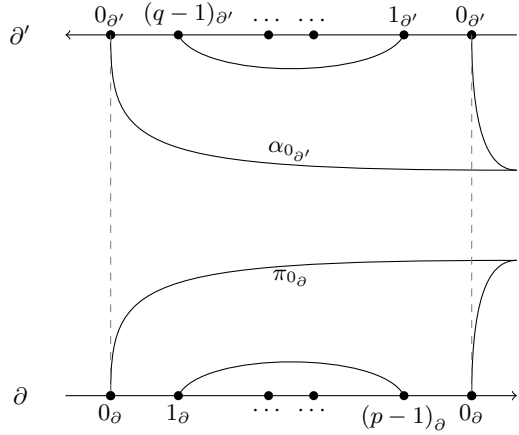

We call $\pi_m$ and $\alpha_m$ \emph{strictly asymptotic} arcs. We define the set of asymptotic arcs to be the union of the finite arcs and the strictly asymptotic arcs in a triangulation. Two arcs of $C_{p,q}$ are compatible if they do not intersect.

\begin{definition}
An \emph{asymptotic triangulation of the annulus} is a maximal collection of pairwise distinct and compatible asymptotic arcs, and contains strictly asymptotic arcs.
\end{definition}

Figure \ref{fig:asymp_cpq} shows two asymptotic arcs in the annulus, spiraling around $z$, and Figure \ref{fig:arcsCpq} shows examples of asymptotic arcs drawn in the cylinder $Cyl_{p,q}$.

\begin{definition}
Let $\beta \in \{\partial, \partial'\}$ be a boundary component. We say that an asymptotic arc is \emph{based at $\beta$} if it is either a peripheral arc with both endpoints on $\beta$, or it is a strictly asymptotic arc with its unique endpoint on $\beta$. A \emph{partial asymptotic triangulation} $T_\beta$ is the collection of arcs of an asymptotic triangulation based at the boundary component $\beta$. 
\end{definition}

The following result is from \cite{BD} (see article for proof).
 
\begin{lemma}
Let $T$ be a strictly asymptotic triangulation of $C_{p,q}$. Then $T$ contains at least two strictly asymptotic arcs, and there are two partial asymptotic triangulations $T_\partial, T_{\partial'}$ based at $\partial, \partial'$, respectively, such that $T = T_\partial \sqcup T_{\partial'}$.
\end{lemma}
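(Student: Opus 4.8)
The plan is to understand the structure of a strictly asymptotic triangulation $T$ by working in the universal cover $\mathbb{U}$ and exploiting the behaviour of the strictly asymptotic arcs near the non-contractible curve $z$. First I would record the key geometric fact: in $\mathbb{U}$, a Pr\"ufer arc at $m$ and an adic arc at $m'$ on the \emph{same} boundary component, if they spiral in opposite directions, must intersect; more precisely, the two directions of spiralling partition the strictly asymptotic arcs into two classes (``Pr\"ufer'' and ``adic''), and in a compatible collection all strictly asymptotic arcs spiral in the \emph{same} direction around $z$. This is the content I would extract from \cite{BD} or re-derive from Figures \ref{fig:asymp_cpq}--\ref{fig:arcsCpq}: once one strictly asymptotic arc is fixed, every other strictly asymptotic arc compatible with it spirals the same way, and two such arcs based at the same marked point coincide.

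Next I would show $T$ contains \emph{at least two} strictly asymptotic arcs. Suppose for contradiction that $T$ has exactly one strictly asymptotic arc $\gamma$, say based at $\partial$. Cutting the annulus along $\gamma$ (in $\mathbb{U}$, cutting along its lift) leaves a region that, up to the spiralling end, behaves like a disc with marked points, together with the inner boundary $\partial'$ which is now ``trapped'': there is no finite arc and no second strictly asymptotic arc that can separate $\partial'$ from the core, so the collection cannot be maximal --- one can still add a strictly asymptotic arc based at $\partial'$ (spiralling in the compatible direction), contradicting maximality. Hence $|T \cap \{\text{strictly asymptotic arcs}\}| \ge 2$.

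For the decomposition $T = T_\partial \sqcup T_{\partial'}$, I would argue that no bridging (finite) arc can occur in $T$ once a strictly asymptotic arc is present. A bridging arc $\delta = [i_\partial, j_{\partial'}]$ and a strictly asymptotic arc spiralling around $z$ necessarily cross: the strictly asymptotic arc wraps infinitely often around the core of the annulus, while $\delta$ separates the two boundary components, so any lift of the spiralling arc eventually crosses any lift of $\delta$. Therefore every arc of $T$ is either peripheral on $\partial$, peripheral on $\partial'$, or strictly asymptotic based at one of the two; assigning each arc to the boundary component it is ``based at'' (as in the definition preceding the lemma) gives the disjoint union $T = T_\partial \sqcup T_{\partial'}$, and by the first part both pieces are nonempty.

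The main obstacle I expect is the rigorous version of ``bridging arcs and spiralling arcs must cross'' and ``you can always add a compatible strictly asymptotic arc on the other boundary'': these are intuitively clear from the cylinder picture but require a careful intersection-number argument in $\mathbb{U}$ (counting how a lift of a finite arc meets the $\mathbb{Z}$-family of lifts of a spiralling arc, using that the spiralling arc's lift is asymptotic to a horizontal line while a bridging arc's lift has endpoints on both boundaries of the strip). Since the statement is quoted from \cite{BD}, I would either cite their proof for this step or include the intersection count as the one genuinely technical lemma, and treat the rest as bookkeeping about which boundary each arc is based at.
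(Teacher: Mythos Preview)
The paper does not give its own proof of this lemma: it states the result and refers the reader to \cite{BD} with ``(see article for proof)''. So there is nothing in the paper to compare your argument against line by line; I will simply assess your outline.

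Your core strategy is correct and is essentially the standard one: first show that a strictly asymptotic arc is incompatible with any bridging arc (so every arc of $T$ is based at one boundary, giving $T=T_\partial\sqcup T_{\partial'}$), then use maximality to force a strictly asymptotic arc on each boundary component. The intersection argument you sketch --- a bridging arc has lifts joining the two edges of the strip, while a strictly asymptotic arc has lifts asymptotic to the middle horizontal line, so the $\mathbb{Z}$-family of lifts of the latter meets any lift of the former --- is the right one and goes through without difficulty.

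Two small points. First, your preliminary claim that ``in a compatible collection all strictly asymptotic arcs spiral in the same direction around $z$'' is false as stated: this holds within each boundary component separately, but arcs based at $\partial$ and arcs based at $\partial'$ lie on opposite sides of $z$ and are compatible regardless of their spiralling directions (Figure~\ref{fig:arcsCpq} in fact shows a Pr\"ufer and an adic coexisting). Fortunately you do not actually use this claim anywhere, so just delete it. Second, your ``at least two'' argument is phrased as ruling out the case of exactly one strictly asymptotic arc, but the real content (and what you in fact need for ``both pieces are nonempty'') is that \emph{each} of $T_\partial$ and $T_{\partial'}$ contains a strictly asymptotic arc; otherwise the region between $z$ and the corresponding boundary is not maximally filled. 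Reorganising in that order --- no bridging arcs, hence the decomposition, hence by maximality a strictly asymptotic arc on each side --- makes the logic cleaner and removes the apparent gap.
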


Flips of asymptotic arcs are defined in the same way as flips of finite arcs, except we may consider quadrilaterals formed with strictly asymptotic arcs (cf. Fig. \ref{asymp_flip}). When there is only one strictly asymptotic arc in the partial asymptotic triangulation of a boundary component, based at a marked point $m$, then $\mu_{\pi_m} = \alpha_m$, and $\mu_{\alpha_m} = \pi_m$ (Fig. \ref{strict_asymp_flip}). This is because the strictly asymptotic arc $\alpha_m$ $(\pi_m)$ is the only arc compatible with $T \setminus \{\pi_m\}$ ($T \setminus \{\alpha_m\}$).

\begin{figure}[h!]
\begin{center}
\begin{tikzpicture}[scale = .4]
	\foreach \x in {0}
	{
		
		\draw (\x-3,0) -- (\x+4,0);
		\Prufer{\x-2}{0}{7}{gray}
		
		\draw[thick] (\x-2,0) .. controls (\x-1,1) and (\x+2,1) .. (\x+3,0);
		\fill (\x+1,0) circle (.1);
		
		\Prufer{\x+3}{0}{2}{gray}
		
		\fill (\x-2,0) node [below] {$i_\beta$};
		\fill (\x+1,0) node [below] {$m_\beta$};
		\fill (\x+3,0) node [below] {$j_\beta$};
		\draw[gray] (\x-2,0) .. controls (\x-1,.5) and (\x+0,.5) .. (\x+1,0);
		\draw[gray] (\x+1,0) .. controls (\x+1.5,.5) and (\x+2.5,.5) .. (\x+3,0);
	}
	
	\draw[<->] (8,1.5) -- (10,1.5);
	
	\foreach \x in {15}
	{
		\draw (\x-3,0) -- (\x+4,0);
		\Prufer{\x-2}{0}{6}{gray}
		\Prufer{\x+1}{0}{4}{thick}
		\Prufer{\x+3}{0}{2}{gray}
		\fill (\x-2,0) node [below] {$i_\beta$};
		\fill (\x+1,0) node [below] {$m_\beta$};
		\fill (\x+3,0) node [below] {$j_\beta$};
		\draw[gray] (\x-2,0) .. controls (\x-1,.5) and (\x+0,.5) .. (\x+1,0);
		\draw[gray] (\x+1,0) .. controls (\x+1.5,.5) and (\x+2.5,.5) .. (\x+3,0);
	}

\end{tikzpicture}
\end{center}
\caption{Flips of asymptotic arcs}
\label{asymp_flip}
\end{figure}

\begin{figure}[h!]
\begin{center}
\begin{tikzpicture}[scale = .4]
	\foreach \x in {0}
	{
		
		\draw (\x-3,0) -- (\x+4,0);
		\Prufer{\x-2}{0}{7}{thick}
		
		\draw[gray] (\x-2,0) .. controls (\x-1,1) and (\x+2,1) .. (\x+3,0);
		\fill (\x+1,0) circle (.1);
		
		\Prufer {\x+3}{0}{2}{thick}
		
		\fill (\x-2,0) node [below] {$m_\beta$};
		\fill (\x+1,0) node [below] {$i_\beta$};
		\fill (\x+3,0) node [below] {$m_\beta$};
		\draw[gray] (\x-2,0) .. controls (\x-1,.5) and (\x+0,.5) .. (\x+1,0);
		\draw[gray] (\x+1,0) .. controls (\x+1.5,.5) and (\x+2.5,.5) .. (\x+3,0);
	}
	
	\draw[<->] (7,1.5) -- (9,1.5);
	
	\foreach \x in {15}
	{
		\draw (\x-3,0) -- (\x+4,0);
		\adic{\x-2}{0}{2}{thick}
		\draw [gray] (\x-2,0) .. controls (\x-1,1) and (\x+2,1) .. (\x+3,0);
		\adic{\x+3}{0}{7}{thick}
		\fill (\x-2,0) node [below] {$m_\beta$};
		\fill (\x+1,0) node [below] {$i_\beta$};
		\fill (\x+3,0) node [below] {$m_\beta$};
		\draw[gray] (\x-2,0) .. controls (\x-1,.5) and (\x+0,.5) .. (\x+1,0);
		\draw[gray] (\x+1,0) .. controls (\x+1.5,.5) and (\x+2.5,.5) .. (\x+3,0);
	}

\end{tikzpicture}
\end{center}
\caption{Flips of asymptotic arcs}
\label{strict_asymp_flip}
\end{figure}

We can extend Definition \ref{def:bound} to the context of asymptotic arcs.
 
\begin{definition}
A \emph{bounding arc} $\gamma$ is a finite arc in an asymptotic triangulation such that the flipped arc $\mu_\gamma$ is a strictly asymptotic arc. 

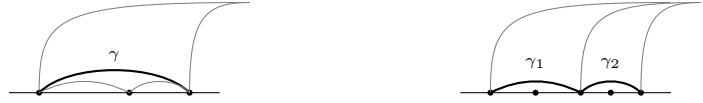
\begin{figure}[h!]
\begin{center}
\begin{tikzpicture}[scale = .4]
	\foreach \x in {0}
	{
		
		\draw (\x-3,0) -- (\x+4,0);
		\Prufer{\x-2}{0}{7}{gray}
		
		\draw[thick] (\x-2,0) .. controls (\x-1,1) and (\x+2,1) .. (\x+3,0);
		\fill (\x+1,0) circle (.1);
		\Prufer{\x+3}{0}{2}{gray}
		
		\fill (\x+.5,.75) node [above] {\tiny{$\gamma$}};
		\draw[gray] (\x-2,0) .. controls (\x-1,.5) and (\x+0,.5) .. (\x+1,0);
		\draw[gray] (\x+1,0) .. controls (\x+1.5,.5) and (\x+2.5,.5) .. (\x+3,0);
	}
	
		\foreach \x in {15}
	{

		\fill (\x-.5,0) circle (.1);
		\fill (\x+2,0) circle (.1);
		\draw (\x-3,0) -- (\x+4,0);
		\Prufer{\x-2}{0}{6}{gray}
		\Prufer{\x+1}{0}{4}{gray}
		\Prufer{\x+3}{0}{2}{gray}
		\fill (\x-.5,0.5) node [above] {\tiny{$\gamma_1$}};
		\fill (\x+2,0.5) node [above] {\tiny{$\gamma_2$}};
		\draw[thick] (\x-2,0) .. controls (\x-1,.5) and (\x+0,.5) .. (\x+1,0);
		\draw[thick] (\x+1,0) .. controls (\x+1.5,.5) and (\x+2.5,.5) .. (\x+3,0);
	}

\end{tikzpicture}
\end{center}
\caption{Bounding arcs $\gamma, \gamma_1, \gamma_2.$}
\label{bounding}
\end{figure}
\end{definition}

Figure \ref{bounding} shows three examples of bounding arcs. Bounding arcs are the finite arcs ``closest" to the asymptotic arcs in a triangulation. We call them bounding arcs because they separate all other non-bounding finite arcs from the strictly asymptotic arcs in the asymptotic triangulation.

Now just as in the finite case, given an asymptotic triangulation $T$, we can associate a quiver to $T$ (cf. Def. \ref{def:QT}).


\subsection{Quivers of asymptotic triangulations}
It is easy to see that the quiver associated to an asymptotic triangulation always has two connected components. It contains a quiver $Q_\partial$ corresponding to the triangulation based on the outer boundary $T_\partial$, and the quiver $Q_{\partial'}$ corresponding to the triangulation based on the inner boundary $T_{\partial'}$, and $Q_T =  Q_{\partial} \sqcup  Q_{\partial'}$.

Quiver mutation as in Definition \ref{def:mutation} only works for loop-free quivers without 2-cycles. If we associate quivers to asymptotic triangulations as in Definition \ref{def:QT} then loops and 2-cycles may appear. In order to define quiver mutation in this set-up, we need to modify the definition of a quiver associated to an asymptotic triangulation.

Recall that a frame of $T_\beta$ is one lift of $T_\beta$ in the universal cover. We start by choosing a frame of $T_\beta$ for $\beta \in \{\partial, \partial'\}$ with two copies of a strictly asymptotic arc as the end arcs of the frame. If we need to specify, we refer to this as a $d_i$ frame, with $d_i$ the \emph{framing arc}, and denote it by $T_\beta(d_i)$.

\begin{figure}[h!]
\begin{center}
\begin{tikzpicture}[scale = .4]
	\foreach \x in {0}
	{
		
		\draw (\x-3,0) -- (\x+4,0);
		\Prufer{\x-2}{0}{7}{thick}
		
		\Prufer {\x+3}{0}{2}{thick}
		\draw (\x-1.8,0.3)-- (\x+0,2.5);
		\draw (\x+1.2,0.3)-- (\x+3,2.5);
		\draw (\x-0.3,0.3)-- (\x+1.5,2.5);
		
		\fill (\x-2,1.25) node [left] {\tiny{$d_i$}};
		\fill (\x+3,1.25) node [right] {\tiny{$d_i$}};
	}

\end{tikzpicture}
\end{center}
\end{figure}

In the frame of $T_\beta$, each arc gives rise to a vertex in $Q_\beta$, and in particular, each copy of $d_i$ gives rise to separate vertex in $Q_\beta$. We denote the quiver corresponding to $T_\beta(d_i)$ by $Q_\beta(i)$.

\begin{figure}[h!]
\begin{center}
\begin{tikzpicture}[scale = .5]
	\foreach \x in {0}
	{
		\fill (\x+4,-0) circle (.1);
		\fill (\x+4,-0.2) node [below] {$i$};
		\fill(\x-3,-0) circle (.1); 
		\fill (\x-3,-0.2) node [below] {$i$};
		
		\draw[dashed,thick] (\x-2.5,0) -- (\x+3.5,0);
	}

\end{tikzpicture}
\end{center}
\end{figure}

We call these two $i$ vertices \emph{framing vertices}. These framing vertices do not get mutated during the quiver mutation. However, we don't consider them to be frozen because we allow arrows between framing vertices. If we want to mutate these vertices, we need to switch from our quiver $Q_\beta(i)$ to a new quiver $Q_\beta(j)$, for $d_j$ another strictly asymptotic arc in $T_\beta$. The corresponding operation in our triangulation is switching frames in the universal cover. We can go between a frame $T_\beta(d_i)$ and another frame $T_\beta(d_j)$ by shifting in one direction in our universal cover until we reach another strictly asymptotic arc $d_j$, which we now choose to be our framing arc. If there is no other strictly asymptotic arc in $T_\beta$, then we cannot switch frames, and therefore we cannot mutate the vertex $i \in Q_\beta(i)$. Recall from Figure \ref{strict_asymp_flip} that when we only have one strictly asymptotic arc $\gamma$, then a flip will only reverse the orientation of $\gamma$ without affecting the quiver. Since $\sigma_\gamma Q_\beta = Q_\beta$ for $\gamma$ the only strictly asymptotic arc in $T_\beta$, we can use this  definition of quiver mutation for a framing quiver. 

If there is another strictly asymptotic arc in the associated triangulation $T_\beta(d_i)$, then we can mutate our frozen vertices by modifying our quiver $Q_\beta(i)$. Let $d_j$ be another strictly asymptotic arc in $T_\beta$. Then we can move between $Q_\beta(i)$ and $Q_\beta(j)$ by identifying the $i$ vertices in $Q_\beta(i)$, and then break the quiver at vertex $j$ so that our quiver now has two $j$ vertices. All arrows remain the same.

\begin{example} 
Let $T$ be the following asymptotic triangulation of $C_{2,2}$:
	\begin{center}
	\begin{tikzpicture}[scale = .45]
		\tikzstyle{every node} = [font = \small]
		\foreach \x in {0}
		{
			\foreach \y in {-8}
			{
				\draw[<-] (\x-5,\y+4) -- (\x+5,\y+4);
				\fill (\x-6,\y+4) node {$\partial'$};
				\draw[->] (\x-5,\y-4) -- (\x+5,\y-4);
				\fill (\x-6,\y-4) node {$\partial$};

				\fill (\x-4,\y+4) circle (.1);
				\fill(\x,\y+4) circle (.1);
				\fill (\x-4,\y-4) circle (.1);
				\fill(\x,\y-4) circle (.1);

				\fill (\x-4,\y+4) node [above] {$0_{\partial'}$};
				\fill (\x,\y+4) node [above] {$1_{\partial'}$};
				\fill (\x+4,\y+4) node [above] {$0_{\partial'}$};

				\fill (\x-4,\y-4) node [below] {$0_{\partial}$};
				\fill (\x,\y-4) node [below] {$1_{\partial}$};
				\fill (\x+4,\y-4) node [below] {$0_{\partial}$};

				\hadic{\x+4}{\y+4}{2}{}
				\fill (\x-3,\y+2) node [above] {$d_{1}$};
				\hadic{\x-4}{\y+4}{10}{}
				\fill (\x+5,\y+2) node [above] {$d_{1}$};
				\hadic{\x}{\y+4}{4}{}
				\fill (\x+1,\y+2) node [above] {$d_{2}$};
				
				\Prufer{\x-4}{\y-4}{10}{}
				\fill (\x-3,\y-2) node [below] {$d_{3}$};
				\Prufer{\x+4}{\y-4}{1}{}
				\fill (\x+5,\y-2) node [below] {$d_{3}$};

				\draw[dashed,gray] (\x-4,\y+4) -- (\x-4,\y-4);
				\draw[dashed,gray] (\x+4,\y+4) -- (\x+4,\y-4);

				\draw (\x-4,\y-4) .. controls (\x-2,\y-3) and (\x+2,\y-3) .. (\x+4,\y-4);
				\fill (\x,\y-2.7) node [] {$d_4$};
			}
		}
	\end{tikzpicture}
	\end{center}
\medskip
Then the quivers $Q_{\partial'}$ and $Q_\partial$ are:

\begin{figure}[h!]
\begin{center}
	\begin{tikzpicture}[scale = .5]
		\tikzstyle{every node} = [font = \small]
		\foreach \x in {0}
		{
			\foreach \y in {-8}
			{
			\fill (\x-4,\y+2) circle (.15);
			\fill (\x-4,\y+1.8) node [below] {$1 \,\,$};
			\fill(\x,\y+2) circle (.15);
			\fill (\x,\y+1.8) node [below] {$\,\, 2 $};
			\fill(\x+4,\y+2) circle (.15);
			\fill (\x+4,\y+1.8) node [below] {$\,\, 1 $};

			\fill (\x-4,\y-3) circle (.15);
			\fill (\x-4,\y-3.2) node [below] {$3$};
			\fill(\x,\y-3) circle (.15);
			\fill (\x,\y-3.2) node [below] {$ 4 $};
			\fill(\x+4,\y-3) circle (.15);
			\fill (\x+4,\y-3.2) node [below] {$ 3 $};
			\fill(\x-8, \y+2) node [left] {\large{$Q_{\partial'}:$}};
			\fill(\x-8, \y-3) node [left] {\large{$Q_{\partial}:$}};
			
			\draw[->] (\x-3.5,\y+2) -- (\x-.5,\y+2);
			\draw[->] (\x+.5,\y+2) -- (\x+3.5,\y+2);
			
			\draw [->] (\x-3.5, \y-3) -- (\x-.5, \y-3);
			\draw [->] (\x+.5, \y-3) -- (\x+3.5, \y-3);
			\draw [<-] (\x-3.7,\y-2.6) .. controls (\x-1.5,\y-1.5) and (\x+1.5,\y-1.5) .. (\x+3.7,\y-2.6);

			}
		}
	\end{tikzpicture}
	\end{center}
\end{figure} 
\end{example}

Now we can perform the classical quiver mutation as per Definition \ref{def:mutation}.

\begin{example}

Let $T_\partial$ be the following asymptotic triangulation based on boundary component $\partial$, and let $Q_\partial$ be the quiver associated to $T_\partial$. Consider what happens when we flip the arc $d_2$ in $T$.

\begin{figure}[h!]
\centering
		\subfigure{
		\begin{tikzpicture}[scale = .4]
		\tikzstyle{every node} = [font = \small]
		\foreach \x in {0}
		{
			\foreach \y in {-8}
			{
				\draw[->] (\x-8,\y-3) -- (\x+2,\y-3);
				\fill (\x-9,\y-3) node {$\partial$};

				\foreach \t in {-6,-4,-2,0}
				{
					\fill (\x+\t,\y-3) circle (.1);
				}

				\fill (\x-6,\y-3) node [below] {$0_{\partial}$};
				\fill (\x-4,\y-3) node [below] {$1_{\partial}$};
				\fill (\x-2,\y-3) node [below] {$2_{\partial}$};
				\fill (\x+0,\y-3) node [below] {$0_{\partial}$};
				
				\Prufer{\x-6}{\y-3}{8}{}
				\fill (\x-6,\y-1.5) node [right] {\tiny{$d_1$}};
				\Prufer{\x-4}{\y-3}{6}{}
				\fill (\x-4,\y-1.5) node [right] {\tiny{$d_2$}};
				\Prufer{\x-2}{\y-3}{4}{}
				\fill (\x-2,\y-1.5) node [right] {\tiny{$d_3$}};
				\Prufer{\x-0}{\y-3}{2}{}
				\fill (\x-0,\y-1.5) node [right] {\tiny{$d_1$}};			
			
			\node[left] at (\x-8,\y+0) {\textbf{$T_\partial$}:};
			\node[] at (\x+6,\y-1.5) {$\overset{\mu_{2}}{\longrightarrow}$};
			}
			
		}	
		\end{tikzpicture}	
		}
		\quad
		\subfigure{
		\begin{tikzpicture}[scale = .4]
		\tikzstyle{every node} = [font = \small]
		\foreach \x in {0}
		{
			\foreach \y in {-8}
			{
				\draw[->] (\x-8,\y-3) -- (\x+2,\y-3);
				\fill (\x-9,\y-3) node {$\partial$};

				\foreach \t in {-6,-4,-2,0}
				{
					\fill (\x+\t,\y-3) circle (.1);
				}

				\fill (\x-6,\y-3) node [below] {$0_{\partial}$};
				\fill (\x-4,\y-3) node [below] {$1_{\partial}$};
				\fill (\x-2,\y-3) node [below] {$2_{\partial}$};
				\fill (\x+0,\y-3) node [below] {$0_{\partial}$};
				
				\Prufer{\x-6}{\y-3}{8}{}
				\fill (\x-6,\y-1.5) node [right] {\tiny{$d_1$}};
				\draw[] (\x-6,\y-3) .. controls (\x-5,\y-2) and (\x-3,\y-2) .. (\x-2,\y-3);
				\fill (\x-4,\y-2.4) node [above] {\tiny{$d_2'$}};
				\Prufer{\x-2}{\y-3}{4}{}
				\fill (\x-2,\y-1.5) node [right] {\tiny{$d_3$}};
				\Prufer{\x-0}{\y-3}{2}{}
				\fill (\x-0,\y-1.5) node [right] {\tiny{$d_1$}};			
			}
		}	
		\end{tikzpicture}		
		}
\end{figure}

Then the corresponding quiver mutation is:

\begin{figure}[h!]
\centering
\subfigure{
		\begin{tikzpicture}[scale = .35]
		\tikzstyle{every node} = [font = \small]
		\foreach \x in {0}
		{
			\foreach \y in {0}
			{
			\fill (\x-6, \y) circle (.15);
			\fill (\x-2,\y) circle (.15);
			\fill (\x+2,\y) circle (.15);
			\fill (\x+6,\y) circle (.15);
			
			\fill[] (\x-6,\y-.2) node [below] {$1$};
			\fill[] (\x-2,\y-.2) node [below] {$2$};
			\fill[] (\x+2,\y-.2) node [below] {$3$};
			\fill[] (\x+6,\y-.2) node [below] {$1$};
			
			\draw[<-, thick] (\x-5.7,\y) -- (\x-2.3,\y); 
			\draw[<-, thick] (\x-1.7,\y) -- (\x+1.7,\y); 
			\draw[<-, thick] (\x+2.3,\y) -- (\x+5.7,\y); 
				
			\node[] at (\x+10,\y) {$\overset{\sigma_{2}}{\longrightarrow}$};
			}
					}
	\end{tikzpicture}
	}
	\quad
	\subfigure{
		\begin{tikzpicture}[scale = .35]
		\tikzstyle{every node} = [font = \small]
		\foreach \x in {0}
		{
			\foreach \y in {0}
			{
			
			\fill (\x-6, \y) circle (.15);
			\fill (\x-2,\y) circle (.15);
			\fill (\x+2,\y) circle (.15);
			\fill (\x+6,\y) circle (.15);
			
			\fill[] (\x-6,\y-.2) node [below] {$1$};
			\fill[] (\x-2,\y-.2) node [below] {$2'$};
			\fill[] (\x+2,\y-.2) node [below] {$3$};
			\fill[] (\x+6,\y-.2) node [below] {$1$};
			
			\draw[->, thick] (\x-5.7,\y) -- (\x-2.3,\y); 
			\draw[->, thick] (\x-1.7,\y) -- (\x+1.7,\y); 
			\draw[<-, thick] (\x+2.3,\y) -- (\x+5.7,\y); 
			\draw[<-,thick] (\x-5.8, \y+.3) .. controls (\x-3,\y+2) and (\x-1,\y+2) .. (\x+1.8,\y+.3);
			
		}
		}
		
	\end{tikzpicture}
	}
	\end{figure}
	
	\medskip
	And we have the quiver $\sigma_2Q$. Note that if we were to identify the framing vertices, we would have a two-cycle between $1 \leftrightarrows 3$ that would be canceled using the classical definition of quiver mutation. However, by drawing the quiver with two framing vertices, we keep these arrows and our quiver is the quiver associated to $\mu_2T$.
	
	\medskip
	Now consider the flip $\mu_3$ and the corresponding quiver mutation $\sigma_3$. We denote by $\tilde{\sigma}_i$ the \emph{premutation at vertex i}, that is, the process of applying the first two steps of quiver mutation at vertex $i$ (before canceling 2-cycles):
	
\begin{figure}[h!]		
\subfigure{
		\begin{tikzpicture}[scale = .4]
		\tikzstyle{every node} = [font = \small]
		\foreach \x in {0}
		{
			\foreach \y in {-8}
			{
				\draw[->] (\x-8,\y-3) -- (\x+2,\y-3);
				\fill (\x-9,\y-3) node {$\partial$};

				\foreach \t in {-6,-4,-2,0}
				{
					\fill (\x+\t,\y-3) circle (.1);
				}

				\fill (\x-6,\y-3) node [below] {$0_{\partial}$};
				\fill (\x-4,\y-3) node [below] {$1_{\partial}$};
				\fill (\x-2,\y-3) node [below] {$2_{\partial}$};
				\fill (\x+0,\y-3) node [below] {$0_{\partial}$};
				
				\Prufer{\x-6}{\y-3}{8}{}
				\fill (\x-6,\y-1.5) node [right] {\tiny{$d_1$}};
				\draw[] (\x-6,\y-3) .. controls (\x-5,\y-2) and (\x-3,\y-2) .. (\x-2,\y-3);
				\fill (\x-4,\y-2.4) node [above] {\tiny{$d_2'$}};
				\Prufer{\x-2}{\y-3}{4}{}
				\fill (\x-2,\y-1.5) node [right] {\tiny{$d_3$}};
				\Prufer{\x-0}{\y-3}{2}{}
				\fill (\x-0,\y-1.5) node [right] {\tiny{$d_1$}};			
			
			\node[] at (\x+6,\y-1.5) {$\overset{\mu_{3}}{\longrightarrow}$};
			}
		}	
		\end{tikzpicture}		
		}
		\quad
		\subfigure{
		\begin{tikzpicture}[scale = .4]
		\tikzstyle{every node} = [font = \small]
		\foreach \x in {0}
		{
			\foreach \y in {-8}
			{
				\draw[->] (\x-8,\y-3) -- (\x+2,\y-3);
				\fill (\x-9,\y-3) node {$\partial$};

				\foreach \t in {-6,-4,-2,0}
				{
					\fill (\x+\t,\y-3) circle (.1);
				}

				\fill (\x-6,\y-3) node [below] {$0_{\partial}$};
				\fill (\x-4,\y-3) node [below] {$1_{\partial}$};
				\fill (\x-2,\y-3) node [below] {$2_{\partial}$};
				\fill (\x+0,\y-3) node [below] {$0_{\partial}$};
				
				\Prufer{\x-6}{\y-3}{8}{}
				\fill (\x-6,\y-1.5) node [right] {\tiny{$d_1$}};
				\draw[] (\x-6,\y-3) .. controls (\x-5,\y-2) and (\x-3,\y-2) .. (\x-2,\y-3);
				\fill (\x-3.85,\y-1.95) node [below] {\tiny{$d_2'$}};
				\draw[] (\x-6,\y-3) .. controls (\x-4.75,\y-1.5) and (\x-2,\y-1.5) .. (\x-0,\y-3);
				\fill (\x-2,\y-1.5) node [right] {\tiny{$d_3'$}};
				\Prufer{\x-0}{\y-3}{2}{}
				\fill (\x-0,\y-1.5) node [right] {\tiny{$d_1$}};			
			}
		}	
		\end{tikzpicture}		
		}
\end{figure}

\begin{figure}[h!]
\centering
	\subfigure{
		\begin{tikzpicture}[scale = .35]
		\tikzstyle{every node} = [font = \small]
		\foreach \x in {0}
		{
		
			\foreach \y in {0}
			{
			
			\fill (\x-6, \y) circle (.15);
			\fill (\x-2,\y) circle (.15);
			\fill (\x+2,\y) circle (.15);
			\fill (\x+6,\y) circle (.15);
			
			\fill[] (\x-6,\y-.2) node [below] {$1$};
			\fill[] (\x-2,\y-.2) node [below] {$2'$};
			\fill[] (\x+2,\y-.2) node [below] {$3$};
			\fill[] (\x+6,\y-.2) node [below] {$1$};
			
			\draw[->, thick] (\x-5.7,\y) -- (\x-2.3,\y); 
			\draw[->, thick] (\x-1.7,\y) -- (\x+1.7,\y); 
			\draw[<-, thick] (\x+2.3,\y) -- (\x+5.7,\y); 
			\draw[<-,thick] (\x-5.8, \y+.3) .. controls (\x-3,\y+2) and (\x-1,\y+2) .. (\x+1.8,\y+.3);

		\node[] at (\x+10,\y-.05) {$\overset{\tilde{\sigma}_{3}}{\longrightarrow}$};
		}
		
		}
	\end{tikzpicture}
	}
		\quad
	\subfigure{
		\begin{tikzpicture}[scale = .35]
		\tikzstyle{every node} = [font = \small]
		\foreach \x in {0}
		{
			\foreach \y in {0}
			{
			
			\fill (\x-6, \y) circle (.15);
			\fill (\x-2,\y) circle (.15);
			\fill (\x+2,\y) circle (.15);
			\fill (\x+6,\y) circle (.15);
			
			\fill[] (\x-6,\y-.2) node [below] {$1$};
			\fill[] (\x-2,\y-.2) node [below] {$2'$};
			\fill[] (\x+2,\y-.2) node [below] {$3'$};
			\fill[] (\x+6,\y-.2) node [below] {$1$};
			
			\draw[->, thick] (\x-5.6,\y+.2) -- (\x-2.4,\y+.2); 
			\draw[<-, thick] (\x-1.7,\y) -- (\x+1.7,\y); 
			\draw[<-, thick] (\x-5.6,\y-.2) -- (\x-2.4,\y-.2); 
			\draw[->, thick] (\x+2.3,\y) -- (\x+5.7,\y); 
			\draw[->,thick] (\x-5.7, \y+.3) .. controls (\x-3,\y+2) and (\x-1,\y+2) .. (\x+1.8,\y+.3);
			\draw[<-,thick] (\x-6, \y+.4) .. controls (\x-3,\y+3) and (\x+2,\y+3) .. (\x+5.9,\y+.4);

		\node[] at (\x+10,\y) {$\overset{\sigma_{3}}{\longrightarrow}$};
		}		
		}
	\end{tikzpicture}
	}	
	\quad
	\subfigure{
		\begin{tikzpicture}[scale = .35]
		\tikzstyle{every node} = [font = \small]
		\foreach \x in {0}
		{
			\foreach \y in {0}
			{
			
			\fill (\x-6, \y) circle (.15);
			\fill (\x-2,\y) circle (.15);
			\fill (\x+2,\y) circle (.15);
			\fill (\x+6,\y) circle (.15);
			
			\fill[] (\x-6,\y-.2) node [below] {$1$};
			\fill[] (\x-2,\y-.2) node [below] {$2'$};
			\fill[] (\x+2,\y-.2) node [below] {$3'$};
			\fill[] (\x+6,\y-.2) node [below] {$1$};
			
			\draw[<-, thick] (\x-1.7,\y) -- (\x+1.7,\y); 
			\draw[->, thick] (\x+2.3,\y) -- (\x+5.7,\y); 
			\draw[->,thick] (\x-5.7, \y+.3) .. controls (\x-3,\y+2) and (\x-1,\y+2) .. (\x+1.8,\y+.3);
			\draw[<-,thick] (\x-6, \y+.4) .. controls (\x-3,\y+3) and (\x+2,\y+3) .. (\x+5.9,\y+.4);
		}	
		}
	\end{tikzpicture}
	}	
\end{figure}
	Our resulting quiver is $\sigma_3 \sigma_2 Q$.
	
	\medskip
	If we flip $d_3'$ in $T$, we get the previous triangulation back. The quiver mutation rules should also give us the previous quiver $\sigma_2Q$ back:

	\begin{figure}[h!]
	\centering
	\subfigure{
		\begin{tikzpicture}[scale = .35]
		\tikzstyle{every node} = [font = \small]
		\foreach \x in {0}
		{
		
			\foreach \y in {0}
			{
			
			\fill (\x-6, \y) circle (.15);
			\fill (\x-2,\y) circle (.15);
			\fill (\x+2,\y) circle (.15);
			\fill (\x+6,\y) circle (.15);
			
			\fill[] (\x-6,\y-.2) node [below] {$1$};
			\fill[] (\x-2,\y-.2) node [below] {$2'$};
			\fill[] (\x+2,\y-.2) node [below] {$3''$};
			\fill[] (\x+6,\y-.2) node [below] {$1$};
			
			\draw[<-, thick] (\x-1.7,\y) -- (\x+1.7,\y); 
			\draw[->, thick] (\x+2.3,\y) -- (\x+5.7,\y); 
			\draw[->,thick] (\x-5.7, \y+.3) .. controls (\x-3,\y+2) and (\x-1,\y+2) .. (\x+1.8,\y+.3);
			\draw[<-,thick] (\x-6, \y+.4) .. controls (\x-3,\y+3) and (\x+2,\y+3) .. (\x+5.9,\y+.4);

		\node[] at (\x+10,\y) {$\overset{\tilde{\sigma}_{3'}}{\longrightarrow}$};
		}
		
		}
	\end{tikzpicture}
	}	
	\quad
		\subfigure{
		\begin{tikzpicture}[scale = .35]
		\tikzstyle{every node} = [font = \small]
		\foreach \x in {0}
		{
			\foreach \y in {0}
			{
			
			\fill (\x-6, \y) circle (.15);
			\fill (\x-2,\y) circle (.15);
			\fill (\x+2,\y) circle (.15);
			\fill (\x+6,\y) circle (.15);
			
			\fill[] (\x-6,\y-.2) node [below] {$1$};
			\fill[] (\x-2,\y-.2) node [below] {$2'$};
			\fill[] (\x+2,\y-.2) node [below] {$3''$};
			\fill[] (\x+6,\y-.2) node [below] {$1$};
			
			\draw[->, thick] (\x-5.7,\y) -- (\x-2.3,\y); 
			\draw[->, thick] (\x-1.7,\y) -- (\x+1.7,\y); 
			\draw[<-, thick] (\x+2.3,\y) -- (\x+5.7,\y); 
			\draw[<-,thick] (\x-5.7, \y+.3) .. controls (\x-3,\y+2) and (\x-1,\y+2) .. (\x+1.8,\y+.3);
			\draw[<-,thick] (\x-6, \y+.4) .. controls (\x-3,\y+3) and (\x+2,\y+3) .. (\x+5.9,\y+.4);
			\draw[->,thick] (\x-6, \y+.7) .. controls (\x-3,\y+3.5) and (\x+2,\y+3.5) .. (\x+6,\y+.7);
			
			\node[] at (\x+10,\y) {$\overset{\sigma_{3'}}{\longrightarrow}$};
		}
			
		}
	\end{tikzpicture}
	}		
	\quad
	\subfigure{
		\begin{tikzpicture}[scale = .35]
		\tikzstyle{every node} = [font = \small]
		\foreach \x in {0}
		{
		
			\foreach \y in {0}
			{
			
			\fill (\x-6, \y) circle (.15);
			\fill (\x-2,\y) circle (.15);
			\fill (\x+2,\y) circle (.15);
			\fill (\x+6,\y) circle (.15);
			
			\fill[] (\x-6,\y-.2) node [below] {$1$};
			\fill[] (\x-2,\y-.2) node [below] {$2'$};
			\fill[] (\x+3,\y-.2) node [below] {$3''=3$};
			\fill[] (\x+6,\y-.2) node [below] {$1$};
			
			\draw[->, thick] (\x-5.7,\y) -- (\x-2.3,\y); 
			\draw[->, thick] (\x-1.7,\y) -- (\x+1.7,\y); 
			\draw[->, thick] (\x+2.3,\y) -- (\x+5.7,\y); 
			\draw[<-,thick] (\x-5.7, \y+.3) .. controls (\x-3,\y+2) and (\x-1,\y+2) .. (\x+1.8,\y+.3);		
		}
		}
	\end{tikzpicture}
	}
\end{figure}
\end{example}

\begin{proposition}
Flips of arcs in a frame $T_\beta(d_i)$ correspond to mutations of vertices in the associated framing quiver $Q_\beta(i)$.  
\end{proposition}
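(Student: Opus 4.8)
The plan is to reduce the statement to the already-established correspondence between flips and quiver mutations for ordinary (finite) ideal triangulations, by exploiting the fact that both operations are \emph{local}. A flip of an arc $d_k$ replaces only $d_k$, and the arc it is replaced by depends only on the quadrilateral that $d_k$ bounds; likewise, the mutation $\sigma_k$ of Definition \ref{def:mutation} reverses only the arrows incident to $k$, creates new arrows only between neighbours of $k$, and is determined by the subquiver supported on $k$ together with its in- and out-neighbours. Hence it suffices to check that, for each non-framing arc $d_k$ of $T_\beta(d_i)$, the passage from $Q_\beta(i)$ to the quiver of the flipped frame agrees, on the star of $k$, with $\sigma_k Q_\beta(i)$.

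First I would treat the generic situation: $d_k$ is an interior arc of the frame whose bounding quadrilateral $P$ does not involve a copy of the framing arc $d_i$. Since the frame $T_\beta(d_i)$ is a finite triangulated region sitting inside the strip $\mathbb{U}$, the configuration consisting of $P$, its four sides, its (at most) two diagonals, and the arrows read off via Definition \ref{def:QT} is literally the same local configuration that occurs inside a finite ideal triangulation (boundary segments of $\mathbb{U}$ may appear among the sides of $P$ and, as in the finite theory, contribute no vertices). Therefore the known flip--mutation correspondence applies verbatim on the star of $k$, and nothing changes elsewhere.

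The point requiring genuine care --- and the step I expect to be the main obstacle --- is the behaviour near the two framing vertices. If $d_k$ is adjacent in the frame to a copy of $d_i$, that copy is one side of the quadrilateral $P$, and after the flip the associated framing vertex may gain or lose arrows; if $d_k$ is adjacent to \emph{both} copies of $d_i$ (which happens for sufficiently narrow frames), step~(1) of Definition \ref{def:mutation} can produce a two-cycle between the two framing vertices. The key observation is that the two copies of $d_i$ are genuinely distinct arcs of the frame, bounding distinct triangles, hence correspond to two distinct vertices of $Q_\beta(i)$; no cancellation between them is warranted, and this is precisely why $Q_\beta(i)$ is drawn with two separate framing vertices rather than with them identified. (This is exactly the phenomenon in the example preceding the proposition, where identifying the two framing vertices would spuriously cancel the $1\leftrightarrows 3$ two-cycle.) With the two copies kept apart, the flip of $d_k$ becomes, in the universal cover, an honest flip inside a finite triangulated region, so the local recipe of the previous paragraph applies again.

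Finally I would dispose of the degenerate possibility that $d_i$ is the only strictly asymptotic arc in $T_\beta$: then, by Figure \ref{strict_asymp_flip}, a flip of $d_i$ merely reverses its orientation and leaves $Q_\beta$ unchanged, which matches the convention $\sigma_{d_i}Q_\beta = Q_\beta$ adopted for framing quivers, while every other arc of the frame is finite and is covered by the local argument above. Assembling the cases gives the claimed correspondence for all flips of arcs in $T_\beta(d_i)$.
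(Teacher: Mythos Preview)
Your proof is correct, but it takes a different route from the paper's. You argue via locality: flips and mutations are determined by the star of the relevant arc/vertex, so you check the correspondence case by case, with special attention to arcs adjacent to one or both copies of the framing arc $d_i$. The key step is your observation that, because the two copies of $d_i$ are kept as \emph{distinct} vertices in $Q_\beta(i)$, no spurious cancellation occurs and the local flip--mutation dictionary carries over from the finite theory.

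The paper instead performs a single global reduction: it relabels the marked points of the frame as $0,\dots,p$, adjoins a formal vertex $p+1$ at the ``point at infinity'' where the two strictly asymptotic framing arcs meet, and observes that the frame then becomes a triangulation of an ordinary $(p+2)$-gon with $p-1$ internal arcs. The two copies of $d_i$ become the two boundary edges $[0,p+1]$ and $[p,p+1]$ of this polygon. Since flips in a polygon are already known to correspond to quiver mutations, the proposition follows in one stroke. This trick packages your case analysis into a single picture: keeping the framing vertices separate is exactly what happens automatically when the two copies of $d_i$ become distinct boundary edges of the polygon, and your worry about two-cycles between framing vertices dissolves because boundary edges carry no quiver vertices at all (only the internal arcs do). Your approach has the virtue of being self-contained and explaining \emph{why} the framing-vertex convention is the right one; the paper's approach is shorter and makes the connection to the type~$A$ polygon case explicit. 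Your final paragraph on the degenerate case where $d_i$ is the only strictly asymptotic arc is harmless but not strictly needed, since the proposition concerns only flips of non-framing arcs.
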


\begin{proof}
Let $T_\beta(d_i)$ be a $d_i$-frame of $T_\beta$, and let $Q_\beta(i)$ be the quiver associated to $T_\beta(d_i)$. Without loss of generality, we relabel the marked points of the frame from $0, \ldots, p-1, p$.

\begin{center}
\begin{tikzpicture}[scale = .45]
		\tikzstyle{every node} = [font = \small]
	\foreach \x in {0}
	{
		
		\draw (\x-3,0) -- (\x+4,0);
		\Prufer{\x-2}{0}{7}{}
		
		\fill (\x+1.5,0) circle (.1);
		\fill (\x-1,0) circle (.1);
		\fill (\x,0) circle (.1);
		
		\Prufer{\x+3}{0}{2}{}
		
		\fill (\x-2,0) node [below] {$0$};
		\fill (\x+1.5,0) node [below] {$p-1$};
		\fill (\x+3,0) node [below] {$p$};

	}

	\end{tikzpicture}
	\end{center}

and give a name to the ``point" where the two strictly asymptotic arcs meet:

\begin{center}
\begin{tikzpicture}[scale = .45]
		\tikzstyle{every node} = [font = \small]
	\foreach \x in {0}
	{
		\fill (\x+5,3) circle (.1) node [right] {$p+1$};
		
		\draw (\x-3,0) -- (\x+4,0);
		\Prufer{\x-2}{0}{7}{}
		
		\fill (\x+1.5,0) circle (.1);
		\fill (\x-1,0) circle (.1);
		\fill (\x,0) circle (.1);
		
		\Prufer{\x+3}{0}{2}{}
		
		\fill (\x-2,0) node [below] {$0$};
		\fill (\x+1.5,0) node [below] {$p-1$};
		\fill (\x+3,0) node [below] {$p$};

	}

	\end{tikzpicture}
	\end{center}
	
Then this is equivalent to a triangulated polygon on $p+2$ vertices with $p-1$ arcs.

\begin{center}
\begin{tikzpicture}[scale = 1.5]
		\tikzstyle{every node} = [font = \small]
		\foreach \x in {0}
		{
			\foreach \y in {0}
			{
			\draw (\x,\y) circle (1);
			\fill(\x,\y+1) circle (.05);
			\fill (\x+0,\y+1) node [above] {$p+1$};
			\fill(\x+.866,\y+0.5) circle (.05);
			\fill (\x+0.866,\y+.5) node [right] {$p$};
			\fill(\x-.866,\y+0.5) circle (.05);
			\fill (\x-0.866,\y+.5) node [left] {$0$};	
			\fill(\x-1,\y-0) circle (.05);
			\fill (\x-1,\y-0) node [left] {$1$};	
			\fill(\x+1,\y-0) circle (.05);
			\fill (\x+1,\y) node [right] {$p-1$};
			\fill (\x-0,\y-1) node [below] {$\ldots$};
			\fill (\x-0.7,\y+.816) node [above] {$d_i$};
			\fill (\x+0.7,\y+.816) node [above] {$d_i$};		
		
			}
		}
	\end{tikzpicture}
	\end{center}

Now it is known that flips of arcs in a triangulation of an unpunctured polygon correspond to mutation of vertices of the associated quiver. Therefore, any sequence of flips of arcs in $T_\beta(d_i)$ does the same thing as the corresponding sequence of mutations of vertices in the associated quiver $Q_\beta(i)$.
\end{proof}

An alternative way to mutate quivers associated to asymptotic triangulations is by using quivers with potentials (cf. Appendix A).

\section{Dehn Twist}

In the previous section we defined asymptotic triangulations. In this section we describe the process of going from a finite triangulation to an asymptotic triangulation. This process constitutes applying the Dehn twist infinitely many times to a triangulation. Applying the Dehn twist infinitely many times causes some arcs of the triangulation to become identified, while breaking other arcs into two parts so that we are left with two triangulations -- one based at each boundary component of our annulus.

\begin{definition}
Let $z$ be a non-contractible closed curve in $C_{p,q}$. Consider the homeomorphism of $C_{p,q}$ obtained by cutting $C_{p,q}$ along $z$ and gluing it back after rotating the inner boundary by $2\pi$. This homeomorphism is called a \emph{Dehn twist}.
We have chosen a counter-clockwise orientation of our surface, so when applying a positive Dehn twist, we rotate the inner boundary $\partial'$ of the annulus clockwise by $2\pi$ (Fig. \ref{dehn_twist}). A negative Dehn twist would be a rotation of $\partial'$ by $2\pi$ in the counter-clockwise direction.
\end{definition}

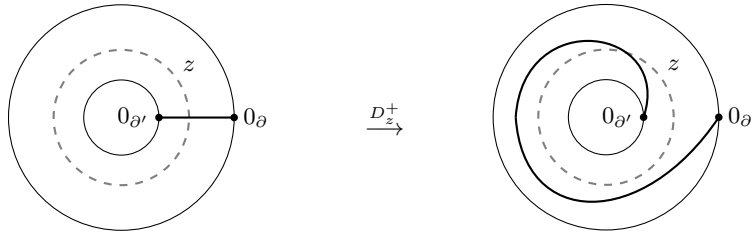
\begin{figure}[ht]
	\subfigure{
			\begin{tikzpicture}[scale = 1]
			\tikzstyle{every node} = [font = \small]
			\foreach \x in {0}
			{
				\foreach \y in {0}
				{
					\draw (\x,\y) circle (.5);
					\draw(\x, \y) circle (1.5);
					\draw[dashed, thick, gray] (\x, \y) circle (.9);
					\fill (\x+.9, \y+.5) node [above] {$z$};
					
					\draw[thick] (\x+.5,\y) -- (\x+1.5, \y);
					\fill (\x+.5, \y) circle (.05);
					\fill (\x+.5,\y) node [left] {$0_{\partial'}$};
					
					\fill (\x+1.5,\y) circle (.05);
					\fill (\x+1.5,\y) node [right] {$0_\partial$};
					
					\node[] at (\x+3.5,\y) {$\overset{D_z^+}{\longrightarrow}$};
			}
		}
		
		\end{tikzpicture}
		}
		\quad \quad
			\subfigure{
			\begin{tikzpicture}[scale = 1]
			\tikzstyle{every node} = [font = \small]
			\foreach \x in {0}
			{
				\foreach \y in {0}
				{
					\draw (\x,\y) circle (.5);
					\draw(\x, \y) circle (1.5);
					\draw[dashed, thick, gray] (\x, \y) circle (.9);
					\fill (\x+.9, \y+.5) node [above] {$z$};
					
					\draw[thick] (\x+1.5,\y) .. controls (\x+.5, \y-1.5) and (\x-1.1, \y-1.5) .. (\x-1.2, \y);
					\draw[thick] (\x-1.2,\y) .. controls (\x-1.1, \y+1.5) and (\x+.9, \y+1.2) .. (\x+.5, \y);

					\fill (\x+.5, \y) circle (.05);
					\fill (\x+.5,\y) node [left] {$0_{\partial'}$};
					
					\fill (\x+1.5,\y) circle (.05);
					\fill (\x+1.5,\y) node [right] {$0_\partial$};
			}
		}
		
		\end{tikzpicture}
		}
\caption{Dehn twist around closed curve $z$.}
\label{dehn_twist}
\end{figure}

\begin{notation}
$D_z^+$ denotes the \emph{positive Dehn twist} with respect to $z$, and $D_z^-$ denotes the \emph{negative Dehn twist} with respect to $z$.
\end{notation}

$D^n_z$ is the $n$-th Dehn twist (the Dehn twist applied $n$ times). We define: 
\[
D_z^{+\infty} = \lim_{n \to \infty} D^n_z,
\]
and 
\[
D_z^{-\infty} = \lim_{n \to -\infty} D^n_z.
\]

Let $\gamma = [i,j]$ be an arc in a triangulation $T$ of  $C_{p,q}$. If $i$ and $j$ lie on different boundary components, then:
\begin{align}
D_z^{+\infty} \cdot \gamma &= \{ \pi_{i},\pi_{j}\},
\label{eq:dehn_plus}
\end{align}
and
\begin{align}
D_z^{-\infty} \cdot \gamma &= \{ \alpha_{i},\alpha_{j}\}.
\label{eq:dehn_minus}
\end{align}
If $i$ and $j$ lie on the same boundary component, then $D^{\pm \infty}_z \cdot \gamma = \gamma$. 

Given a triangulation $T$ of the annulus $C_{p,q}$, we define
\[
D^{+\infty}_z(T) = \bigcup_{\gamma \in T} D_z^{+\infty} \cdot \gamma \mbox{\,\,\,\,\,\,and\,\,\,\,\,\,} D^{-\infty}_z(T) = \bigcup_{\gamma \in T} D_z^{-\infty} \cdot \gamma .
\]

Note that since $T$ contains at least two bridging arcs (Prop. \ref{ann_bridge}), $D_z^{\pm \infty}(T)$ is always asymptotic.

\begin{example}
Consider the following triangulation $T$ of $C_{p,q}$. Each application of the Dehn twist lengthens the bridging arcs of $T$. After infinitely many Dehn twists, the bridging arcs have infinite length and ``break" into Pr\"ufer arcs stemming from both boundary components.
\begin{figure}[h!]
\centering
\subfigure{
\begin{tikzpicture}[xscale = .175, yscale = .175]
					\foreach \x in {0}
					{
						\foreach \y in {0}
						{
							\coordinate (H1) at (\x-3,\y+4);
							\coordinate (H2) at (\x-1,\y+4);
							\coordinate (H3) at (\x+1,\y+4);
							\coordinate (H4) at (\x+3,\y+4);

							\draw[thick] (\x-5,\y+4) -- (\x+5,\y+4);
							\fill (H1) circle (.1);
							\fill (H2) circle (.1);
							\fill (H3) circle (.1);
							\fill (H4) circle (.1);

							\coordinate (B-1) at (\x-5,\y-4);
							\coordinate (B0) at (\x-4,\y-4);
							\coordinate (B1) at (\x-3,\y-4);
							\coordinate (B2) at (\x-2,\y-4);
							\coordinate (B3) at (\x-1,\y-4);
							\coordinate (B4) at (\x,\y-4);
							\coordinate (B5) at (\x+1,\y-4);
							\coordinate (B6) at (\x+2,\y-4);
							\coordinate (B7) at (\x+3,\y-4);

							\draw[thick] (\x-5,\y-4) -- (\x+5,\y-4);
							\fill (B1) circle (.1);
							\fill (B2) circle (.1);
							\fill (B3) circle (.1);
							\fill (B4) circle (.1);
							\fill (B5) circle (.1);
							\fill (B6) circle (.1);
							\fill (B7) circle (.1);
							
							\draw (H1) -- (B1) -- (H2) -- (B3) -- (H3) -- (B5) -- (H4) -- (B7);
							\draw (B1) .. controls (\x-2.5,\y-3) and (\x-1.5,\y-3) .. (B3);
							\draw (B3) .. controls (\x-.5,\y-3) and (\x+.5,\y-3) .. (B5);
							\draw (B5) .. controls (\x+1.5,\y-3) and (\x+2.5,\y-3) .. (B7);

							\fill (\x,\y-5) node {\tiny{$T$}};
						}
					}

					\draw[->] (8,-0) -- (10,0);
\end{tikzpicture}}
\quad
\subfigure{
\begin{tikzpicture}[xscale = .175, yscale = .175]

					\foreach \x in {0}
					{
						\foreach \y in {0}
						{
							\coordinate (H1) at (\x-3,\y+4);
							\coordinate (H2) at (\x-1,\y+4);
							\coordinate (H3) at (\x+1,\y+4);
							\coordinate (H4) at (\x+3,\y+4);
							\coordinate (H5) at (\x+5,\y+4);

							\draw[thick] (\x-5,\y+4) -- (\x+5,\y+4);
							\fill (H1) circle (.1);
							\fill (H2) circle (.1);
							\fill (H3) circle (.1);
							\fill (H4) circle (.1);

							\coordinate (B-1) at (\x-5,\y-4);
							\coordinate (B0) at (\x-4,\y-4);
							\coordinate (B1) at (\x-3,\y-4);
							\coordinate (B2) at (\x-2,\y-4);
							\coordinate (B3) at (\x-1,\y-4);
							\coordinate (B4) at (\x,\y-4);
							\coordinate (B5) at (\x+1,\y-4);
							\coordinate (B6) at (\x+2,\y-4);
							\coordinate (B7) at (\x+3,\y-4);

							\draw[thick] (\x-5,\y-4) -- (\x+5,\y-4);
							\fill (B1) circle (.1);
							\fill (B2) circle (.1);
							\fill (B3) circle (.1);
							\fill (B4) circle (.1);
							\fill (B5) circle (.1);
							\fill (B6) circle (.1);
							\fill (B7) circle (.1);
							
							\draw (H2) -- (B1) -- (H3) -- (B3) -- (H4) -- (B5) -- (H5) -- (B7);
							\draw (B1) .. controls (\x-2.5,\y-3) and (\x-1.5,\y-3) .. (B3);
							\draw (B3) .. controls (\x-.5,\y-3) and (\x+.5,\y-3) .. (B5);
							\draw (B5) .. controls (\x+1.5,\y-3) and (\x+2.5,\y-3) .. (B7);

							\fill (\x,\y-5) node {\tiny{$D_z (T)$}};
						}
					}
\draw[->] (8,-0) -- (10,0);

\end{tikzpicture}
}
\quad
\subfigure{
\begin{tikzpicture}[xscale = .175, yscale = .175]
\foreach \x in {0}
					{
						\foreach \y in {0}
						{
							\coordinate (H0) at (\x-5,\y+4);
							\coordinate (H1) at (\x-3,\y+4);
							\coordinate (H2) at (\x-1,\y+4);
							\coordinate (H3) at (\x+1,\y+4);
							\coordinate (H4) at (\x+3,\y+4);
							\coordinate (H5) at (\x+5,\y+4);

							\draw[thick] (\x-5,\y+4) -- (\x+5,\y+4);
							\fill (H1) circle (.1);
							\fill (H2) circle (.1);
							\fill (H3) circle (.1);
							\fill (H4) circle (.1);

							\coordinate (B-1) at (\x-5,\y-4);
							\coordinate (B0) at (\x-4,\y-4);
							\coordinate (B1) at (\x-3,\y-4);
							\coordinate (B2) at (\x-2,\y-4);
							\coordinate (B3) at (\x-1,\y-4);
							\coordinate (B4) at (\x,\y-4);
							\coordinate (B5) at (\x+1,\y-4);
							\coordinate (B6) at (\x+2,\y-4);
							\coordinate (B7) at (\x+3,\y-4);

							\draw[thick] (\x-5,\y-4) -- (\x+5,\y-4);
							\fill (B1) circle (.1);
							\fill (B2) circle (.1);
							\fill (B3) circle (.1);
							\fill (B4) circle (.1);
							\fill (B5) circle (.1);
							\fill (B6) circle (.1);
							\fill (B7) circle (.1);
							
							\draw (H2) -- (B-1) -- (H3) -- (B1) -- (H4) -- (B3) -- (H5) -- (B5);
							\draw (B-1) .. controls (\x-4.5,\y-3) and (\x-3.5,\y-3) .. (B1);
							\draw (B1) .. controls (\x-2.5,\y-3) and (\x-1.5,\y-3) .. (B3);
							\draw (B3) .. controls (\x-.5,\y-3) and (\x+.5,\y-3) .. (B5);

							\fill (\x,\y-5) node {\tiny{$D_z^2 (T)$}};
						}
					}
					
\draw[->] (7.5,-0) -- (11.5,0);

\end{tikzpicture}
}
\quad
\subfigure{
\begin{tikzpicture}[xscale = .175, yscale = .175]
	\foreach \x in {0}
					{
						\foreach \y in {0}
						{
							\coordinate (H0) at (\x-5,\y+4);
							\coordinate (H1) at (\x-3,\y+4);
							\coordinate (H2) at (\x-1,\y+4);
							\coordinate (H3) at (\x+1,\y+4);
							\coordinate (H4) at (\x+3,\y+4);

							\draw[thick] (\x-5,\y+4) -- (\x+5,\y+4);
							\fill (H1) circle (.1);
							\fill (H2) circle (.1);
							\fill (H3) circle (.1);
							\fill (H4) circle (.1);

							\coordinate (B-1) at (\x-5,\y-4);
							\coordinate (B0) at (\x-4,\y-4);
							\coordinate (B1) at (\x-3,\y-4);
							\coordinate (B2) at (\x-2,\y-4);
							\coordinate (B3) at (\x-1,\y-4);
							\coordinate (B4) at (\x,\y-4);
							\coordinate (B5) at (\x+1,\y-4);
							\coordinate (B6) at (\x+2,\y-4);
							\coordinate (B7) at (\x+3,\y-4);

							\draw[thick] (\x-5,\y-4) -- (\x+5,\y-4);
							\fill (B1) circle (.1);
							\fill (B2) circle (.1);
							\fill (B3) circle (.1);
							\fill (B4) circle (.1);
							\fill (B5) circle (.1);
							\fill (B6) circle (.1);
							\fill (B7) circle (.1);
							
							\hPrufer{\x-3}{\y+4}{3}{}
							\hPrufer{\x-1}{\y+4}{5}{}
							\hPrufer{\x+1}{\y+4}{7}{}
							\hPrufer{\x+3}{\y+4}{9}{}

							\Prufer{\x-3}{\y-4}{9}{}
							\Prufer{\x-1}{\y-4}{7}{}
							\Prufer{\x+1}{\y-4}{5}{}
							\Prufer{\x+3}{\y-4}{3}{}
							\draw (B1) .. controls (\x-2.5,\y-3) and (\x-1.5,\y-3) .. (B3);
							\draw (B3) .. controls (\x-.5,\y-3) and (\x+.5,\y-3) .. (B5);
							\draw (B5) .. controls (\x+1.5,\y-3) and (\x+2.5,\y-3) .. (B7);

							\fill (\x,\y-5) node {\tiny{$D_z^{+\infty} (T)$}};
						}
					}

\end{tikzpicture}
}
\end{figure}
\end{example}

From now on, we will only consider the positive Dehn twist $D^+_z$, but everything can be defined analogously for $D^-_z$. 

Notice that in Equations (\ref{eq:dehn_plus}) \& (\ref{eq:dehn_minus}), our arc $\gamma$ gives rise to two new arcs in the limit. However, we do not end up with twice as many arcs in the asymptotic triangulation. This is because all bridging arcs originating at the same boundary vertex become identified in the limit: Consider two bridging arcs $d_{[i,j]}, d_{[i,k]}$ in $T$ where $i$ lies on $\partial$ and $j,k$ lie on $\partial'$ (possibly, $j =k$). Then $D^+_z \cdot  d_{[i,j]} = \{ \pi_i, \pi_j\}$ and $D^+_z \cdot  d_{[i,j]} = \{ \pi_i, \pi_k\}$. So two of the vertices arising from $D^+_z \cdot d_{[i,j]}$ and $D^+_z \cdot d_{[i,k]}$ in $Q_T$ are one vertex $\pi_i$ in $Q_\partial$. This is illustrated in Figures \ref{one_to_two} \& \ref{two_to_one}. A finite triangulation of an annulus $C_{p,q}$ has $p+q$ arcs. The number of asymptotic arcs of an asymptotic triangulation is also $p+q$, which we can see by the decomposition $T = T_\partial \sqcup T_{\partial'}$ where $|T_\partial| = p$, $|T_{\partial'}| = q$.

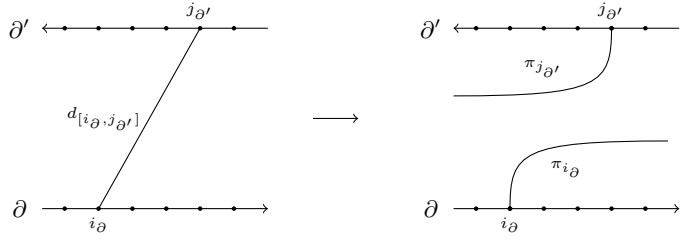
\begin{figure}[h!]
\centering
\subfigure{
	\begin{tikzpicture}[scale = .3]
		\tikzstyle{every node} = [font = \small]
		\foreach \x in {0}
		{
			\foreach \y in {-8}
			{
				\draw[<-] (\x-5,\y+4) -- (\x+5,\y+4);
				\fill (\x-6,\y+4) node {$\partial'$};
				\draw[->] (\x-5,\y-4) -- (\x+5,\y-4);
				\fill (\x-6,\y-4) node {$\partial$};

				\foreach \t in {-4,-2.5,-1,0.5,2,3.5}
				{
					\fill (\x+\t,\y+4) circle (.1);
					\fill (\x+\t,\y-4) circle (.1);
				}

				\draw (\x+2,\y+4) -- (\x-2.5,\y-4);
				\fill (\x+2,\y+4) node [above] {\tiny{$j_{\partial'}$}};
				\fill (\x-2.5,\y-4) node [below] {\tiny{$i_{\partial}$}};
				\fill (\x-2.25,\y) node [] {\tiny{$d_{[i_\partial,j_{\partial'}]}$}};
				
			}
		}
		
		\draw[->] (7,-8) -- (9,-8);
		\end{tikzpicture}
		}
		\quad
		\subfigure{
			\begin{tikzpicture}[scale = .3]
			\tikzstyle{every node} = [font = \small]
			\foreach \x in {0}
			{
				\foreach \y in {-8}
				{
					\draw[<-] (\x-5,\y+4) -- (\x+5,\y+4);
					\fill (\x-6,\y+4) node {$\partial'$};
					\draw[->] (\x-5,\y-4) -- (\x+5,\y-4);
					\fill (\x-6,\y-4) node {$\partial$};

					\foreach \t in {-4,-2.5,-1,0.5,2,3.5}
					{
						\fill (\x+\t,\y+4) circle (.1);
						\fill (\x+\t,\y-4) circle (.1);
					}

				\Prufer{\x-2.5}{\y-4}{7}{}
				\fill (\x-1,\y+3) node [below] {\tiny{$\pi_{j_{\partial'}}$}};
				\hPrufer{\x+2}{\y+4}{7}{}
				\fill (\x-0,\y-1.5) node [below] {\tiny{$\pi_{i_{\partial}}$}};

				\fill (\x+2,\y+4) node [above] {\tiny{$j_{\partial'}$}};
				\fill (\x-2.5,\y-4) node [below] {\tiny{$i_{\partial}$}};
			}
		}
		
		\end{tikzpicture}
		}
\caption{Bridging arc becoming two asymptotic arcs.}
\label{one_to_two}
\end{figure}

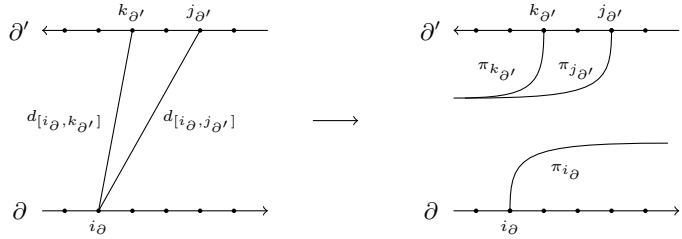
\begin{figure}[h!]
\centering
\subfigure{
	\begin{tikzpicture}[scale = .3]
		\tikzstyle{every node} = [font = \small]
		\foreach \x in {0}
		{
			\foreach \y in {-8}
			{
				\draw[<-] (\x-5,\y+4) -- (\x+5,\y+4);
				\fill (\x-6,\y+4) node {$\partial'$};
				\draw[->] (\x-5,\y-4) -- (\x+5,\y-4);
				\fill (\x-6,\y-4) node {$\partial$};

				\foreach \t in {-4,-2.5,-1,0.5,2,3.5}
				{
					\fill (\x+\t,\y+4) circle (.1);
					\fill (\x+\t,\y-4) circle (.1);
				}

				\draw (\x+2,\y+4) -- (\x-2.5,\y-4);
				\draw(\x-1,\y+4) -- (\x-2.5,\y-4);
				\fill (\x+2,\y+4) node [above] {\tiny{$j_{\partial'}$}};
				\fill (\x-2.5,\y-4) node [below] {\tiny{$i_{\partial}$}};
				\fill (\x-1,\y+4) node [above] {\tiny{$k_{\partial'}$}};
				\fill (\x-4,\y) node [] {\tiny{$d_{[i_\partial,k_{\partial'}]}$}};
				\fill (\x+2,\y) node [] {\tiny{$d_{[i_\partial,j_{\partial'}]}$}};
				
			}
		}
		
		\draw[->] (7,-8) -- (9,-8);
		\end{tikzpicture}
		}
		\quad
		\subfigure{
			\begin{tikzpicture}[scale = .3]
			\tikzstyle{every node} = [font = \small]
			\foreach \x in {0}
			{
				\foreach \y in {-8}
				{
					\draw[<-] (\x-5,\y+4) -- (\x+5,\y+4);
					\fill (\x-6,\y+4) node {$\partial'$};
					\draw[->] (\x-5,\y-4) -- (\x+5,\y-4);
					\fill (\x-6,\y-4) node {$\partial$};

					\foreach \t in {-4,-2.5,-1,0.5,2,3.5}
					{
						\fill (\x+\t,\y+4) circle (.1);
						\fill (\x+\t,\y-4) circle (.1);
					}

				\Prufer{\x-2.5}{\y-4}{7}{}
				\fill (\x+.5,\y+3) node [below] {\tiny{$\pi_{j_{\partial'}}$}};
				\hPrufer{\x+2}{\y+4}{6.5}{}
				\fill (\x-0,\y-1.5) node [below] {\tiny{$\pi_{i_{\partial}}$}};
				\hPrufer{\x-1}{\y+4}{4}{}
				\fill (\x-3,\y+3) node [below] {\tiny{$\pi_{k_{\partial'}}$}};

				\fill (\x+2,\y+4) node [above] {\tiny{$j_{\partial'}$}};
				\fill (\x-2.5,\y-4) node [below] {\tiny{$i_{\partial}$}};
				\fill (\x-1,\y+4) node [above] {\tiny{$k_{\partial'}$}};
			}
		}
		
		\end{tikzpicture}
		}
\caption{Two bridging arcs becoming one asymptotic arc.}
\label{two_to_one}
\end{figure}

\section{Coxeter transformations}

The Dehn twist provides us with a topological way of obtaining asymptotic triangulations. In this section, we describe a combinatorial method of obtaining asymptotic triangulations. This \emph{Coxeter transformation} is done by applying a sequence of flips to the arcs of the triangulation. On the level of quivers, we perform the sequence of corresponding mutations. The benefit of having a combinatorial method to describe this process is that we can now study other variables and systems associated to the surface. For example, we can look at root systems and (cluster) variables associated to arcs of the triangulation, and we expect this to provide a way to define cluster structures on asymptotic triangulations.

\subsection{Quivers}
Given a source $i$ of a quiver $Q$, the quiver $\sigma_iQ$ is obtained by reversing all arrows in $Q$ which start or end at $i$.

Recall from Definition \ref{def:admiss}, that an ordering $i_1, \ldots, i_n$ of the vertices of $Q$ is (source-) admissible if for each $p$ the vertex $i_p$ is a source in the quiver $\sigma_{i_{p-1}} \ldots \sigma_{i_1}Q$.
  
The following lemma is a well-known result from graph theory.
\begin{lemma}
There exists an admissible ordering of the vertices of $Q$ if and only if there are no oriented cycles in $Q$.
\end{lemma}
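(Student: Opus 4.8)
The plan is to treat the two implications separately. The basic fact I will lean on is that, for $i$ a source (or sink) of a quiver $\tilde Q$, the operation $\tilde Q\mapsto\sigma_i\tilde Q$ only \emph{reverses} the arrows incident to $i$: it creates and destroys no arrows, and it leaves untouched every arrow not incident to $i$.

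For the ``only if'' direction I would argue contrapositively. Suppose $Q$ has an oriented cycle $C$ and, for contradiction, that $i_1,\dots,i_n$ is an admissible ordering; write $Q_p=\sigma_{i_p}\cdots\sigma_{i_1}Q$. Let $p$ be least with $i_p$ a vertex of $C$. Since none of $i_1,\dots,i_{p-1}$ lies on $C$, each $\sigma_{i_j}$ with $j<p$ reverses only arrows incident to the off-cycle vertex $i_j$, hence fixes every arrow of $C$; so $C$ is still an oriented cycle of $Q_{p-1}$. Then the arrow of $C$ ending at $i_p$ is an incoming arrow at $i_p$ in $Q_{p-1}$, contradicting that $i_p$ is a source of $Q_{p-1}$. (In particular this handles loops, which are length-one oriented cycles.)

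For the ``if'' direction, assume $Q$ has no oriented cycle, so the relation $u\preceq w$ defined by ``there is a directed path in $Q$ from $u$ to $w$'' is a partial order on $Q_0$. Every finite acyclic quiver has a source (a longest directed path cannot start at a vertex with an incoming arrow), so by repeatedly removing a source we obtain a linear extension $i_1,\dots,i_n$ of $\preceq$, i.e.\ an ordering in which every arrow $u\to w$ of $Q$ has $u$ before $w$. I claim any such ordering is admissible, which I would prove by induction on $p$, the statement being that $i_p$ is a source of $Q_{p-1}$ (so that $Q_{p-1}$ is defined). The bookkeeping point is that, each vertex being reflected at most once among $i_1,\dots,i_{p-1}$, an arrow of $Q$ between $i_p$ and a vertex $x$ is reversed in $Q_{p-1}$ exactly when $x\in\{i_1,\dots,i_{p-1}\}$. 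Now an arrow $x\to i_p$ of $Q$ forces $x$ before $i_p$, hence it is reversed to $i_p\to x$ in $Q_{p-1}$; an arrow $i_p\to x$ of $Q$ forces $x$ after $i_p$, hence it is unchanged. Either way every arrow of $Q_{p-1}$ at $i_p$ points out of $i_p$, so $i_p$ is a source, closing the induction.

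I expect the ``if'' direction to be the main obstacle, and within it the delicate point is choosing the right candidate ordering and controlling which arrows at the current vertex have been flipped; routing everything through linear extensions of the path order $\preceq$ is what makes the parity-of-reversals count go through cleanly. The ``only if'' direction should be straightforward once one observes that reflecting away from an oriented cycle leaves the cycle intact.
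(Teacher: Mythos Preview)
Your proof is correct. Both directions are handled cleanly: the contrapositive argument for ``only if'' correctly uses that reflecting at a vertex off the cycle cannot touch any arrow of the cycle, and the ``if'' direction correctly reduces to a topological sort together with the parity observation that an arrow at $i_p$ has been reversed exactly when its other endpoint lies among $i_1,\dots,i_{p-1}$. One cosmetic point: the parenthetical ``(so that $Q_{p-1}$ is defined)'' should really read ``so that $Q_p$ is defined''; the well-definedness of $Q_{p-1}$ comes from the inductive hypothesis at the \emph{previous} step, not from $i_p$ being a source.

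As for comparison with the paper: there is nothing to compare. The paper states the lemma as ``a well-known result from graph theory'' and gives no proof at all. Your write-up therefore supplies strictly more than the paper does. If anything, your argument is the standard one (topological sort for existence, first cycle vertex for the obstruction), which is presumably why the authors felt justified in omitting it.
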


\begin{definition}
If ${\ivec}= i_1, \ldots, i_n$ is an admissible ordering on a quiver $Q$, then the Coxeter transformation of $Q$ is 
$$\cox_{\ivec}(Q) = \sigma_{i_n} \ldots \sigma_{i_1}Q.$$
\label{def:admissCoxQ}
\end{definition}

Recall from Section 1 that $\cox_{\ivec}(Q) \cong Q$, and note that $\cox_{\ivec}(Q)$ is independent of the chosen admissible ordering $\ivec$. Thus we will drop the index and just write $\cox(Q)$. 

As described in Definition \ref{def:QT}, we have quivers associated to triangulations.  We consider what happens to an associated triangulation when we mutate the arcs of the triangulation and the vertices of the associated quiver concurrently.

Let $T$ be a triangulation of $C_{p,q}$. Cycles in $Q_T$ occur when there are peripheral arcs in $T$. If there are any peripheral arcs in $T$, we can flip them until we obtain a triangulation $\tilde{T}$ consisting only of bridging arcs. For an example, see Figure \ref{fig:periph_mut_T}.

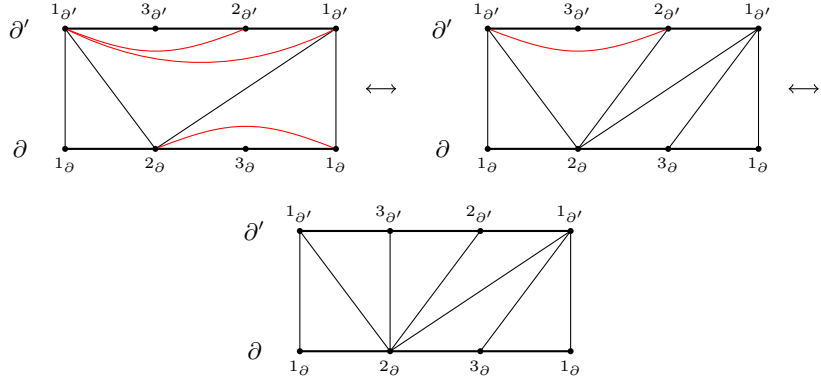
\begin{figure}[h!]
\centering
\subfigure{
\begin{tikzpicture}[scale = .4]
		\tikzstyle{every node} = [font = \tiny]
		\foreach \x in {0}
		{
			\foreach \y in {-8}
			{
				\draw[thick] (\x-4,\y+4) -- (\x+5,\y+4);
				\fill (\x-5.5,\y+4) node {\small{$\partial'$}};
				\draw[thick] (\x-4,\y-0) -- (\x+5,\y-0);
				\fill (\x-5.5,\y-0) node {\small{$\partial$}};

				\draw[] (\x-4,\y+4) -- (\x-4,\y-0);
				\draw[] (\x-4,\y+4) -- (\x-1,\y-0);
				\draw[] (\x+5,\y+4) -- (\x-1,\y-0);
				\draw[] (\x+5,\y+4) -- (\x+5,\y-0);			
					
				\draw[red] (\x-1,\y-0) .. controls (\x+1.5,\y+1) and (\x+2.5,\y+1) .. (\x+5,\y-0);
				\draw[red] (\x-4,\y+4) .. controls (\x-1,\y+2.5) and (\x+2,\y+2.5) .. (\x+5,\y+4);
				\draw[red] (\x-4,\y+4) .. controls (\x-1.5,\y+3) and (\x-.5,\y+3) .. (\x+2,\y+4);
				
				\foreach \t in {-4,-1,2,5}
				{
					\fill (\x+\t,\y+4) circle (.1);
					\fill (\x+\t,\y-0) circle (.1);
				}
					
				\fill (\x-4,\y+4) node [above] {$1_{\partial'}$};
				\fill (\x-1,\y+4) node [above] {$3_{\partial'}$};
				\fill (\x+2,\y+4) node [above] {$2_{\partial'}$};
				\fill (\x+5,\y+4) node [above] {$1_{\partial'}$};

				\fill (\x-4,\y-0) node [below] {$1_{\partial}$};
				\fill (\x-1,\y-0) node [below] {$2_{\partial}$};
				\fill (\x+2,\y-0) node [below] {$3_{\partial}$};
				\fill (\x+5,\y-0) node [below] {$1_{\partial}$};	
				
				\draw [<->] (\x+6,\y+2) -- (\x+7,\y+2);
			}
			
		}
		
		\end{tikzpicture}
		}
		\subfigure{
		\begin{tikzpicture}[scale = .4]
		\tikzstyle{every node} = [font = \tiny]
		\foreach \x in {0}
		{
			\foreach \y in {-8}
			{
				\draw[thick] (\x-4,\y+4) -- (\x+5,\y+4);
				\fill (\x-5.5,\y+4) node {\small{$\partial'$}};
				\draw[thick] (\x-4,\y-0) -- (\x+5,\y-0);
				\fill (\x-5.5,\y-0) node {\small{$\partial$}};

				\draw[] (\x-4,\y+4) -- (\x-4,\y-0);
				\draw[] (\x-4,\y+4) -- (\x-1,\y-0);
				\draw[] (\x+5,\y+4) -- (\x-1,\y-0);
				\draw[] (\x+5,\y+4) -- (\x+5,\y-0);			
				\draw[] (\x+5,\y+4) -- (\x+2,\y-0);
				\draw[] (\x+2,\y+4) -- (\x-1,\y-0);	
					
				\draw[red] (\x-4,\y+4) .. controls (\x-1.5,\y+3) and (\x-.5,\y+3) .. (\x+2,\y+4);
				
				\foreach \t in {-4,-1,2,5}
				{
					\fill (\x+\t,\y+4) circle (.1);
					\fill (\x+\t,\y-0) circle (.1);
				}
					
				\fill (\x-4,\y+4) node [above] {$1_{\partial'}$};
				\fill (\x-1,\y+4) node [above] {$3_{\partial'}$};
				\fill (\x+2,\y+4) node [above] {$2_{\partial'}$};
				\fill (\x+5,\y+4) node [above] {$1_{\partial'}$};

				\fill (\x-4,\y-0) node [below] {$1_{\partial}$};
				\fill (\x-1,\y-0) node [below] {$2_{\partial}$};
				\fill (\x+2,\y-0) node [below] {$3_{\partial}$};
				\fill (\x+5,\y-0) node [below] {$1_{\partial}$};	
				
				\draw [<->] (\x+6,\y+2) -- (\x+7,\y+2);
			}
			
		}
		
		\end{tikzpicture}
		}
		\quad
		\subfigure{
		\begin{tikzpicture}[scale = .4]
		\tikzstyle{every node} = [font = \tiny]
		\foreach \x in {0}
		{
			\foreach \y in {-8}
			{
				\draw[thick] (\x-4,\y+4) -- (\x+5,\y+4);
				\fill (\x-5.5,\y+4) node {\small{$\partial'$}};
				\draw[thick] (\x-4,\y-0) -- (\x+5,\y-0);
				\fill (\x-5.5,\y-0) node {\small{$\partial$}};

				\draw[] (\x-4,\y+4) -- (\x-4,\y-0);
				\draw[] (\x-4,\y+4) -- (\x-1,\y-0);
				\draw[] (\x+5,\y+4) -- (\x-1,\y-0);
				\draw[] (\x+5,\y+4) -- (\x+5,\y-0);			
				\draw[] (\x+5,\y+4) -- (\x+2,\y-0);
				\draw[] (\x+2,\y+4) -- (\x-1,\y-0);	
				\draw[] (\x-1,\y+4) -- (\x-1,\y-0);

				\foreach \t in {-4,-1,2,5}
				{
					\fill (\x+\t,\y+4) circle (.1);
					\fill (\x+\t,\y-0) circle (.1);
				}
					
				\fill (\x-4,\y+4) node [above] {$1_{\partial'}$};
				\fill (\x-1,\y+4) node [above] {$3_{\partial'}$};
				\fill (\x+2,\y+4) node [above] {$2_{\partial'}$};
				\fill (\x+5,\y+4) node [above] {$1_{\partial'}$};

				\fill (\x-4,\y-0) node [below] {$1_{\partial}$};
				\fill (\x-1,\y-0) node [below] {$2_{\partial}$};
				\fill (\x+2,\y-0) node [below] {$3_{\partial}$};
				\fill (\x+5,\y-0) node [below] {$1_{\partial}$};	
			}
			
		}
		\end{tikzpicture}}
\caption{Flipping the peripheral arcs of a triangulation $T$.}
\label{fig:periph_mut_T}
\end{figure}

We call such a triangulation $\tilde{T}$ a \emph{bridging triangulation}.

\begin{definition}
Let $\tilde{T}$ be a bridging triangulation. Let $Q_{\tilde{T}}$ be the associated quiver, and ${\ivec}= i_{1} \ldots i_{n}$ an admissible ordering of the vertices of $Q_{\tilde{T}}$. Then the \emph{Coxeter transformation} of $\tilde{T}$ is: 
\[
\cox_{{\ivec}}(\tilde{T}) = \mu_{d_{i_{n}}} \ldots \mu_{d_{i_{1}}}\tilde{T}.
\]
\end{definition}

\begin{lemma}
Let $\tilde{T}$ be a bridging triangulation and ${\ivec}= i_1 \ldots i_n$ an admissible ordering of $Q_{\tilde{T}}$. Then $\cox_{\ivec}(\tilde{T})$ moves endpoints of all arcs by -1 on both $\partial$ and $\partial'$.
\label{lemma:endpoints}
\end{lemma}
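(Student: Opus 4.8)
The plan is to interpret a Coxeter transformation on a bridging triangulation as the combinatorial shadow of a single positive Dehn twist, and to prove directly that the sequence of flips $\mu_{d_{i_n}} \cdots \mu_{d_{i_1}}$ moves every arc's endpoints back by $1$ on each boundary component. First I would set up coordinates: work in the universal cover $\mathbb{U}$ of $Cyl_{p,q}$ with the integer labels $i_\partial = (iq,0)$ and $j_{\partial'} = (-jp,1)$ introduced in Section 2.3, so that "moving endpoints by $-1$" has the precise meaning that an arc with endpoints $a_\partial$, $b_{\partial'}$ (or $a_\partial$, $b_\partial$, or $a_{\partial'}$, $b_{\partial'}$) is sent to the arc with endpoints $(a-1)_\partial$, $(b-1)_{\partial'}$, etc., in the appropriate frame. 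Since $\tilde T$ is a bridging triangulation, every arc is bridging, so $Q_{\tilde T}$ is acyclic (cycles in $Q_T$ come only from peripheral arcs, as noted before the definition of bridging triangulation), and an admissible ordering $\ivec$ exists by the graph-theoretic lemma.

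The key step is a local analysis of what the source-admissible flip sequence does. I would argue by the following mechanism: when $i_1$ is a source of $Q_{\tilde T}$, the corresponding arc $d_{i_1}$ is, in the triangulation, the "clockwise-most" arc at each of its endpoints among the arcs of the triangle structure, so flipping it in its quadrilateral slides that arc clockwise past one marked point on each boundary — precisely, it replaces $d_{i_1} = [a_\partial, b_{\partial'}]$ by $[(a-1)_\partial, (b-1)_{\partial'}]$ (or the analogous peripheral replacement). After this flip, $i_1$ has become a sink of $\sigma_{i_1}Q_{\tilde T}$ and the admissibility hypothesis guarantees $i_2$ is now a source, and one repeats. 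Doing this for all $n = p+q$ vertices in admissible order, each arc gets flipped exactly once, and I would check (by the local picture, examining the two triangles adjacent to each arc at the moment it is flipped) that each flip translates that arc's endpoints by $-1$ on whichever boundary components they lie on. Because the order is admissible, at the moment $d_{i_p}$ is flipped its two neighbouring arcs in the relevant quadrilateral are exactly the arcs that have not yet been flipped, which pins down the geometry of the flip and shows the translation is consistent; after all $n$ flips the whole triangulation has been uniformly shifted, giving $\cox_{\ivec}(\tilde T)$ the stated form. It also follows that the result is again a bridging triangulation (combinatorially isomorphic to $\tilde T$, consistent with $\cox_{\ivec}(Q) \cong Q$).

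The main obstacle I anticipate is the bookkeeping at self-folded or near-degenerate configurations and, more seriously, verifying that the admissible order really does force each arc's flip to be the "clockwise slide by one" rather than some other flip: one must rule out that flipping $d_{i_p}$ could move an endpoint by $0$ or by $-2$ on one side. I would handle this by inducting on $p$ (the position in the admissible order), maintaining the invariant that after flipping $d_{i_1}, \dots, d_{i_p}$, the arcs $d_{i_1}, \dots, d_{i_p}$ have all been translated by $-1$ while $d_{i_{p+1}}, \dots, d_{i_n}$ are untouched, and that the new triangle containing the freshly flipped arc together with two old arcs has the combinatorics needed for the next source to still be a "clockwise-most" arc. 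A clean way to organize this is to compare the flip sequence with one full application of $D_z^+$: by Section 4, $D_z^+$ shifts the inner boundary clockwise by $2\pi$, which in the universal cover is exactly the endpoint shift by $-1$ on $\partial'$ (and, after re-identification, $-1$ on $\partial$), and a bridging triangulation together with its image under $D_z^+$ differ by exactly the flip sequence encoded by an admissible ordering — so the lemma reduces to the statement that the Coxeter flip sequence realizes $D_z^+$ on bridging triangulations, which can be checked on the local move in Figure~\ref{dehn_twist}.
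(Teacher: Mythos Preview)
Your core local argument---that when $i_p$ is a source in the current quiver, the flip of $d_{i_p} = [j_\partial, k_{\partial'}]$ replaces it by $[(j-1)_\partial, (k-1)_{\partial'}]$---is exactly the paper's proof; the paper simply draws the relevant quadrilateral and reads off the result. Two of your supporting claims are incorrect, however. The assertion that the two neighbouring arcs of $d_{i_p}$ in its quadrilateral are ``exactly the arcs that have not yet been flipped'' fails already in small examples (if $i_1 \to i_2$ in $Q_{\tilde T}$, then when $d_{i_2}$ is flipped one of its neighbours is the already-shifted $d_{i_1}'$); what you actually need, and what the paper uses, is only that the source condition in the \emph{current} quiver forces the quadrilateral to have the stated shape, the intermediate triangulation remaining bridging throughout. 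Your Dehn-twist alternative is also wrong and would be circular here: a single $D_z^+$ rotates $\partial'$ by a full $2\pi$, shifting $\partial'$-labels by $q$ (not by $1$) and fixing $\partial$, so one Coxeter transformation is not one Dehn twist; the actual relation $\cox^m = D_z^{r+s}$ is Theorem~\ref{th:comm}, which the paper proves \emph{using} this lemma.
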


\begin{proof}
Let $\tilde{T}$ be a bridging triangulation, and $i_1 \ldots i_n$ an admissible ordering of the vertices of the associated quiver $Q_{\tilde{T}}$. Then for every $p = 2, \ldots, n$, vertex $i_p$ is a source in the quiver $\sigma_{i_{p-1}} \ldots \sigma_{i_1} Q$, and the corresponding arc $d_{i_p} = [j_\partial, k_{\partial'}]$ lies in such a quadrilateral in $\tilde{T}$:

\begin{center}
		\begin{tikzpicture}[scale = .4]
		\tikzstyle{every node} = [font = \tiny]
		\foreach \x in {0}
		{
			\foreach \y in {-8}
			{
				\draw[<-,thick] (\x-3.5,\y+4) -- (\x+3.5,\y+4);
				\draw[->,thick] (\x-3.5,\y-0) -- (\x+3.5,\y-0);

				\draw[thick] (\x-2,\y-0) -- (\x-2,\y+4);
				\draw[thick] (\x+2,\y-0) -- (\x-2,\y+4);
				\draw[thick] (\x+2,\y+0) -- (\x+2,\y+4);
				
				\foreach \t in {-2,2}
				{
					\fill (\x+\t,\y-0) circle (.1);
					\fill (\x+\t,\y+4) circle (.1);
				}
					
				\fill (\x-2,\y+4) node [above] {$k_{\partial'}$};
				\fill (\x+2,\y+4) node [above] {$(k-1)_{\partial'}$};
				\fill (\x-2,\y-0) node [below] {$(j-1)_{\partial}$};
				\fill (\x+2,\y-0) node [below] {$j_{\partial}$};	
				
				\fill (\x-0.25,\y+2.5) node [right] {$d_{i_p}$};	
		
			}
			
		}
\end{tikzpicture}
\end{center}

The flip corresponding to the mutation $\sigma_{i_p}$ is $\mu_{d_{i_p}}$. It replaces $d_{i_p}$ in $\mu_{i_{p-1}} \cdots \mu_{i_{1}}(T)$ with the other diagonal $d_{i_p}' = [(j-1)_\partial, (k-1)_{\partial'}]$ in the quadrilateral. This holds for all $1 \leq p \leq n$. Thus we have that the map $\cox_{\ivec}(\tilde{T})$ sends  $[j_\partial, k_{\partial'}]$ to  $ [(j-1)_\partial, (k-1)_{\partial'}]$. So the Coxeter transformation moves the endpoints of every arc in $\tilde{T}$ by $-1$ on each boundary component.
\end{proof}

The following corollary is a direct consequence of Lemma \ref{lemma:endpoints}:

\begin{corollary}
Let $\tilde{T}$ be a bridging triangulation with admissible ordering $\i$. Then $\cox_{\ivec}(\tilde{T})$ is independent of the choice of $\ivec$.
\end{corollary}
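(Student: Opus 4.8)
The plan is to read the corollary as an immediate consequence of Lemma~\ref{lemma:endpoints}: the lemma already tells us the \emph{effect} of $\cox_{\ivec}(\tilde T)$ on every arc of $\tilde T$, namely that the arc $[j_\partial, k_{\partial'}]$ is sent to $[(j-1)_\partial,(k-1)_{\partial'}]$, and this description does not involve $\ivec$ at all. So the proof amounts to observing that a map on triangulations is determined by its values on the arcs of $\tilde T$ together with the fact that the output is again a triangulation.

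First I would fix a bridging triangulation $\tilde T$ and note that, since $\tilde T$ consists entirely of bridging arcs, every arc of $\tilde T$ has exactly one endpoint on $\partial$ and one on $\partial'$, so it is of the form $[j_\partial,k_{\partial'}]$ in the universal-cover labelling. Next I would take two admissible orderings $\ivec = i_1,\dots,i_n$ and $\ivec' = i_1',\dots,i_n'$ of $Q_{\tilde T}$. By Lemma~\ref{lemma:endpoints} applied to $\ivec$, the arc set $\cox_{\ivec}(\tilde T)$ is precisely $\{[(j-1)_\partial,(k-1)_{\partial'}] : [j_\partial,k_{\partial'}]\in\tilde T\}$; applying the same lemma to $\ivec'$ gives that $\cox_{\ivec'}(\tilde T)$ is the \emph{same} set of arcs. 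Since an (asymptotic-free) triangulation of $C_{p,q}$ is just its underlying collection of arcs, we conclude $\cox_{\ivec}(\tilde T)=\cox_{\ivec'}(\tilde T)$ as triangulations.

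The one point that deserves a sentence of care — and the only place the argument could slip — is the passage from ``same endpoints on each boundary'' to ``same arc of $C_{p,q}$''. On the annulus two arcs with the same endpoints need not be isotopic (they can differ by how many times they wind around $z$), so in principle the statement ``moves endpoints by $-1$'' underdetermines the arc. Here this is not an issue because the flip in the proof of Lemma~\ref{lemma:endpoints} is performed locally inside a fixed quadrilateral of $\tilde T$ drawn in the universal cover, so the winding number is pinned down: each arc $[j_\partial,k_{\partial'}]$ of $\tilde T$ is carried to the unique arc $[(j-1)_\partial,(k-1)_{\partial'}]$ lying in the same homotopy class shifted by the frame translation, i.e. $\cox_{\ivec}$ acts as the (inverse) Dehn twist / shift-by-one on the universal cover, which is manifestly independent of $\ivec$. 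I would make this explicit by phrasing the conclusion of Lemma~\ref{lemma:endpoints} in the universal cover, so that the arc produced is genuinely unique, and then the corollary follows with no further work. I expect no real obstacle; the proof is a two-line deduction once the lemma is granted, and the only thing to be vigilant about is not to conflate ``same endpoints'' with ``same arc'' without invoking the local/universal-cover description.
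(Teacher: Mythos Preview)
Your proposal is correct and matches the paper's own treatment: the paper simply states that the corollary is a direct consequence of Lemma~\ref{lemma:endpoints} without further argument. Your added care about distinguishing ``same endpoints'' from ``same arc'' via the universal-cover description is a useful clarification that the paper leaves implicit.
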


We will thus simply write $\cox(\tilde{T})$ for the Coxeter transformation of a bridging triangulation $\tilde{T}$.

\begin{example}
Let $T$ be the following triangulation of $C_{3,3}$ drawn in black, with labeled arcs. For convenience and clarity, we draw in a second copy of $T$ in gray.
\begin{figure}[h!]
	\begin{center}
	\begin{tikzpicture}[scale = .5]
		\tikzstyle{every node} = [font = \small]
		\foreach \x in {0}
		{
			\foreach \y in {-8}
			{
				\draw[<-] (\x-9,\y+4) -- (\x+9,\y+4);
				\fill (\x-10,\y+4) node {$\partial'$};
				\draw[->] (\x-9,\y-3) -- (\x+9,\y-3);
				\fill (\x-10,\y-3) node {$\partial$};

				\foreach \t in {-8,-6,-4,-2,0,2,4,6, 8}
				{
					\fill (\x+\t,\y+4) circle (.1);
					\fill (\x+\t,\y-3) circle (.1);
				}
				
				\fill (\x-8,\y+4) node [above] {$1_{\partial'}$};
				\fill (\x-6,\y+4) node [above] {$0_{\partial'}$};
				\fill (\x-4,\y+4) node [above] {$-1_{\partial'}$};
				\fill (\x-2,\y+4) node [above] {$-2_{\partial'}$};
				\fill (\x+0,\y+4) node [above] {$-3_{\partial'}$};
				\fill (\x+2,\y+4) node [above] {$-4_{\partial'}$};
				\fill (\x+4,\y+4) node [above] {$-5_{\partial'}$};
				\fill (\x+6,\y+4) node [above] {$-6_{\partial'}$};
				\fill (\x+8,\y+4) node [above] {$-7_{\partial'}$};

				\fill (\x-8,\y-3) node [below] {$-1_{\partial}$};
				\fill (\x-6,\y-3) node [below] {$0_{\partial}$};
				\fill (\x-4,\y-3) node [below] {$1_{\partial}$};
				\fill (\x-2,\y-3) node [below] {$2_{\partial}$};
				\fill (\x+0,\y-3) node [below] {$3_{\partial}$};
				\fill (\x+2,\y-3) node [below] {$4_{\partial}$};
				\fill (\x+4,\y-3) node [below] {$5_{\partial}$};
				\fill (\x+6,\y-3) node [below] {$6_{\partial}$};
				\fill (\x+8,\y-3) node [below] {$7_{\partial}$};

				\fill[] (\x-6,\y-1.5) node [left] {$d_1$};
				\fill[] (\x-4.5,\y-1.5) node [left] {$d_2$};
				\fill[] (\x-4,\y+2) node [left] {$d_3$};
				\fill[] (\x-2.5,\y+2) node [left] {$d_4$};
				\fill[] (\x-1,\y+2) node [left] {$d_5$};
				\fill[] (\x-1.5,\y-1.5) node [left] {$d_6$};
				\fill[] (\x-0,\y-1.5) node [left] {$d_1$};
				
				\fill[gray] (\x+1.5,\y-1.5) node [left] {$d_2$};
				\fill[gray] (\x+2,\y+2) node [left] {$d_3$};
				\fill[gray] (\x+3.5,\y+2) node [left] {$d_4$};
				\fill[gray] (\x+5,\y+2) node [left] {$d_5$};
				\fill[gray] (\x+4.5,\y-1.5) node [left] {$d_6$};
				\fill[gray] (\x+6,\y-1.5) node [left] {$d_1$};
				
				\draw[] (\x-6,\y+4) -- (\x-6,\y-3);
				\draw[] (\x-6,\y+4) -- (\x-4,\y-3);
				\draw[] (\x-4,\y+4) -- (\x-4,\y-3);
				\draw[] (\x-2,\y+4) -- (\x-4,\y-3);
				\draw[] (\x-0,\y+4) -- (\x-4,\y-3);
				\draw[] (\x-0,\y+4) -- (\x-2,\y-3);
				\draw[] (\x-0,\y+4) -- (\x-0,\y-3);
				
				\draw[gray] (\x-0,\y+4) -- (\x-0,\y-3);
				\draw[gray] (\x-0,\y+4) -- (\x+2,\y-3);
				\draw[gray] (\x+2,\y+4) -- (\x+2,\y-3);
				\draw[gray] (\x+4,\y+4) -- (\x+2,\y-3);
				\draw[gray] (\x+6,\y+4) -- (\x+2,\y-3);
				\draw[gray] (\x+6,\y+4) -- (\x+4,\y-3);
				\draw[gray] (\x+6,\y+4) -- (\x+6,\y-3);
				
			}
		}
		\end{tikzpicture}
	\end{center}
\end{figure}

The associated quiver $Q_T$:

\begin{figure}[h!]
\begin{center}
	\begin{tikzpicture}[scale = .35]
		\tikzstyle{every node} = [font = \small]
		\foreach \x in {0}
		{
			\foreach \y in {-8}
			{
			\fill (\x-6,\y) circle (.15);
			\fill (\x-6,\y) node [left] {$1 \,$};
			\fill (\x-4,\y-3) circle (.15);
			\fill (\x-4,\y-3) node [below] {$6 \,\,$};
			\fill(\x,\y-3) circle (.15);
			\fill (\x,\y-3) node [below] {$\,\, 5 $};
			\fill(\x+2,\y) circle (.15);
			\fill (\x+2,\y) node [right] {$\,4 $};
			\fill(\x,\y+3) circle (.15);
			\fill (\x,\y+3) node [above] {$\,\, 3 $};
			\fill (\x-4,\y+3) circle (.15);
			\fill (\x-4,\y+3) node [above] {$2\,\,$};
			
			\draw [->,thick] (\x-6,\y)--(\x-4,\y-2.9); 
			\draw [<-,thick] (\x-.1,\y-3) -- (\x-4,\y-3); 
			\draw [->,thick] (\x+2,\y)--(\x,\y-2.9); 
			\draw [->,thick] (\x,\y+2.9) -- (\x+2,\y); 
			\draw [->,thick] (\x-4,\y+3)--(\x-.1,\y+3); 
			\draw [<-,thick] (\x-6,\y+.1) -- (\x-4,\y+3); 

			}
		}
	\end{tikzpicture}
	\end{center}
	\end{figure} 
	
We use this quiver to obtain an admissible ordering of the vertices (and therefore also of arcs). Going from sources to sinks, we have an ordering $\ivec = 2,3,1,6,4,5$. So we will perform flips in the order $\mu_5\mu_4\mu_6\mu_1\mu_3\mu_2T$:

\begin{figure}[h!]
	\begin{center}
	\begin{tikzpicture}[scale = .5]
		\tikzstyle{every node} = [font = \small]
		\foreach \x in {0}
		{
			\foreach \y in {-8}
			{
				\draw[<-] (\x-9,\y+4) -- (\x+9,\y+4);
				\fill (\x-10,\y+4) node {$\partial'$};
				\draw[->] (\x-9,\y-3) -- (\x+9,\y-3);
				\fill (\x-10,\y-3) node {$\partial$};

				\foreach \t in {-8,-6,-4,-2,0,2,4,6, 8}
				{
					\fill (\x+\t,\y+4) circle (.1);
					\fill (\x+\t,\y-3) circle (.1);
				}
				
				\fill (\x-8,\y+4) node [above] {$1_{\partial'}$};
				\fill (\x-6,\y+4) node [above] {$0_{\partial'}$};
				\fill (\x-4,\y+4) node [above] {$-1_{\partial'}$};
				\fill (\x-2,\y+4) node [above] {$-2_{\partial'}$};
				\fill (\x+0,\y+4) node [above] {$-3_{\partial'}$};
				\fill (\x+2,\y+4) node [above] {$-4_{\partial'}$};
				\fill (\x+4,\y+4) node [above] {$-5_{\partial'}$};
				\fill (\x+6,\y+4) node [above] {$-6_{\partial'}$};
				\fill (\x+8,\y+4) node [above] {$-7_{\partial'}$};

				\fill (\x-8,\y-3) node [below] {$-1_{\partial}$};
				\fill (\x-6,\y-3) node [below] {$0_{\partial}$};
				\fill (\x-4,\y-3) node [below] {$1_{\partial}$};
				\fill (\x-2,\y-3) node [below] {$2_{\partial}$};
				\fill (\x+0,\y-3) node [below] {$3_{\partial}$};
				\fill (\x+2,\y-3) node [below] {$4_{\partial}$};
				\fill (\x+4,\y-3) node [below] {$5_{\partial}$};
				\fill (\x+6,\y-3) node [below] {$6_{\partial}$};
				\fill (\x+8,\y-3) node [below] {$7_{\partial}$};

				\fill[] (\x-6,\y-1.5) node [left] {$d_1$};
				\fill[green] (\x-4.5,\y+2) node [left] {$d'_2$};
				\fill[] (\x-4,\y-1.5) node [left] {$d_3$};
				\fill[] (\x-2.5,\y+2) node [left] {$d_4$};
				\fill[] (\x-1,\y+2) node [left] {$d_5$};
				\fill[] (\x-1.5,\y-1.5) node [left] {$d_6$};
				\fill[] (\x-0,\y-1.5) node [left] {$d_1$};
				
				\fill[green] (\x+1.5,\y+2) node [left] {$d'_2$};
				\fill[gray] (\x+2,\y-1.5) node [left] {$d_3$};
				\fill[gray] (\x+3.5,\y+2) node [left] {$d_4$};
				\fill[gray] (\x+5,\y+2) node [left] {$d_5$};
				\fill[gray] (\x+4.5,\y-1.5) node [left] {$d_6$};
				\fill[gray] (\x+6,\y-1.5) node [left] {$d_1$};
				
				\draw[] (\x-6,\y+4) -- (\x-6,\y-3);
				\draw[green,thick] (\x-4,\y+4) -- (\x-6,\y-3);
				\draw[] (\x-4,\y+4) -- (\x-4,\y-3);
				\draw[] (\x-2,\y+4) -- (\x-4,\y-3);
				\draw[] (\x-0,\y+4) -- (\x-4,\y-3);
				\draw[] (\x-0,\y+4) -- (\x-2,\y-3);
				\draw[] (\x-0,\y+4) -- (\x-0,\y-3);
				
				\draw[gray] (\x-0,\y+4) -- (\x-0,\y-3);
				\draw[green] (\x+2,\y+4) -- (\x,\y-3);
				\draw[gray] (\x+2,\y+4) -- (\x+2,\y-3);
				\draw[gray] (\x+4,\y+4) -- (\x+2,\y-3);
				\draw[gray] (\x+6,\y+4) -- (\x+2,\y-3);
				\draw[gray] (\x+6,\y+4) -- (\x+4,\y-3);
				\draw[gray] (\x+6,\y+4) -- (\x+6,\y-3);
				
			}
		}
		\end{tikzpicture}
	\end{center}
\end{figure}

\begin{figure}[h!]
	\begin{center}
	\begin{tikzpicture}[scale = .5]
		\tikzstyle{every node} = [font = \small]
		\foreach \x in {0}
		{
			\foreach \y in {-8}
			{
				\draw[<-] (\x-9,\y+4) -- (\x+9,\y+4);
				\fill (\x-10,\y+4) node {$\partial'$};
				\draw[->] (\x-9,\y-3) -- (\x+9,\y-3);
				\fill (\x-10,\y-3) node {$\partial$};

				\foreach \t in {-8,-6,-4,-2,0,2,4,6, 8}
				{
					\fill (\x+\t,\y+4) circle (.1);
					\fill (\x+\t,\y-3) circle (.1);
				}
				
				\fill (\x-8,\y+4) node [above] {$1_{\partial'}$};
				\fill (\x-6,\y+4) node [above] {$0_{\partial'}$};
				\fill (\x-4,\y+4) node [above] {$-1_{\partial'}$};
				\fill (\x-2,\y+4) node [above] {$-2_{\partial'}$};
				\fill (\x+0,\y+4) node [above] {$-3_{\partial'}$};
				\fill (\x+2,\y+4) node [above] {$-4_{\partial'}$};
				\fill (\x+4,\y+4) node [above] {$-5_{\partial'}$};
				\fill (\x+6,\y+4) node [above] {$-6_{\partial'}$};
				\fill (\x+8,\y+4) node [above] {$-7_{\partial'}$};

				\fill (\x-8,\y-3) node [below] {$-1_{\partial}$};
				\fill (\x-6,\y-3) node [below] {$0_{\partial}$};
				\fill (\x-4,\y-3) node [below] {$1_{\partial}$};
				\fill (\x-2,\y-3) node [below] {$2_{\partial}$};
				\fill (\x+0,\y-3) node [below] {$3_{\partial}$};
				\fill (\x+2,\y-3) node [below] {$4_{\partial}$};
				\fill (\x+4,\y-3) node [below] {$5_{\partial}$};
				\fill (\x+6,\y-3) node [below] {$6_{\partial}$};
				\fill (\x+8,\y-3) node [below] {$7_{\partial}$};

				\fill[] (\x-6,\y-1.5) node [left] {$d_1$};
				\fill[] (\x-4.5,\y+2) node [left] {$d'_2$};
				\fill[green] (\x-3,\y+2) node [left] {$d'_3$};
				\fill[] (\x-3.5,\y-1.5) node [left] {$d_4$};
				\fill[] (\x-1,\y+2) node [left] {$d_5$};
				\fill[] (\x-1.5,\y-1.5) node [left] {$d_6$};
				\fill[] (\x-0,\y-1.5) node [left] {$d_1$};
				
				\fill[gray] (\x+1.5,\y+2) node [left] {$d'_2$};
				\fill[green] (\x+3,\y+2) node [left] {$d'_3$};
				\fill[gray] (\x+2.5,\y-1.5) node [left] {$d_4$};
				\fill[gray] (\x+5,\y+2) node [left] {$d_5$};
				\fill[gray] (\x+4.5,\y-1.5) node [left] {$d_6$};
				\fill[gray] (\x+6,\y-1.5) node [left] {$d_1$};
				
				\draw[] (\x-6,\y+4) -- (\x-6,\y-3);
				\draw[] (\x-4,\y+4) -- (\x-6,\y-3);
				\draw[green,thick] (\x-2,\y+4) -- (\x-6,\y-3);
				\draw[] (\x-2,\y+4) -- (\x-4,\y-3);
				\draw[] (\x-0,\y+4) -- (\x-4,\y-3);
				\draw[] (\x-0,\y+4) -- (\x-2,\y-3);
				\draw[] (\x-0,\y+4) -- (\x-0,\y-3);
				
				\draw[gray] (\x-0,\y+4) -- (\x-0,\y-3);
				\draw[gray] (\x+2,\y+4) -- (\x,\y-3);
				\draw[green] (\x+4,\y+4) -- (\x+0,\y-3);
				\draw[gray] (\x+4,\y+4) -- (\x+2,\y-3);
				\draw[gray] (\x+6,\y+4) -- (\x+2,\y-3);
				\draw[gray] (\x+6,\y+4) -- (\x+4,\y-3);
				\draw[gray] (\x+6,\y+4) -- (\x+6,\y-3);
				
			}
		}
		\end{tikzpicture}
	\end{center}
\end{figure}

\begin{figure}[h!]
	\begin{center}
	\begin{tikzpicture}[scale = .5]
		\tikzstyle{every node} = [font = \small]
		\foreach \x in {0}
		{
			\foreach \y in {-8}
			{
				\draw[<-] (\x-9,\y+4) -- (\x+9,\y+4);
				\fill (\x-10,\y+4) node {$\partial'$};
				\draw[->] (\x-9,\y-3) -- (\x+9,\y-3);
				\fill (\x-10,\y-3) node {$\partial$};

				\foreach \t in {-8,-6,-4,-2,0,2,4,6, 8}
				{
					\fill (\x+\t,\y+4) circle (.1);
					\fill (\x+\t,\y-3) circle (.1);
				}
				
				\fill (\x-8,\y+4) node [above] {$1_{\partial'}$};
				\fill (\x-6,\y+4) node [above] {$0_{\partial'}$};
				\fill (\x-4,\y+4) node [above] {$-1_{\partial'}$};
				\fill (\x-2,\y+4) node [above] {$-2_{\partial'}$};
				\fill (\x+0,\y+4) node [above] {$-3_{\partial'}$};
				\fill (\x+2,\y+4) node [above] {$-4_{\partial'}$};
				\fill (\x+4,\y+4) node [above] {$-5_{\partial'}$};
				\fill (\x+6,\y+4) node [above] {$-6_{\partial'}$};
				\fill (\x+8,\y+4) node [above] {$-7_{\partial'}$};

				\fill (\x-8,\y-3) node [below] {$-1_{\partial}$};
				\fill (\x-6,\y-3) node [below] {$0_{\partial}$};
				\fill (\x-4,\y-3) node [below] {$1_{\partial}$};
				\fill (\x-2,\y-3) node [below] {$2_{\partial}$};
				\fill (\x+0,\y-3) node [below] {$3_{\partial}$};
				\fill (\x+2,\y-3) node [below] {$4_{\partial}$};
				\fill (\x+4,\y-3) node [below] {$5_{\partial}$};
				\fill (\x+6,\y-3) node [below] {$6_{\partial}$};
				\fill (\x+8,\y-3) node [below] {$7_{\partial}$};

				\fill[green] (\x-5,\y+2) node [left] {$d'_1$};
				\fill[] (\x-5.5,\y-1.5) node [left] {$d'_2$};
				\fill[] (\x-3,\y+2) node [left] {$d'_3$};
				\fill[] (\x-3.5,\y-1.5) node [left] {$d_4$};
				\fill[] (\x-1,\y+2) node [left] {$d_5$};
				\fill[] (\x-1.5,\y-1.5) node [left] {$d_6$};
				\fill[green] (\x+1,\y+2) node [left] {$d'_1$};
				
				\fill[gray] (\x+0.5,\y-1.5) node [left] {$d'_2$};
				\fill[gray] (\x+3,\y+2) node [left] {$d'_3$};
				\fill[gray] (\x+2.5,\y-1.5) node [left] {$d_4$};
				\fill[gray] (\x+5,\y+2) node [left] {$d_5$};
				\fill[gray] (\x+4.5,\y-1.5) node [left] {$d_6$};
				\fill[green] (\x+7,\y+2) node [left] {$d'_1$};
				
				\draw[green,thick] (\x-4,\y+4) -- (\x-8,\y-3);
				\draw[] (\x-4,\y+4) -- (\x-6,\y-3);
				\draw[] (\x-2,\y+4) -- (\x-6,\y-3);
				\draw[] (\x-2,\y+4) -- (\x-4,\y-3);
				\draw[] (\x-0,\y+4) -- (\x-4,\y-3);
				\draw[] (\x-0,\y+4) -- (\x-2,\y-3);
				\draw[green,thick] (\x+2,\y+4) -- (\x-2,\y-3);
				
				\draw[gray] (\x+2,\y+4) -- (\x,\y-3);
				\draw[gray] (\x+4,\y+4) -- (\x+0,\y-3);
				\draw[gray] (\x+4,\y+4) -- (\x+2,\y-3);
				\draw[gray] (\x+6,\y+4) -- (\x+2,\y-3);
				\draw[gray] (\x+6,\y+4) -- (\x+4,\y-3);
				\draw[green] (\x+8,\y+4) -- (\x+4,\y-3);
				
			}
		}
		\end{tikzpicture}
	\end{center}
\end{figure}

\begin{figure}[h!]
	\begin{center}
	\begin{tikzpicture}[scale = .5]
		\tikzstyle{every node} = [font = \small]
		\foreach \x in {0}
		{
			\foreach \y in {-8}
			{
				\draw[<-] (\x-9,\y+4) -- (\x+9,\y+4);
				\fill (\x-10,\y+4) node {$\partial'$};
				\draw[->] (\x-9,\y-3) -- (\x+9,\y-3);
				\fill (\x-10,\y-3) node {$\partial$};

				\foreach \t in {-8,-6,-4,-2,0,2,4,6, 8}
				{
					\fill (\x+\t,\y+4) circle (.1);
					\fill (\x+\t,\y-3) circle (.1);
				}
				
				\fill (\x-8,\y+4) node [above] {$1_{\partial'}$};
				\fill (\x-6,\y+4) node [above] {$0_{\partial'}$};
				\fill (\x-4,\y+4) node [above] {$-1_{\partial'}$};
				\fill (\x-2,\y+4) node [above] {$-2_{\partial'}$};
				\fill (\x+0,\y+4) node [above] {$-3_{\partial'}$};
				\fill (\x+2,\y+4) node [above] {$-4_{\partial'}$};
				\fill (\x+4,\y+4) node [above] {$-5_{\partial'}$};
				\fill (\x+6,\y+4) node [above] {$-6_{\partial'}$};
				\fill (\x+8,\y+4) node [above] {$-7_{\partial'}$};

				\fill (\x-8,\y-3) node [below] {$-1_{\partial}$};
				\fill (\x-6,\y-3) node [below] {$0_{\partial}$};
				\fill (\x-4,\y-3) node [below] {$1_{\partial}$};
				\fill (\x-2,\y-3) node [below] {$2_{\partial}$};
				\fill (\x+0,\y-3) node [below] {$3_{\partial}$};
				\fill (\x+2,\y-3) node [below] {$4_{\partial}$};
				\fill (\x+4,\y-3) node [below] {$5_{\partial}$};
				\fill (\x+6,\y-3) node [below] {$6_{\partial}$};
				\fill (\x+8,\y-3) node [below] {$7_{\partial}$};

				\fill[] (\x-7,\y-1.5) node [left] {$d'_1$};
				\fill[] (\x-5.5,\y-1.5) node [left] {$d'_2$};
				\fill[] (\x-3,\y+2) node [left] {$d'_3$};
				\fill[] (\x-3.5,\y-1.5) node [left] {$d_4$};
				\fill[] (\x-1,\y+2) node [left] {$d_5$};
				\fill[green] (\x+0.25,\y+2) node [left] {$d'_6$};
				\fill[] (\x-1,\y-1.5) node [left] {$d'_1$};
				
				\fill[gray] (\x+0.5,\y-1.5) node [left] {$d'_2$};
				\fill[gray] (\x+3,\y+2) node [left] {$d'_3$};
				\fill[gray] (\x+2.5,\y-1.5) node [left] {$d_4$};
				\fill[gray] (\x+5,\y+2) node [left] {$d_5$};
				\fill[green] (\x+6.25,\y+2) node [left] {$d'_6$};
				\fill[gray] (\x+5,\y-1.5) node [left] {$d'_1$};
				
				\draw[] (\x-4,\y+4) -- (\x-8,\y-3);
				\draw[] (\x-4,\y+4) -- (\x-6,\y-3);
				\draw[] (\x-2,\y+4) -- (\x-6,\y-3);
				\draw[] (\x-2,\y+4) -- (\x-4,\y-3);
				\draw[] (\x-0,\y+4) -- (\x-4,\y-3);
				\draw[green,thick] (\x+2,\y+4) -- (\x-4,\y-3);
				\draw[] (\x+2,\y+4) -- (\x-2,\y-3);
				
				\draw[gray] (\x+2,\y+4) -- (\x,\y-3);
				\draw[gray] (\x+4,\y+4) -- (\x+0,\y-3);
				\draw[gray] (\x+4,\y+4) -- (\x+2,\y-3);
				\draw[gray] (\x+6,\y+4) -- (\x+2,\y-3);
				\draw[green] (\x+8,\y+4) -- (\x+2,\y-3);
				\draw[gray] (\x+8,\y+4) -- (\x+4,\y-3);
				
			}
		}
		\end{tikzpicture}
	\end{center}
\end{figure}

\begin{figure}[h!]
	\begin{center}
	\begin{tikzpicture}[scale = .5]
		\tikzstyle{every node} = [font = \small]
		\foreach \x in {0}
		{
			\foreach \y in {-8}
			{
				\draw[<-] (\x-9,\y+4) -- (\x+9,\y+4);
				\fill (\x-10,\y+4) node {$\partial'$};
				\draw[->] (\x-9,\y-3) -- (\x+9,\y-3);
				\fill (\x-10,\y-3) node {$\partial$};

				\foreach \t in {-8,-6,-4,-2,0,2,4,6, 8}
				{
					\fill (\x+\t,\y+4) circle (.1);
					\fill (\x+\t,\y-3) circle (.1);
				}
				
				\fill (\x-8,\y+4) node [above] {$1_{\partial'}$};
				\fill (\x-6,\y+4) node [above] {$0_{\partial'}$};
				\fill (\x-4,\y+4) node [above] {$-1_{\partial'}$};
				\fill (\x-2,\y+4) node [above] {$-2_{\partial'}$};
				\fill (\x+0,\y+4) node [above] {$-3_{\partial'}$};
				\fill (\x+2,\y+4) node [above] {$-4_{\partial'}$};
				\fill (\x+4,\y+4) node [above] {$-5_{\partial'}$};
				\fill (\x+6,\y+4) node [above] {$-6_{\partial'}$};
				\fill (\x+8,\y+4) node [above] {$-7_{\partial'}$};

				\fill (\x-8,\y-3) node [below] {$-1_{\partial}$};
				\fill (\x-6,\y-3) node [below] {$0_{\partial}$};
				\fill (\x-4,\y-3) node [below] {$1_{\partial}$};
				\fill (\x-2,\y-3) node [below] {$2_{\partial}$};
				\fill (\x+0,\y-3) node [below] {$3_{\partial}$};
				\fill (\x+2,\y-3) node [below] {$4_{\partial}$};
				\fill (\x+4,\y-3) node [below] {$5_{\partial}$};
				\fill (\x+6,\y-3) node [below] {$6_{\partial}$};
				\fill (\x+8,\y-3) node [below] {$7_{\partial}$};

				\fill[] (\x-7,\y-1.5) node [left] {$d'_1$};
				\fill[] (\x-5.5,\y-1.5) node [left] {$d'_2$};
				\fill[] (\x-3,\y+2) node [left] {$d'_3$};
				\fill[green] (\x-1.75,\y+2) node [left] {$d'_4$};
				\fill[] (\x-3,\y-1.5) node [left] {$d_5$};
				\fill[] (\x+0.25,\y+2) node [left] {$d'_6$};
				\fill[] (\x-1,\y-1.5) node [left] {$d'_1$};
				
				\fill[gray] (\x+0.5,\y-1.5) node [left] {$d'_2$};
				\fill[gray] (\x+3,\y+2) node [left] {$d'_3$};
				\fill[green] (\x+4.25,\y+2) node [left] {$d'_4$};
				\fill[gray] (\x+3,\y-1.5) node [left] {$d_5$};
				\fill[gray] (\x+6.25,\y+2) node [left] {$d'_6$};
				\fill[gray] (\x+5,\y-1.5) node [left] {$d'_1$};
				
				\draw[] (\x-4,\y+4) -- (\x-8,\y-3);
				\draw[] (\x-4,\y+4) -- (\x-6,\y-3);
				\draw[] (\x-2,\y+4) -- (\x-6,\y-3);
				\draw[green,thick] (\x,\y+4) -- (\x-6,\y-3);
				\draw[] (\x-0,\y+4) -- (\x-4,\y-3);
				\draw[] (\x+2,\y+4) -- (\x-4,\y-3);
				\draw[] (\x+2,\y+4) -- (\x-2,\y-3);
				
				\draw[gray] (\x+2,\y+4) -- (\x,\y-3);
				\draw[gray] (\x+4,\y+4) -- (\x+0,\y-3);
				\draw[green] (\x+6,\y+4) -- (\x,\y-3);
				\draw[gray] (\x+6,\y+4) -- (\x+2,\y-3);
				\draw[gray] (\x+8,\y+4) -- (\x+2,\y-3);
				\draw[gray] (\x+8,\y+4) -- (\x+4,\y-3);
				
			}
		}
		\end{tikzpicture}
	\end{center}
\end{figure}

\begin{figure}[h!]
	\begin{center}
	\begin{tikzpicture}[scale = .5]
		\tikzstyle{every node} = [font = \small]
		\foreach \x in {0}
		{
			\foreach \y in {-8}
			{
				\draw[<-] (\x-9,\y+4) -- (\x+9,\y+4);
				\fill (\x-10,\y+4) node {$\partial'$};
				\draw[->] (\x-9,\y-3) -- (\x+9,\y-3);
				\fill (\x-10,\y-3) node {$\partial$};

				\foreach \t in {-8,-6,-4,-2,0,2,4,6, 8}
				{
					\fill (\x+\t,\y+4) circle (.1);
					\fill (\x+\t,\y-3) circle (.1);
				}
				
				\fill (\x-8,\y+4) node [above] {$1_{\partial'}$};
				\fill (\x-6,\y+4) node [above] {$0_{\partial'}$};
				\fill (\x-4,\y+4) node [above] {$-1_{\partial'}$};
				\fill (\x-2,\y+4) node [above] {$-2_{\partial'}$};
				\fill (\x+0,\y+4) node [above] {$-3_{\partial'}$};
				\fill (\x+2,\y+4) node [above] {$-4_{\partial'}$};
				\fill (\x+4,\y+4) node [above] {$-5_{\partial'}$};
				\fill (\x+6,\y+4) node [above] {$-6_{\partial'}$};
				\fill (\x+8,\y+4) node [above] {$-7_{\partial'}$};

				\fill (\x-8,\y-3) node [below] {$-1_{\partial}$};
				\fill (\x-6,\y-3) node [below] {$0_{\partial}$};
				\fill (\x-4,\y-3) node [below] {$1_{\partial}$};
				\fill (\x-2,\y-3) node [below] {$2_{\partial}$};
				\fill (\x+0,\y-3) node [below] {$3_{\partial}$};
				\fill (\x+2,\y-3) node [below] {$4_{\partial}$};
				\fill (\x+4,\y-3) node [below] {$5_{\partial}$};
				\fill (\x+6,\y-3) node [below] {$6_{\partial}$};
				\fill (\x+8,\y-3) node [below] {$7_{\partial}$};

				\fill[] (\x-7,\y-1.5) node [left] {$d'_1$};
				\fill[] (\x-5.5,\y-1.5) node [left] {$d'_2$};
				\fill[] (\x-3,\y+2) node [left] {$d'_3$};
				\fill[] (\x-1.75,\y+2) node [left] {$d'_4$};
				\fill[green] (\x-0.5,\y+2) node [left] {$d'_5$};
				\fill[] (\x-2.75,\y-1.5) node [left] {$d'_6$};
				\fill[] (\x-1,\y-1.5) node [left] {$d'_1$};
				
				\fill[gray] (\x+0.5,\y-1.5) node [left] {$d'_2$};
				\fill[gray] (\x+3,\y+2) node [left] {$d'_3$};
				\fill[gray] (\x+4.25,\y+2) node [left] {$d'_4$};
				\fill[green] (\x+5.5,\y+2) node [left] {$d'_5$};
				\fill[gray] (\x+3.25,\y-1.5) node [left] {$d'_6$};
				\fill[gray] (\x+5,\y-1.5) node [left] {$d'_1$};
				
				\draw[] (\x-4,\y+4) -- (\x-8,\y-3);
				\draw[] (\x-4,\y+4) -- (\x-6,\y-3);
				\draw[] (\x-2,\y+4) -- (\x-6,\y-3);
				\draw[] (\x,\y+4) -- (\x-6,\y-3);
				\draw[green,thick] (\x+2,\y+4) -- (\x-6,\y-3);
				\draw[] (\x+2,\y+4) -- (\x-4,\y-3);
				\draw[] (\x+2,\y+4) -- (\x-2,\y-3);
				
				\draw[gray] (\x+2,\y+4) -- (\x,\y-3);
				\draw[gray] (\x+4,\y+4) -- (\x+0,\y-3);
				\draw[gray] (\x+6,\y+4) -- (\x,\y-3);
				\draw[green] (\x+8,\y+4) -- (\x,\y-3);
				\draw[gray] (\x+8,\y+4) -- (\x+2,\y-3);
				\draw[gray] (\x+8,\y+4) -- (\x+4,\y-3);
				
			}
		}
		\end{tikzpicture}
	\end{center}
\caption{$\cox(T) = \mu_5\mu_4\mu_6\mu_1\mu_3\mu_2T$.}
\end{figure}
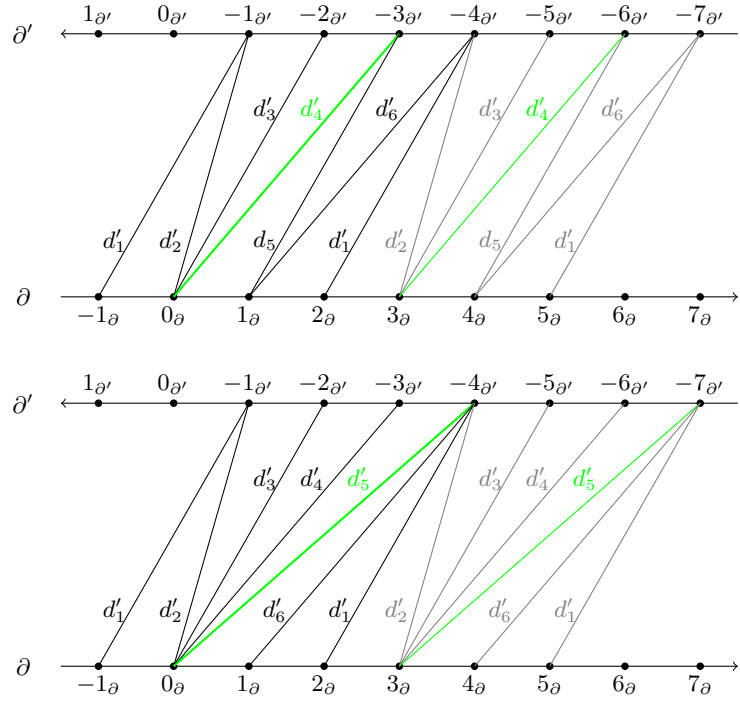
\label{cox_tran}
\end{example}
\clearpage

As shown in Lemma \ref{lemma:endpoints}, this Coxeter transformation shifts the endpoints of each arc in the triangulation by -1 on each boundary component. The same effect can be achieved by rotating the outer boundary component clockwise by $\frac{2\pi}{p}$, and the inner boundary component counter-clockwise by $\frac{2\pi}{q}$. In the example, $d_1 = [0_\partial, 0_{\partial'}] \mapsto [-1_\partial, -1_{\partial'}]$. Applying the Coxeter transformation $p$ times would result in moving the endpoints on $\partial$ a full turn in the clockwise direction. Similarly, applying the Coxeter transformation $q$ times would result in moving the endpoints on $\partial'$ a full turn around in the counter-clockwise direction. 

In the example above, $p = q = 3$, so applying the Coxeter transformation two more times ($\cox^2(\cox(T)) = \cox^3(T)$) would result in Figure \ref{two_cox_tran}.
\begin{figure}[h!]
	\begin{center}
	\begin{tikzpicture}[scale = .5]
		\tikzstyle{every node} = [font = \small]
		\foreach \x in {0}
		{
			\foreach \y in {-8}
			{
				\draw[<-] (\x-13,\y+4) -- (\x+7,\y+4);
				\fill (\x-15,\y+4) node {$\partial'$};
				\draw[->] (\x-13,\y-3) -- (\x+7,\y-3);
				\fill (\x-15,\y-3) node {$\partial$};

				\foreach \t in {-12,-10,-8,-6,-4,-2,0,2,4,6}
				{
					\fill (\x+\t,\y+4) circle (.1);
					\fill (\x+\t,\y-3) circle (.1);
				}
				
				\draw[dashed,gray] (\x-12,\y+4) -- (\x-12,\y-3);
				\draw[dashed,gray] (\x-6,\y+4) -- (\x-6,\y-3);
				\draw[dashed,gray] (\x-0,\y+4) -- (\x-0,\y-3);
				\draw[dashed,gray] (\x+6,\y+4) -- (\x+6,\y-3);

				\fill (\x-12,\y+4) node [above] {$3_{\partial'}$};
				\fill (\x-10,\y+4) node [above] {$2_{\partial'}$};
				\fill (\x-8,\y+4) node [above] {$1_{\partial'}$};
				\fill (\x-6,\y+4) node [above] {$0_{\partial'}$};
				\fill (\x-4,\y+4) node [above] {$-1_{\partial'}$};
				\fill (\x-2,\y+4) node [above] {$-2_{\partial'}$};
				\fill (\x+0,\y+4) node [above] {$-3_{\partial'}$};
				\fill (\x+2,\y+4) node [above] {$-4_{\partial'}$};
				\fill (\x+4,\y+4) node [above] {$-5_{\partial'}$};
				\fill (\x+6,\y+4) node [above] {$-6_{\partial'}$};

				\fill (\x-12,\y-3) node [below] {$-3_{\partial}$};
				\fill (\x-10,\y-3) node [below] {$-2_{\partial}$};
				\fill (\x-8,\y-3) node [below] {$-1_{\partial}$};
				\fill (\x-6,\y-3) node [below] {$0_{\partial}$};
				\fill (\x-4,\y-3) node [below] {$1_{\partial}$};
				\fill (\x-2,\y-3) node [below] {$2_{\partial}$};
				\fill (\x+0,\y-3) node [below] {$3_{\partial}$};
				\fill (\x+2,\y-3) node [below] {$4_{\partial}$};
				\fill (\x+4,\y-3) node [below] {$5_{\partial}$};
				\fill (\x+6,\y-3) node [below] {$6_{\partial}$};

				\fill[] (\x-10.4,\y-2) node [left] {$d_1$};
				\fill[] (\x-8.75,\y-2) node [left] {$d_2$};
				\fill[] (\x-0,\y+3) node [left] {$d_3$};
				\fill[] (\x+1.75,\y+3) node [left] {$d_4$};
				\fill[] (\x+3.25,\y+3) node [left] {$d_5$};
				\fill[] (\x-6.25,\y-2) node [left] {$d_6$};
				\fill[] (\x-4.5,\y-2) node [left] {$d_1$};

				\draw[] (\x-0,\y+4) -- (\x-12,\y-3);
				\draw[] (\x-0,\y+4) -- (\x-10,\y-3);
				\draw[] (\x+2,\y+4) -- (\x-10,\y-3);
				\draw[] (\x+4,\y+4) -- (\x-10,\y-3);
				\draw[] (\x+6,\y+4) -- (\x-10,\y-3);
				\draw[] (\x+6,\y+4) -- (\x-8,\y-3);
				\draw[] (\x+6,\y+4) -- (\x-6,\y-3);
				
			}
		}
		\end{tikzpicture}
	\end{center}
\caption{$\cox^3(T)$}
\label{two_cox_tran}
\end{figure}

\medskip
We can extend the definition of the Coxeter transformation to arbitrary triangulations of $C_{p,q}$ by first flipping all peripheral arcs to reach a bridging triangulation. Let $T$ be a triangulation with $k \geq 1$ peripheral arcs. Then there exists a finite sequence of flips $\mu_{\alpha} := \mu_{\alpha_1}, \ldots, \mu_{\alpha_k}$, where $\alpha_i$ is a peripheral arc, for every $i$, and $\alpha_p$ is a bounding arc in the triangulation $\mu_{\alpha_{p-1}} \cdots \mu_{\alpha_1}T$. Note that this sequence is not necessarily unique, since we may have more than one bounding arc in our triangulation at any given time. Other sequences of flips (flipping non-bounding arcs, for example) may also result in a bridging triangulation, but flipping only bounding arcs will give us a minimal sequence of flips. However, our resulting triangulation is independent of the order in which we choose to mutate the bounding arcs. 

\begin{proposition}
Using the same notation as above, the bridging triangulation $\tilde{T} = \mu_\alpha T$ is uniquely determined (unique up to labeling of arcs).
\end{proposition}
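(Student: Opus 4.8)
The plan is to prove this by studying how the bridging arcs of $T$ partition the annulus into regions, and showing that within each region the bounding flips are forced, the only freedom being the order in which independent regions are treated.

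I would start with two easy remarks. A flip of a bounding arc replaces a peripheral arc by a bridging arc, so it strictly decreases the number of peripheral arcs; hence the process terminates after exactly $k$ flips at a bridging triangulation, and moreover no bridging arc of $T$ is ever flipped, so all of them survive into $\tilde T$. Since $T$ has at least two bridging arcs (Lemma~\ref{ann_bridge}), which are pairwise non-crossing and each run from $\partial$ to $\partial'$, they cut $C_{p,q}$ into disk regions $R_1,\dots,R_m$, one between each cyclically consecutive pair; the peripheral arcs of $T$ lie inside these regions, and a bounding flip stays inside the region that contains it, only refining it.

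The structural heart is a \emph{fan lemma}: a region $R$ lying between two consecutive bridging arcs $b,b'$ and triangulated purely by peripheral arcs must be a fan, i.e.\ it has exactly one marked point on one of the two boundary components. This I would get by inspecting the triangle of $T|_R$ incident to $b$: its third vertex is joined to both endpoints of $b$ by edges of $R$ or by peripheral diagonals, and a short case check on the location of that vertex forces one ``side'' of $R$ to be a single vertex. The same inspection then shows that in a non-degenerate fan region, say with apex $y_0\in\partial'$, with $\partial$-vertices $x_0,\dots,x_u$ and with $b=[x_0,y_0]$, $b'=[x_u,y_0]$, the triangle of $T|_R$ at $b$ must be $(x_0,y_0,x_u)$, whose only peripheral edge $[x_0,x_u]$ is therefore the unique bounding arc of $T$ inside $R$. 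Flipping $[x_0,x_u]$ produces a bridging arc $[y_0,x_j]$ with $1\le j\le u-1$, which splits $R$ into two strictly smaller fans, both again with apex $y_0$ and triangulated by what is left of $T|_R$.

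Assembling this: at every stage the current triangulation still has at least two bridging arcs, and none of the arcs lying inside a region between two consecutive bridging arcs is bridging, so the fan lemma applies anew; thus the bounding arcs of the current triangulation are in bijection with its non-degenerate fan regions (one each), and flipping one of them refines only that region and leaves every other region untouched. Hence the sole freedom in the whole sequence is the order in which the regions are refined, and flips in distinct regions commute, so the outcome is forced: $\tilde T$ is obtained from $T$ by replacing $T|_{R_i}$ in each region by its fan triangulation with the same apex, which depends on $T$ alone up to the naming of arcs. I expect the fan lemma, together with the claim that the bounding-arcs/fan-regions correspondence is stable under flips (which rests on ``no bridging arc is ever destroyed, and none is created in the interior of an existing region''), to be the part needing the most care; once that is in place, the commutation of region-local flips makes confluence immediate, with no appeal to Newman's lemma needed.
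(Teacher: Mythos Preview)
Your argument is correct and follows essentially the same line as the paper's proof: both decompose the annulus along the bridging arcs of $T$ into fan-shaped polygons and observe that each such polygon has a unique all-bridging triangulation, namely the fan from its apex. Your ``fan lemma'' is exactly the paper's observation that every bridging arc lies in two triangles each containing another bridging arc (so consecutive bridging arcs share an endpoint, giving the apex), and your regions $R_i$ are the paper's polygons $P_k$.

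The main difference is one of emphasis rather than content. The paper argues \emph{statically}: since bridging arcs are never flipped they survive into $\tilde T$, and since each fan region admits only one bridging triangulation, $\tilde T$ is forced; the paper then simply describes this target triangulation. You argue \emph{dynamically}, tracking the bounding-arc process step by step: each fan region has a unique bounding arc, flipping it refines into two smaller fans with the same apex, flips in distinct regions commute, hence confluence. Your version is more operational and fills in a couple of points the paper leaves implicit (in particular, that the original bridging arcs persist and that the prescribed bounding-arc sequence actually lands on the fan triangulation rather than merely that \emph{some} flip sequence does), at the cost of a longer argument. Both reach the same destination via the same decomposition; neither needs the recursive confluence machinery once one notes, as the paper does, that a fan polygon has exactly one triangulation by bridging arcs.
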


\begin{proof}
Let $T$ be a a triangulation of $C_{p,q}$. We know by Lemma \ref{ann_bridge} that $T$ contains at least two bridging arcs. We claim that $\tilde{T}$ is determined by the bridging arcs in $T$. Every bridging arc is an edge of two triangles in $T$ where in both triangles one of the other two edges is also a bridging arc. Without loss of generality, let $d_i = [i_\partial, k_{\partial'}]$ and $d_j = [j_\partial, k_{\partial'}]$ be two bridging arcs in $T$ such that $d_i$ and $d_j$ are two sides of a triangle in $T$, and $i < j$. We consider the triangulation restricted to the polygon $P_k$ where the boundary of $P_k$ is made up of the arcs of $[i_\partial, k_{\partial'}]$ and $[j_\partial, k_{\partial'}]$, and the boundary segment $[i_\partial, j_{\partial}]$ of $C_{p,q}$. Then we can find a finite sequence of flips $\mu_{\alpha_k}$ such that after performing this sequence, all arcs in $P_k$ have an endpoint at $k_{\partial'}$ (note that if there are no internal arcs in $P_k$, then we have the empty sequence). Such a sequence exists because any two triangulations of a surface $S$ are related through a sequence of flips. Then we have a fan of bridging arcs $[(i+1)_\partial, k_{\partial'}] \cdots [(j-1)_\partial, k_{\partial'}]$ in our triangulation $\mu_{\alpha_k}T$ stemming from $k_{\partial'}$. We do this for every such triangle where two sides are bridging arcs of $T$. Our bridging triangulation $\tilde{T} = \mu_\alpha T$ is then comprised of fans of bridging arc originating at the endpoints where two bridging arcs of $T$ meet. 

\begin{center}
\begin{tikzpicture}[scale = .4]
		\tikzstyle{every node} = [font = \tiny]
		\foreach \x in {-12}
		{
			\foreach \y in {-8}
			{
				\draw[<-] (\x-5,\y+4) -- (\x+5,\y+4);
				\fill (\x-6,\y+4) node {\small{$\partial'$}};
				\draw[->] (\x-5,\y-0) -- (\x+5,\y-0);
				\fill (\x-6,\y-0) node {\small{$\partial$}};

				\draw[thick] (\x-0,\y+4) -- (\x-4,\y-0);
				\draw[thick] (\x-0,\y+4) -- (\x+4,\y-0);
				\draw[thick] (\x-4,\y+0) -- (\x+4,\y-0);
				
				\draw[gray] (\x-4,\y+4) -- (\x-4,\y-0);
				\draw[gray](\x+4,\y+4) -- (\x+4,\y-0);
				\draw[gray](\x-2,\y+4) -- (\x-4,\y-0);

				\draw[] (\x-4,\y-0) .. controls (\x-1.75,\y+2.25) and (\x+1.75,\y+2.25) .. (\x+4,\y-0);
				\draw[] (\x-4,\y-0) .. controls (\x-3,\y+1) and (\x-1,\y+1) .. (\x+0,\y+0);
				\draw[] (\x-0,\y+0) .. controls (\x+1,\y+1) and (\x+3,\y+1) .. (\x+4,\y+0);
				
				\draw[gray] (\x-0,\y+4) .. controls (\x+1,\y+3) and (\x+3,\y+3) .. (\x+4,\y+4);
				
				\foreach \t in {-4,-2,0,2,4}
				{
					\fill (\x+\t,\y-0) circle (.1);
					\fill (\x+\t,\y+4) circle (.1);
				}
					
				\fill (\x-0,\y+4) node [above] {$k_{\partial'}$};

				\fill (\x-4,\y-0) node [below] {$i_{\partial}$};
				\fill (\x+4,\y-0) node [below] {$j_{\partial}$};	
				
					\fill (\x-1.75,\y+2.5) node [above] {$d_{i}$};
				\fill (\x+2.75,\y+1.5) node [above] {$d_{j}$};
				
				\draw [->] (\x+7,\y+2) -- (\x+9,\y+2);
			}
			
		}
		
		\foreach \x in {6}
		{
			\foreach \y in {-8}
			{
				\draw[<-] (\x-5,\y+4) -- (\x+5,\y+4);
				\fill (\x-6,\y+4) node {\small{$\partial'$}};
				\draw[->] (\x-5,\y-0) -- (\x+5,\y-0);
				\fill (\x-6,\y-0) node {\small{$\partial$}};

				\draw[thick] (\x-0,\y+4) -- (\x-4,\y-0);
				\draw[thick] (\x-0,\y+4) -- (\x+4,\y-0);
				\draw[thick] (\x-4,\y+0) -- (\x+4,\y-0);
				
				\draw[gray] (\x-4,\y+4) -- (\x-4,\y-0);
				\draw[gray](\x+4,\y+4) -- (\x+4,\y-0);
				\draw[gray](\x-2,\y+4) -- (\x-4,\y-0);
				
				\draw[](\x+0,\y+4) -- (\x-2,\y-0);
				\draw[](\x+0,\y+4) -- (\x+0,\y-0);
				\draw[](\x+0,\y+4) -- (\x+2,\y-0);

				\draw[gray] (\x-0,\y+4) .. controls (\x+1,\y+3) and (\x+3,\y+3) .. (\x+4,\y+4);
			
				\foreach \t in {-4,-2,0,2,4}
				{
					\fill (\x+\t,\y-0) circle (.1);
					\fill (\x+\t,\y+4) circle (.1);
				}
					
				\fill (\x-0,\y+4) node [above] {$k_{\partial'}$};
				\fill (\x-4,\y-0) node [below] {$i_{\partial}$};
				\fill (\x+4,\y-0) node [below] {$j_{\partial}$};	
				
				\fill (\x-1.75,\y+2.5) node [above] {$d_{i}$};
				\fill (\x+2.75,\y+1.5) node [above] {$d_{j}$};
			}
			
		}
\end{tikzpicture}
\end{center}
\end{proof}

\begin{definition}
Let $T$ be a triangulation with peripheral arcs, and $\{\mu_{\alpha_i}\}_{i \in I}$ a finite sequence of flips, $\alpha_i$ peripheral, so that $\tilde{T} = \mu_\alpha T$ consists only of bridging arcs. Then the Coxeter transformation of $T$ is:
\[
\cox(T) = (\mu_\alpha)^{-1}\cox(\mu_\alpha T).
\]
\label{cox:def}
\end{definition}

\begin{theorem}
Let $T$ be a triangulation of $C_{p,q}$. 
\begin{itemize}
\item[(1)] We have $\mu D^+_z(T) = D^+_z(\mu T)$ for every arc flip $\mu$.
\end{itemize}
Let $\tilde{T} = \mu_\alpha T$ be a bridging triangulation of $C_{p,q}$, and let $m = \lcm(p,q)$. Then there exist $r,s \in \mathbb{N}$ such that $pr = m$ and $qs = m$. We have the following commutativity relations:
\begin{itemize}
\item[(2)] \,\, $\cox^m(\tilde{T}) = D^{r+s}_z(\tilde{T}),$
\item[(3)] \,\, $D^{r+s}_z(T) = \cox^m(T)$.
\end{itemize}
\label{th:comm}
\end{theorem}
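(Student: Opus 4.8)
The plan is to prove the three statements in the order given: (1) is a formal consequence of $D^+_z$ being a homeomorphism; (2) is the main point and is proved by an explicit computation in the universal cover built on (the proof of) Lemma \ref{lemma:endpoints}; and (3) is deduced from (2) by transporting the identity through the peripheral-arc flips of Definition \ref{cox:def}. For (1): a flip of an arc $\gamma\in T$ replaces $\gamma$ by the opposite diagonal of the quadrilateral cut out by the two triangles of $T$ incident to $\gamma$, and a homeomorphism carries that quadrilateral to the corresponding quadrilateral of $D^+_z(T)$ and the new diagonal to its image; hence $D^+_z$ intertwines the flip at $\gamma$ with the flip at $D^+_z(\gamma)$, which under the identification of the arcs of $D^+_z(T)$ with those of $T$ induced by $D^+_z$ is precisely $\mu D^+_z(T)=D^+_z(\mu T)$. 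The same statement holds verbatim for any homeomorphism, for every power $D_z^N$, and, by iteration, for any finite sequence of flips; I use all of these below.

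For (2), work in the universal cover $\mathbb{U}$ of $Cyl_{p,q}$ with the integer labelling in which $i_\partial$ lifts to $(iq,0)$, $j_{\partial'}$ lifts to $(-jp,1)$, and the deck group is generated by $x\mapsto x+pq$. The proof of Lemma \ref{lemma:endpoints} exhibits one explicit quadrilateral in which an arc $d=[j_\partial,k_{\partial'}]$ of a bridging triangulation sits and replaces $d$ by the opposite diagonal $[(j-1)_\partial,(k-1)_{\partial'}]$; lifting that quadrilateral, its four vertices are $(jq,0)$, $((j-1)q,0)$, $(-kp,1)$, $(-(k-1)p,1)$, so the lift of $d$ running from $(jq,0)$ to $(-kp,1)$ is sent to the lift of its image running from $((j-1)q,0)$ to $(-(k-1)p,1)$. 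Thus each Coxeter step moves the lower endpoint of this lift left by $q$ and the upper endpoint right by $p$; since the image triangulation is again bridging, Lemma \ref{lemma:endpoints} applies once more, and after $m$ steps the lift from $(iq,0)$ to $(-jp,1)$ is carried to the lift from $(iq-mq,0)$ to $(-jp+mp,1)$. On the other hand $D^+_z$ lifts to the homeomorphism of $\mathbb{U}$ equal to the identity near $y=0$ and to $x\mapsto x+pq$ near $y=1$ (the shift is positive because, with these conventions, rotating $\partial'$ clockwise by $2\pi$ moves the upper boundary in the direction of increasing $x$), so $D^{r+s}_z$ carries the lift from $(iq,0)$ to $(-jp,1)$ to the lift from $(iq,0)$ to $(-jp+(r+s)pq,1)$. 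Since $m=pr=qs$ we have $mq=r\,pq$ and $mp=s\,pq$, so translating the $\cox^m$ lift by the deck element $x\mapsto x+r\,pq$ gives a lift running from $(iq-mq+r\,pq,0)=(iq,0)$ to $(-jp+mp+r\,pq,1)=(-jp+(r+s)pq,1)$, exactly the $D^{r+s}_z$ lift. As $\tilde T$ has no peripheral arcs, this applies to every arc of $\tilde T$, giving $\cox^m(\tilde T)=D^{r+s}_z(\tilde T)$.

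For (3), let $T$ be arbitrary and $\mu_\alpha=\mu_{k_\ell}\circ\cdots\circ\mu_{k_1}$ a sequence of peripheral-arc flips with $\tilde T=\mu_\alpha T$ bridging, so $\cox(T)=(\mu_\alpha)^{-1}\cox(\tilde T)$ by Definition \ref{cox:def}. A Coxeter step returns the associated quiver to itself, hence preserves the labelling of arcs, and it sends peripheral arcs to peripheral arcs; so $(\mu_\alpha)^{-1}\cox(\tilde T)$ is again a triangulation whose peripheral arcs occupy the positions $k_1,\dots,k_\ell$ and whose bridging form (obtained by flipping exactly these) is $\cox(\tilde T)$. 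Iterating Definition \ref{cox:def} then gives $\cox^m(T)=(\mu_\alpha)^{-1}\cox^m(\tilde T)$, which by (2) equals $(\mu_\alpha)^{-1}D^{r+s}_z(\mu_\alpha T)$; commuting $D^{r+s}_z$ past the flips as in (1), this is $(\mu_\alpha)^{-1}\mu_\alpha'\,D^{r+s}_z(T)$, where $\mu_\alpha'$ flips the $D^{r+s}_z$-images of the arcs flipped by $\mu_\alpha$, i.e.\ (since $D^{r+s}_z$ also preserves arc labels) the same positions $k_\ell,\dots,k_1$ in the same order; therefore $(\mu_\alpha)^{-1}\mu_\alpha'$ cancels pairwise to the identity and $\cox^m(T)=D^{r+s}_z(T)$.

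I expect the crux to be the sign and direction bookkeeping in (2): pinning down the lift of $D^+_z$ with the correct sign and, relatedly, seeing why the two boundary contributions add to $r+s$ rather than partially cancelling. The latter holds because a Coxeter step moves the lower endpoint of an arc one way in $\mathbb{U}$ and the upper endpoint the other way, so realigning the lower endpoint by a deck translation after $m$ steps forces the upper endpoint to be translated by $mq+mp=(r+s)pq$ in total, with the $r$ coming from $\partial$ and the $s$ from $\partial'$. A secondary care-point is the cancellation $(\mu_\alpha)^{-1}\mu_\alpha'=\mathrm{id}$ in (3): one must verify that the non-canonical choice of $\mu_\alpha$ in Definition \ref{cox:def} genuinely washes out, which rests on $\cox$ and $D^{r+s}_z$ acting compatibly on arc labels.
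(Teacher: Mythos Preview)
Your proof is correct and follows essentially the same approach as the paper: part~(1) via the fact that a homeomorphism carries the flip quadrilateral to the flip quadrilateral, part~(2) via Lemma~\ref{lemma:endpoints} and endpoint-counting in the universal cover, and part~(3) by transporting~(2) through the peripheral flips of Definition~\ref{cox:def} using~(1). Your treatment is in fact more detailed than the paper's in two places: in~(2) you make the deck-translation bookkeeping explicit (the paper simply asserts that shifting $r$ frames on $\partial$ and $s$ frames on $\partial'$ amounts to $D_z^{r+s}$), and in~(3) you justify the iteration $\cox^m(T)=(\mu_\alpha)^{-1}\cox^m(\tilde T)$, which the paper uses without comment in its one-line chain $D_z^{r+s}(T)=\mu_\alpha^{-1}\mu_\alpha D_z^{r+s}(T)=\mu_\alpha^{-1}D_z^{r+s}(\mu_\alpha T)=\mu_\alpha^{-1}\cox^m(\mu_\alpha T)=\cox^m(T)$.
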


\begin{proof}
\begin{enumerate}
\item Dehn twists do not change relative positions of arcs in a triangulation, so the arcs involved in a quadrilateral still form a quadrilateral after applying the Dehn twist. Thus the following diagram commutes for every arc flip $\mu$:

\begin{figure}[h!]
\centering
\begin{tikzpicture}[xscale = .175, yscale = .175]
\tikzstyle{every node} = [font = \tiny]
					\foreach \x in {0}
					{
						\foreach \y in {0}
						{
						
						\draw (\x-4, \y+4) -- (\x+4, \y+4);
						\draw (\x-4, \y) -- (\x+4, \y);
						
						\draw (\x-2, \y+4) -- (\x-2, \y);
						\draw (\x-2, \y) -- (\x+2, \y+4);
						\draw (\x+2, \y) -- (\x+2, \y+4);
						\draw (\x-2, \y) .. controls (\x-.5,\y+1) and (\x+.5,\y+1) .. (\x+2,\y);

							
						\draw (\x-4, \y-10) -- (\x+4, \y-10);
						\draw (\x-4, \y-14) -- (\x+4, \y-14);
						
						\draw (\x-2, \y-10) -- (\x-2, \y-14);
						\draw (\x-2, \y-14) -- (\x+2, \y-10);
						\draw (\x+2, \y-14) -- (\x+2, \y-10);
						\draw (\x, \y-14) -- (\x+2,\y-10);	
							
						\draw (\x+14, \y+4) -- (\x+22, \y+4);
						\draw (\x+14, \y) -- (\x+22, \y);
						
						\draw (\x+17, \y+4) -- (\x+15, \y);
						\draw (\x+15, \y) -- (\x+21, \y+4);
						\draw (\x+19, \y) -- (\x+21, \y+4);
						\draw (\x+15, \y) .. controls (\x+16.5,\y+1) and (\x+17.5,\y+1) .. (\x+19,\y);	
							
						\draw (\x+14, \y-10) -- (\x+22, \y-10);
						\draw (\x+14, \y-14) -- (\x+22, \y-14);
						
						\draw (\x+17, \y-10) -- (\x+15, \y-14);
						\draw (\x+15, \y-14) -- (\x+21, \y-10);
						\draw (\x+19, \y-14) -- (\x+21, \y-10);
						\draw (\x+17, \y-14) -- (\x+21,\y-10);	
							
						}
					}
					\draw[->] (8,2) -- (10,2);
					\node[above] at (9,2) {$D_z(T)$};
					\draw[->] (8,-12) -- (10,-12);
					\node[above] at (9,-12) {\tiny{$D_z(\mu T)$}};
					\draw[->] (0,-4) -- (0, -6);
					\node[left] at (0,-5) {$\tiny{\mu T}$};
					\draw[->] (18,-4)--(18,-6);
					\node[right] at (18,-5) {$\mu D_z(T)$};
	\end{tikzpicture}
\end{figure}
\medskip

\item One iteration of $\cox(\tilde{T})$ moves an arc $[i_\partial,j_{\partial'}] \mapsto [(i-1)_\partial,(j-1)_{\partial'}]$. Since $pr = m = qs$, $\cox^m(\tilde{T}) :[i_\partial,j_{\partial'}] \mapsto [(i-m)_\partial,(j-m)_{\partial'}] = [(i-pr)_\partial,(j-qs)_{\partial'}] $. So $\tilde{T}$ has shifted endpoints of arcs $r$ frames in the negative direction on boundary $\partial$, and $s$ frames in the negative direction on $\partial'$. In total, the triangulation now stretches $r+s$ frames, and thus $\cox^m(\tilde{T}) = D^{r+s}_z(\tilde{T})$.
\item We will use parts (1) \& (2) to prove (3). 
\[D_z^{r+s}(T)  = \mu_\alpha^{-1} \mu_\alpha D_z^{r+s}(T) \stackrel{(1)}{=} \mu_\alpha^{-1} D_z^{r+s} (\mu_\alpha T)   \stackrel{(2)}{=} \mu_\alpha^{-1} \cox^m(\mu_\alpha T) = \cox^m(T) \]
\end{enumerate}
\end{proof}

We define $\Cox := \cox^m$, where $m = \lcm(p,q)$. The endpoints of the arcs of a triangulation $T$ of $C_{p,q}$ are invariant under $\Cox(T)$.

These commutativity relations provide us with a dictionary to go between the topological and algebraic framework. This becomes useful when considering what happens to the quivers (or root systems) under the Dehn twist, and to see what happens to a triangulation when applying a Coxeter transformation. The Coxeter transformation for triangulations of the annulus can be defined for planar surfaces with several boundary components. This comes down to choosing appropriate ``boundaries" $\partial_1, \partial_2$.

\subsection{Coxeter transformations of surfaces with several boundary components}

Let $(S,M)$ be a marked planar surface with several boundary components, such that each boundary component has at least one marked point, and let $\tilde{T} = T$ be a bridging triangulation of $S$, i.e. a triangulation where the endpoints of each arc lie on different boundary components. We choose a simple, non-contractible loop $z$ in $S$. The \emph{interior} of the loop $z$ is to the right of $z$ when moving along $z$ in the clockwise direction. The exterior is then to the left of $z$. We use the following notation:

\begin{itemize}
\item Let $D(z) = \{d_{i_1}, \ldots, d_{i_d}\}$ denote the set of arcs of $T$ that intersect $z$.
\item Let $V_1(z)$ be the set of marked points in the interior of $z$ such that every marked point in $V_1$ is the endpoint of at least one $d_i \in D(z)$.
\item Let $V_2(z)$ be the set of marked points in the exterior of $z$ such that every marked point in $V_2$ is the endpoint of at least one $d_i \in D(z)$.
\end{itemize}

Then there exists a minimal cycle $c_1$ (not necessarily unique), formed by arcs in $T$ and boundary segments of $S$ (arcs may appear more than once in the cycle), connecting all $m \in V_1$. We set $\partial_1 = c_1$. There also exists a minimal cycle $c_2$ (not necessarily unique), formed by arcs in $T$ and boundary segments of $S$ (arcs may appear more than once in the cycle), connecting all $m \in V_2$. We set $\partial_2 = c_2$. 

We can then consider $T$ restricted to the region between $\partial_1$ and $\partial_2$. This is an annulus triangulated by $D(z)$, and we can apply the machinery from Section 5.1 to $T$.

\begin{example}
Consider the surface $S$ with four boundary components, drawn below.

\begin{tikzpicture}[scale = .6]
	\tikzstyle{every node} = [font = \small]
	 
	 \draw[] (-10,0) -- (-6.5,0);
	 \draw[] (-10,0) .. controls (-6.5,-1.75) and (-3.5,-2) ..  (-3.5,0);
	 \draw[] (-10,0) .. controls (-5,-2.75) and (-3.5,-2) ..  (-.5,0);
	 
	 \draw[] (0,3) .. controls (-5.5,2) and (-6.5,1.75) .. (-6.5,0);
	 \draw[] (0,3) .. controls (-2,2) and (-3,1.5) ..  (-3.5,0);
	 \draw[preaction={draw,green,-,double=green,double distance=2\pgflinewidth,}] 
	 		(0,3) .. controls (3,2) and (3.5,1.75) ..  (4.5,0);
	 \draw[] (0,3) .. controls (-.5,2) and (-.6,1.5) ..  (-.5,0);

	 \draw[] (10,0) .. controls (7,2.75) and (4.5,2) .. (4.5,0);
	\draw[] (10,0) ..controls (7,-3.25) and (-1,-3.25) .. (-.5,0);
	 \draw[preaction={draw,green,-,double=green,double distance=2\pgflinewidth,}] 
	 		(10,0) .. controls (8,-2) and (4.5,-2) ..  (4.5,0);	 
	 
	 \draw[preaction={draw,orange,-,double=orange,double distance=2\pgflinewidth,}] (-3.5,0) -- (-.5,0);
	 
	 \draw[] (-.5,0) .. controls (-.5,-1.75) and (1.5,-2.25) .. (4.5,0);
	 \draw[] (-.5,0) .. controls (-.5,2) and (1.5,2.5) ..  (4.5,0);	
	 
	 \draw[thick] (0,0) ellipse (10cm and 3cm);
	 \draw[thick,preaction={draw,orange,-,double=orange,double distance=2\pgflinewidth,}]
	 		 (1,0) ellipse (1.5cm and 1cm);
			 
	 \draw[thick,preaction={draw,orange,-,double=orange,double distance=2\pgflinewidth,}] 
	 		(-5,0) ellipse (1.5cm and 1cm);
			
	 \draw[thick] (6,0) ellipse (1.5cm and 1cm);
	 \draw[thick,blue,dashed] (-2, 0) ellipse (5.5cm and 1.7cm);
	 \fill (-0.5,-1.75) node [below,blue] {$z$};
	
	\draw[thick,preaction={draw,green,-,double=green,double distance=2\pgflinewidth,}]
		 (-10,0) arc (180:270:10cm and 3cm);
		
	\draw[thick,preaction={draw,green,-,double=green,double distance=2\pgflinewidth,}] 
		(-10,0) arc (180:90:10cm and 3cm);
	
	\draw[thick,preaction={draw,green,-,double=green,double distance=2\pgflinewidth,}] 
		(10,0) arc (0:-90:10cm and 3cm);
	
	 \fill (-10,0) circle (.15);
	 \fill (-10,0) node [left] {$1_{\partial}$};
	 \fill (0,3) circle (.15);
	 \fill (0,3) node [above] {$2_{\partial}$};
	\fill (10,0) circle (.15);
	\fill (10,0) node [right] {$3_{\partial}$};
	
	 \fill (-6.5,0) circle (.15);
	 \fill (-6.5,0) node [right] {\tiny{$1_{\partial'}$}};
	 \fill (-3.5,0) circle (.15);
	 \fill (-3.5,0) node [left] {\tiny{$2_{\partial'}$}};
	 \fill (-5,1) node [below] {\tiny{$\partial'$}};
	
	 \fill (-.5,0) circle (.15);
	 \fill (-.5,0) node [right] {\tiny{$1_{\partial''}$}};
	  \fill (1,1) node [below] {\tiny{$\partial''$}};
	
	 \fill (4.5,0) circle (.15);
	 \fill (4.5,0) node [right] {\tiny{$1_{\partial'''}$}};
	  \fill (6,1) node [below] {\tiny{$\partial'''$}};
	 
	 \fill (-8,0) node [above] {\tiny{$d_1$}};
	 \fill (-6,1.8) node [] {\tiny{$d_2$}};
	 \fill (-1.75,2.2) node [left] {\tiny{$d_3$}};
	 \fill (-.5,2.2) node [right] {\tiny{$d_4$}};
	 \fill (1.95,2.5) node [right] {\tiny{$d_5$}};
	 \fill (5,2) node [right] {\tiny{$d_6$}};
	 \fill (8.5,-1) node [above] {\tiny{$d_7$}};
	 \fill (4,-2) node [right] {\tiny{$d_8$}};
	 \fill (-4,-2) node [] {\tiny{$d_9$}};
	 \fill (-3.9,-.75) node [right] {\tiny{$d_{10}$}};
	 \fill (-2,0) node [above] {\tiny{$d_{11}$}};
	 \fill (2.1,1.5) node [right] {\tiny{$d_{12}$}};
	 \fill (2.05,-1.5) node [right] {\tiny{$d_{13}$}};
	 
\end{tikzpicture}

We want to perform a Coxeter transformation with respect to (the closed curve) $z$ as chosen. We have
\begin{align*}
D(z) &= \{d_1,d_2,d_3,d_4,d_8,d_9,d_{10},d_{12},d_{13}\}, \\ 
V_1(z) &= \{1_{\partial'},2_{\partial'}, 1_{\partial''}\},\\
V_2(z) &= \{1_{\partial},2_{\partial}, 1_{\partial'''}, 3_{\partial}\}.
\end{align*} 
We now find minimal cycles $c_1$ and $c_2$.
The cycle $c_1$ is marked in orange, and it is $1_{\partial'} \rightarrow 2_{\partial'} \rightarrow 1_{\partial''} \rightarrow 1_{\partial''} \rightarrow 2_{\partial'} \rightarrow 1_{\partial'}$. Note that $1_{\partial''}$ is repeated twice in a row. This is because the boundary component $\partial''$ cannot be contracted to a single point. This cycle now becomes our boundary $\partial_1$.
The cycle $c_2$ is marked in green, and it is $1_{\partial} \rightarrow 2_{\partial} \rightarrow 1_{\partial'''} \rightarrow 3_{\partial} \rightarrow 1_{\partial}$. This cycle now becomes our boundary $\partial_2$. We can then represent the part of the triangulation between $c_1$ and $c_2$ as a cylinder $Cyl_{4,5}$:

\begin{center}
\begin{tikzpicture}[scale = .5]
	\tikzstyle{every node} = [font = \small]
	
			\foreach \x in {0}
		{
			\foreach \y in {-8}
			{
				\draw[thick,preaction={draw,orange,-,double=orange,double distance=2\pgflinewidth,}] 
						(\x-6,\y+4) -- (\x+6,\y+4);
				\fill (\x-7,\y+4) node {$\partial_1$};
				\draw[thick,preaction={draw,green,-,double=green,double distance=2\pgflinewidth,}]
						(\x-6,\y-0) -- (\x+6,\y-0);
				\fill (\x-7,\y-0) node {$\partial_2$};

				\draw[] (\x-5,\y+4) -- (\x-5,\y-0);
				\draw[] (\x-5,\y+4) -- (\x-2.5,\y-0);
				\draw[] (\x-3,\y+4) -- (\x-2.5,\y-0);
				\draw[] (\x-1,\y+4) -- (\x-2.5,\y-0);	
				\draw[] (\x-1,\y+4) -- (\x-0,\y-0);
				\draw[] (\x+1,\y+4) -- (\x-0,\y-0);
				\draw[] (\x+1,\y+4) -- (\x+2.5,\y-0);
				\draw[] (\x+1,\y+4) -- (\x+5,\y-0);	
				\draw[] (\x+3,\y+4) -- (\x+5,\y-0);		
				\draw[] (\x+5,\y+4) -- (\x+5,\y-0);

				\foreach \t in {-5,-3,-1,1,3,5}
				{
					\fill (\x+\t,\y+4) circle (.1);
				}
				
				\foreach \t in {-5,-2.5,0,2.5,5}
				{
					\fill (\x+\t,\y-0) circle (.1);
				}

				\fill (\x-5,\y+4) node [above] {$1_{\partial'}$};
				\fill (\x-3,\y+4) node [above] {$2_{\partial'}$};
				\fill (\x-1,\y+4) node [above] {$1_{\partial''}$};
				\fill (\x+1,\y+4) node [above] {$1_{\partial''}$};
				\fill (\x+3,\y+4) node [above] {$2_{\partial'}$};
				\fill (\x+5,\y+4) node [above] {$1_{\partial'}$};

				\fill (\x-5,\y-0) node [below] {$1_{\partial}$};
				\fill (\x-2.5,\y-0) node [below] {$2_{\partial}$};
				\fill (\x+0,\y-0) node [below] {$1_{\partial'''}$};
				\fill (\x+2.5,\y-0) node [below] {$3_{\partial}$};
				\fill (\x+5,\y-0) node [below] {$1_{\partial}$};	
				
				 \fill (\x-5,\y+2) node [left] {\tiny{$d_1$}};
				 \fill (\x-3,\y+1) node [left] {\tiny{$d_2$}};
				 \fill (\x-2.75,\y+3) node [left] {\tiny{$d_3$}};
				 \fill (\x-1.25,\y+3) node [left] {\tiny{$d_4$}};
				  \fill (\x-0,\y+1) node [left] {\tiny{$d_{12}$}};
				 \fill (\x+.85,\y+3) node [left] {\tiny{$d_{13}$}};
				 \fill (\x+2.2,\y+1) node [left] {\tiny{$d_8$}};
				 \fill (\x+3.95,\y+1) node [left] {\tiny{$d_9$}};
				  \fill (\x+3.75,\y+3) node [left] {\tiny{$d_{10}$}};
				  \fill (\x+5.2,\y+2) node [left] {\tiny{$d_1$}};
			}
		}

\end{tikzpicture}	
\end{center}

From here, we perform a Coxeter transformation in the usual manner. We consider the associated quiver to find an admissible ordering, and then perform a sequence of flips. 

\end{example}

We can now reach the asymptotic triangulations using limits of Coxeter transformations. This is discussed in Section 6.

\section{Limits}

In this section we aim to show that the Coxeter transformation and the Dehn twist behave the same way in the limit. This allows us to use whichever process of obtaining an asymptotic triangulation that is the most useful in our setting. 

Recall from Definition \ref{cox:def} that if $T$ is a triangulation with peripheral arcs, and $\{\mu_{\alpha_i}\}_{i \in I}$ is a finite sequence of flips with $\alpha_i$ peripheral, so that $\tilde{T} = \mu_\alpha T$ consists only of bridging arcs, then the Coxeter transformation of $T$ is:
\[
\cox(T) = (\mu_\alpha)^{-1}\cox(\mu_\alpha T).
\]

Using the commutativity relations from Theorem \ref{th:comm}, we have that $\Cox(\tilde{T}) = \cox^m(\tilde{T}) = D^{r+s}_z(\tilde{T})$ for $r,s \in \mathbb{N}$ where $pr = qs = m = \lcm(p,q)$. We have the following proposition:

\begin{proposition} Let $T$ be a bridging triangulation of $C_{p,q}$. Then 
\[
\lim_{n\to\infty}\Cox^{n}(T) = D^{+\infty}_z(T).
\]
\label{cox_inf:thm}
\end{proposition}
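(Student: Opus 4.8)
The plan is to leverage Theorem~\ref{th:comm}(2), which already equates one block of $m=\lcm(p,q)$ consecutive Coxeter steps with the Dehn twist raised to the power $r+s$, where $pr=qs=m$; iterating this identity expresses $\Cox^{n}$ as a single power of $D_z$, and the statement then follows by passing to the limit along a cofinal subsequence of Dehn twists. The one preliminary I need is that the Coxeter transformation preserves bridging triangulations: by Lemma~\ref{lemma:endpoints} a single $\cox$ moves each endpoint of each arc by $-1$ \emph{along the boundary component on which it already lies}, so an arc with its two endpoints on different boundary components retains that property. In particular $\Cox(\tilde T)=\cox^{m}(\tilde T)$ is again a bridging triangulation whenever $\tilde T$ is, so Theorem~\ref{th:comm}(2) is applicable to every triangulation in the orbit $\{\Cox^{n}(T)\}_{n\ge 0}$.

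With this in hand, I would prove by induction on $n\ge 1$ that
\[
\Cox^{n}(T)=D^{\,n(r+s)}_z(T).
\]
The base case $n=1$ is exactly Theorem~\ref{th:comm}(2). For the inductive step,
$\Cox^{n+1}(T)=\cox^{m}\!\bigl(\Cox^{n}(T)\bigr)=D^{r+s}_z\!\bigl(\Cox^{n}(T)\bigr)=D^{r+s}_z\!\bigl(D^{\,n(r+s)}_z(T)\bigr)=D^{\,(n+1)(r+s)}_z(T)$,
the second equality being Theorem~\ref{th:comm}(2) applied to the bridging triangulation $\Cox^{n}(T)$, and the third being the inductive hypothesis. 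Note that $r=m/p\ge 1$ and $s=m/q\ge 1$, so $r+s\ge 2$.

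It then remains to let $n\to\infty$. Since $r+s\ge 2$, the numbers $n(r+s)$ form a cofinal subsequence of $\mathbb N$, and the limit defining $D^{+\infty}_z=\lim_{k\to\infty}D^{k}_z$ exists; hence its value is unchanged when computed along this subsequence. Concretely, for each (bridging) arc $\gamma=[i,j]\in T$ the curves $D^{k}_z\cdot\gamma$ spiral ever more tightly around $z$ and converge to $\{\pi_i,\pi_j\}$ by \eqref{eq:dehn_plus}, so the same holds along $k=n(r+s)$; taking the union over $\gamma\in T$ gives
\[
\lim_{n\to\infty}\Cox^{n}(T)=\lim_{n\to\infty}D^{\,n(r+s)}_z(T)=\bigcup_{\gamma\in T}D^{+\infty}_z\cdot\gamma=D^{+\infty}_z(T),
\]
as claimed. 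The only point requiring genuine care is this last passage to the limit: one must make sure that ``convergence of triangulations'' is understood arc-by-arc (spiralling number tending to infinity), under which a convergent sequence and any of its subsequences share a limit. Everything else is bookkeeping built on top of Theorem~\ref{th:comm}.
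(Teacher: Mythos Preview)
Your proposal is correct and follows essentially the same route as the paper, which simply remarks that the statement follows from the definitions of $\Cox$ and $D_z^{+\infty}$ together with Section~4. Your argument makes this explicit by invoking Theorem~\ref{th:comm}(2) and iterating, which is exactly what the paper intends; the only difference is that you supply the details (closure of bridging triangulations under $\cox$, the induction, and the cofinality of $n(r+s)$) that the paper leaves to the reader.
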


The proof follows from the definitions of $\Cox$ and $D_z^{+\infty}$, cf. Section 4.

If $T$ is a bridging triangulation, we define $$\Cox^{+\infty}(T) = \lim_{n \to \infty}\Cox^n(T).$$ By Proposition \ref{cox_inf:thm}, $\Cox^{+\infty}(T)$ is an asymptotic triangulation of $C_{p,q}$.

\subsection{Quivers}

As described in Section 1, a Coxeter transformation on a quiver is a sequence of reflections from sources to sinks. 

We have already defined the limits of the Dehn twist and Coxeter transformation on a bridging triangulation $T $. Now we want to see what happens to the quiver $Q_T$ under these transformations. It is well known that a quiver is fixed under the action of the mapping class group on $C_{p,q}$. However, the quiver behaves differently in the limit. We saw that the quiver $Q_{D_z^{+\infty}(T)}$ becomes disconnected. We now give an algorithm to obtain the quiver  $Q_{\Cox^{+\infty}(T)}$ directly from $Q_T$ without passing through the triangulations involved.

Let $Q = Q_T$ be a quiver associated to a bridging triangulation $T$. Take two copies of $Q$, draw them as planar graphs (as un-oriented cycles) with the vertices of each quiver labeled in a clockwise manner. Consider a maximal counter-clockwise path $P$ in $Q$. For there to be such a path $P= i \to \cdots \to j$ in $Q$, the corresponding arcs $d_i, \ldots, d_j \in T$ must share an endpoint on $\partial$. In the limit $\Cox^{+\infty}(T)$, the arcs involved collapse to a single Pr\"ufer arc based on $\partial$ (cf. Figure \ref{contract_d}). We also consider maximal clockwise paths in $Q_T$.

\begin{algorithm}
\caption{Constructing $Q_\partial$, $Q_{\partial'}$ from $Q$.}
  \begin{algorithmic}[1]
  \STATE Replace every maximal counter-clockwise path $P = i \to \cdots \to j$ in $Q$ by a single vertex $w_{i,j}$. Denote the resulting quiver by $Q_\partial$.
  \STATE Replace every maximal clockwise path $P = r \to \cdots \to s$ in $Q$ by a single vertex $u_{r,s}$. Denote the resulting quiver by $Q_{\partial'}$.
  \end{algorithmic}
\label{algo1}
\end{algorithm}

By construction, $Q_\partial$ only has arrows forming a clockwise cycle, and $Q_{\partial'}$ only has arrows forming a counter-clockwise cycle.

\begin{figure}
\centering
\subfigure{
	\begin{tikzpicture}[scale = .3]
		\tikzstyle{every node} = [font = \small]
		\foreach \x in {0}
		{
			\foreach \y in {-8}
			{
				\draw[<-] (\x-5,\y+4) -- (\x+6,\y+4);
				\fill (\x-6,\y+4) node {$\partial'$};
				\draw[->] (\x-5,\y-4) -- (\x+6,\y-4);
				\fill (\x-6,\y-4) node {$\partial$};

				\foreach \t in {-4,-2.5,-1,0.5,2,3.5,5}
				{
					\fill (\x+\t,\y+4) circle (.1);
					\fill (\x+\t,\y-4) circle (.1);
				}

				\draw(\x-4,\y) -- (\x-4,\y-4);	
				\draw[gray](\x-2,\y) -- (\x-4,\y-4);
				\draw[gray] (\x+.75,\y-.5) -- (\x-4,\y-4);
				\draw(\x+3,\y-1.5) -- (\x-4,\y-4);
				
				\draw[->] (\x-3.75,\y-1) -- (\x-3,\y-1);
				\draw[->] [gray](\x-2.25,\y-1) -- (\x-1.25,\y-1.5);
				\draw[->] (\x-.65,\y-1.75) -- (\x-.25,\y-2.35);
				\fill (\x-4,\y-1) node [left] {\tiny{$d_i$}};
				\fill (\x,\y-3) node [right] {\tiny{$d_j$}};
				
				\fill (\x-4,\y-4) node [below] {\tiny{$k_{\partial}$}};
				
					
				\draw(\x+5,\y+4) -- (\x+3.5,\y+1);	
				\draw(\x+5,\y+4) -- (\x+1.5,\y+1.5);
				
				\draw[->] (\x+3.6,\y+1.7) -- (\x+2.65,\y+2);
				\fill (\x+2.65,\y+2.75) node [left] {\tiny{$d_s$}};
				\fill (\x+4.15,\y+2) node [right] {\tiny{$d_r$}};
				\fill (\x+5,\y+4) node [above] {\tiny{$t_{\partial'}$}};				
			}
		}
		
		\draw[->] (8,-8) -- (10,-8);
		\end{tikzpicture}
		}
		\quad
		\subfigure{
			\begin{tikzpicture}[scale = .3]
			\tikzstyle{every node} = [font = \small]
			\foreach \x in {0}
			{
				\foreach \y in {-8}
				{
					\draw[<-] (\x-5,\y+4) -- (\x+6,\y+4);
					\fill (\x-6,\y+4) node {$\partial'$};
					\draw[->] (\x-5,\y-4) -- (\x+6,\y-4);
					\fill (\x-6,\y-4) node {$\partial$};

					\foreach \t in {-4,-2.5,-1,0.5,2,3.5,5}
					{
						\fill (\x+\t,\y+4) circle (.1);
						\fill (\x+\t,\y-4) circle (.1);
					}

				\Prufer{\x-4}{\y-4}{9}{}
				\fill (\x+1,\y+2.75) node [below] {\tiny{$\pi_{t_{\partial'}}$}};
				\hPrufer{\x+5}{\y+4}{9}{}
				\fill (\x-1,\y-1.25) node [below] {\tiny{$\pi_{k_{\partial}}$}};

				\fill (\x+5,\y+4) node [above] {\tiny{$t_{\partial'}$}};
				\fill (\x-4,\y-4) node [below] {\tiny{$k_{\partial}$}};
				
			}
		}
		
		\end{tikzpicture}
		}
\caption{Coxeter transformation on a quiver}
\label{contract_d}
\end{figure}
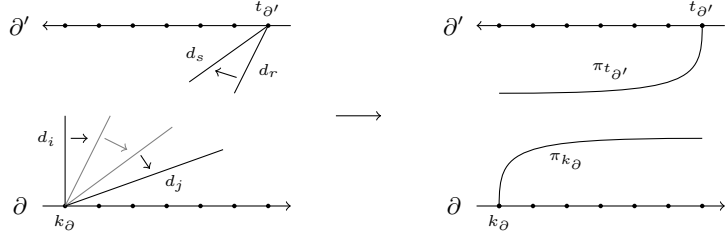

\begin{proposition}
Let $T$ be a bridging triangulation of $C_{p,q}$, $Q_T$ its associated quiver, and $Q_\partial$, $Q_{\partial'}$ as constructed above. Then $$Q_{D_z^{+\infty}(T)} \cong Q_\partial \sqcup Q_{\partial'}.$$
\end{proposition}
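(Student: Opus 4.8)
The plan is to show that the two quivers $Q_{D_z^{+\infty}(T)}$ and $Q_\partial \sqcup Q_{\partial'}$ have the same vertices and the same arrows, by tracking carefully what the Dehn twist limit does to arcs, triangles, and hence to the quiver combinatorics. First I would recall the relevant facts already established: by Lemma~\ref{ann_bridge} a bridging triangulation $T$ of $C_{p,q}$ has $p+q$ arcs, all bridging; by Equation~(\ref{eq:dehn_plus}) each arc $d=[i_\partial,j_{\partial'}]$ satisfies $D_z^{+\infty}\cdot d=\{\pi_{i_\partial},\pi_{j_{\partial'}}\}$; and by the discussion following Equation~(\ref{eq:dehn_minus}), all bridging arcs of $T$ sharing a given endpoint $k$ on a boundary collapse to the single Pr\"ufer arc $\pi_k$ in the limit. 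Since $D_z^{+\infty}(T)$ is asymptotic, it decomposes as $T_\partial\sqcup T_{\partial'}$ with $|T_\partial|=p$ and $|T_{\partial'}|=q$, and by Definition~\ref{def:QT} the associated quiver splits as $Q_{D_z^{+\infty}(T)}=Q_{T_\partial}\sqcup Q_{T_{\partial'}}$. So it suffices to identify $Q_{T_\partial}$ with $Q_\partial$ and $Q_{T_{\partial'}}$ with $Q_{\partial'}$; I would treat $Q_\partial$ in detail and note the $\partial'$ case is symmetric (with clockwise/counter-clockwise interchanged).

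Next I would establish the vertex bijection. In $Q_T$, the arcs of $T$ with a fixed endpoint $k_\partial$ on the outer boundary form, around $k_\partial$, a fan; in the quiver $Q_T$, consecutive arcs of a fan around a marked point on $\partial$ are joined by arrows, and these arrows all point the same way --- I would check, from the clockwise-rotation convention in Definition~\ref{def:QT}, that a fan around a vertex of $\partial$ gives a maximal \emph{counter-clockwise} path $P=i\to\cdots\to j$ in $Q_T$. (This uses that $T$ is a bridging triangulation, so every triangle has exactly one boundary edge on $\partial$ and one on $\partial'$, hence the two non-boundary sides of each triangle are bridging arcs, one of which is a clockwise rotation of the other at their common $\partial$-endpoint.) Under $D_z^{+\infty}$ all arcs in such a fan collapse to one Pr\"ufer arc $\pi_{k_\partial}$, which is precisely the vertex $w_{i,j}$ produced in Step~1 of Algorithm~\ref{algo1}. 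Conversely every vertex of $Q_\partial$ arises this way, and the count matches: the number of distinct $\partial$-endpoints used by $T$ equals $p$ (each of the $p$ marked points on $\partial$ supports at least one bridging arc, since every boundary segment lies under some triangle whose third vertex is on $\partial'$), so $|Q_\partial|=p=|T_\partial|$.

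Then I would check the arrows. Two Pr\"ufer arcs $\pi_{k_\partial},\pi_{k'_\partial}$ bound a common triangle in $D_z^{+\infty}(T)$ exactly when, in $T$, there was a triangle with two bridging edges, one ending at $k_\partial$ and one at $k'_\partial$ --- equivalently, when the last arc of the fan at $k_\partial$ and the first arc of the fan at $k'_\partial$ shared a triangle in $T$, i.e. when the maximal counter-clockwise paths ending at $j$ (the fan at $k_\partial$) and starting at the successor of $j$ (the fan at $k'_\partial$) are adjacent in $Q_T$. That adjacency is exactly an arrow of $Q_T$ between the two paths, which is precisely the arrow retained between $w_{i,j}$ and $w_{j+1,j'}$ in the contraction of Step~1. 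I would also verify the orientation: since contracting an oriented path to a point and keeping the remaining arrows yields a cyclically oriented quiver, $Q_\partial$ carries a single clockwise oriented cycle, and I would match this against the orientation forced by Definition~\ref{def:QT} applied to the fan-of-Pr\"ufer-arcs picture of $T_\partial$ (cf. Figure~\ref{contract_d}). No arrows are created or lost in the limit because the combinatorics of which triangles are adjacent is governed entirely by the cyclic order of endpoints on $\partial$, which the Dehn twist preserves.

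The main obstacle I expect is bookkeeping the orientation conventions consistently: one must pin down, once and for all, that ``clockwise rotation of arcs'' at a $\partial$-vertex corresponds to a ``counter-clockwise path'' in $Q_T$ (because of the global clockwise labeling of the cyclic quiver and the fact that $\partial$ and $\partial'$ receive opposite orientations in $Cyl_{p,q}$), and then show that path-contraction is compatible with this. Degenerate cases also need a remark: a fan consisting of a single arc contracts to a path of length zero (a single vertex, already equal to its own $w_{i,i}$), and if $p=q=1$ or some marked point supports only one bridging arc these must be handled --- but they are consistent with the algorithm. Once the orientation dictionary is fixed, the rest is a direct check that the vertex sets and arrow sets on both sides coincide, giving $Q_{D_z^{+\infty}(T)}\cong Q_\partial\sqcup Q_{\partial'}$.
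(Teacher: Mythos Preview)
Your proposal is correct, but it is considerably more elaborate than what the paper actually does. The paper's proof is a two-line abstract argument: since $T$ is bridging, $D_z^{+\infty}(T)$ consists of $p$ Pr\"ufer arcs on $\partial$ and $q$ Pr\"ufer arcs on $\partial'$, so $Q_{D_z^{+\infty}(T)}$ is the disjoint union of a clockwise $p$-cycle and a counter-clockwise $q$-cycle; on the other side, the sentence immediately preceding the proposition already records that $Q_\partial$ is a clockwise cycle and $Q_{\partial'}$ a counter-clockwise cycle, and counting maximal paths gives $p$ and $q$ vertices respectively. Two oriented cycles of the same length are isomorphic, and that is the whole proof.

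Your approach instead constructs an \emph{explicit} isomorphism: you match each maximal counter-clockwise path in $Q_T$ (a fan at some $k_\partial$) with the Pr\"ufer arc $\pi_{k_\partial}$, and then verify that adjacency of fans in $T$ corresponds both to the surviving arrows after contraction and to the arrows of $Q_{T_\partial}$. This buys you a labelled isomorphism rather than an abstract one, and it makes transparent \emph{why} Algorithm~\ref{algo1} is the right construction --- the contraction of a path literally is the collapse of a fan under $D_z^{+\infty}$. The cost is the orientation bookkeeping you flag, which the paper sidesteps entirely by never naming a specific bijection. Both arguments are valid; yours is more informative, the paper's is shorter.
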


\begin{proof}
The quivers $Q_\partial$ and $Q_{\partial'}$ are constructed as above from a bridging triangulation of $C_{p,q}$. Thus the quiver $Q_\partial$ will be a clockwise cycle on $p$ vertices and the quiver $Q_{\partial'}$ will be a counter-clockwise cycle on $q$ vertices. 
Since $T$ is a bridging triangulation, $D^{+\infty}_z(T)$ will have $p$ Pr\"ufer arcs stemming from the outer boundary $\partial$, and $q$ Pr\"ufer arcs stemming from the inner boundary $\partial'$. So $Q_{D_z^{+\infty}(T)}$ has two connected components, one of which is a clockwise cycle on $p$ vertices, and one component is a counter-clockwise cycle on $q$ vertices. This is exactly $Q_\partial$ and $Q_{\partial'}$, and so we have the isomorphism $Q_{D_z^{+\infty}(T)} \cong Q_\partial \sqcup Q_{\partial'}.$
\end{proof}

\begin{corollary}
Let $T$ be a triangulation, and $Q_T$ its associated quiver. Then $Q_{D^{+\infty}(T)} \cong Q_{D^{-\infty}(T)}$.
\end{corollary}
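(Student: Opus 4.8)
The plan is to reduce the corollary to the case of a bridging triangulation, where the previous proposition applies, and then observe that the same counting argument works for both the positive and negative limits.

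First I would note that every triangulation $T$ of $C_{p,q}$ can be flipped to a bridging triangulation $\tilde T = \mu_\alpha T$, and that flips commute with the Dehn twist in the limit: by Theorem~\ref{th:comm}(1), $\mu D^+_z(T) = D^+_z(\mu T)$ for every arc flip $\mu$, and since each flip is an isomorphism on the level of quivers (flip $\leftrightarrow$ quiver mutation), we get $Q_{D^{+\infty}_z(T)} \cong Q_{D^{+\infty}_z(\tilde T)}$, and likewise $Q_{D^{-\infty}_z(T)} \cong Q_{D^{-\infty}_z(\tilde T)}$. (One should check that the limiting construction is compatible with finitely many flips; this is immediate since $D^{\pm\infty}_z$ is a limit of the iterated Dehn twist and each $\mu$ commutes with $D^n_z$ for all $n$.) So it suffices to treat $\tilde T$ bridging.

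Next, for a bridging triangulation $\tilde T$, the proposition preceding the corollary gives $Q_{D^{+\infty}_z(\tilde T)} \cong Q_\partial \sqcup Q_{\partial'}$, where $Q_\partial$ is a clockwise cycle on $p$ vertices and $Q_{\partial'}$ is a counter-clockwise cycle on $q$ vertices. The key point is that the negative Dehn twist is defined completely analogously (as remarked right after Theorem~\ref{th:comm}, ``everything can be defined analogously for $D^-_z$''), replacing Pr\"ufer arcs $\pi_i$ by adic arcs $\alpha_i$ via Equation~(\ref{eq:dehn_minus}). Since $\tilde T$ is bridging with $p$ arc-endpoints on $\partial$ and $q$ on $\partial'$, the limit $D^{-\infty}_z(\tilde T)$ has exactly $p$ adic arcs stemming from $\partial$ and $q$ adic arcs stemming from $\partial'$, so $Q_{D^{-\infty}_z(\tilde T)}$ again has two connected components — one a cycle on $p$ vertices and one a cycle on $q$ vertices. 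The only subtlety is the orientation of these cycles: because the adic arcs spiral in the opposite direction, the clockwise/counter-clockwise rotation rule of Definition~\ref{def:QT} produces cycles oriented oppositely to those in the positive case. But a cycle quiver on $p$ vertices is isomorphic (as an abstract quiver) to its own orientation-reversal via the permutation $k \mapsto -k$ of the vertices, so $Q_{D^{-\infty}_z(\tilde T)} \cong Q_\partial \sqcup Q_{\partial'} \cong Q_{D^{+\infty}_z(\tilde T)}$.

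The main obstacle I expect is the bookkeeping in the last step: making precise that reversing the spiraling direction reverses the cyclic orientation of each component quiver, and that such an orientation-reversed cycle is isomorphic to the original. This is a small combinatorial lemma (an $n$-cycle quiver has a unique underlying graph, and relabeling vertices by $k\mapsto n-k$ carries one orientation to the other), but it is the one place where one must be careful rather than just invoke the earlier proposition verbatim. Everything else — the reduction to the bridging case, the commutation with flips, and the arc-count — is routine given Theorem~\ref{th:comm} and the preceding proposition.
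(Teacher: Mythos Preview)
Your argument has two genuine gaps.

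First, the reduction to a bridging triangulation does not work as stated. You write that ``each flip is an isomorphism on the level of quivers'' and conclude $Q_{D^{+\infty}_z(T)} \cong Q_{D^{+\infty}_z(\tilde T)}$. But a flip induces a \emph{mutation} of the quiver, not an isomorphism; mutation-equivalent quivers are typically not isomorphic. So all you get from Theorem~\ref{th:comm}(1) is that $Q_{D^{+\infty}_z(T)}$ and $Q_{D^{+\infty}_z(\tilde T)}$ are related by a sequence of mutations, which is useless for transporting a quiver isomorphism. The paper avoids this entirely by working directly with an arbitrary $T$: peripheral arcs are fixed by $D_z^{\pm\infty}$, so the internal triangles of $T$ (and hence the corresponding oriented $3$-cycles in the quiver) are literally the same in both limits, and only the strictly asymptotic part needs to be compared.

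Second, your claim that ``the adic arcs spiral in the opposite direction, so Definition~\ref{def:QT} produces cycles oriented oppositely'' is false, and this is precisely the point of the paper's proof. A strictly asymptotic arc has one endpoint on the boundary and one end spiralling around $z$; to decide whether $d_j$ is a clockwise rotation of $d_i$ you rotate the \emph{boundary} endpoint, and that rotation is independent of whether the other end spirals positively or negatively. Hence the arrows between strictly asymptotic arcs go in the same direction in $D^{+\infty}_z(T)$ and $D^{-\infty}_z(T)$ (cf.\ Figure~\ref{pos_neg}), and the two quivers are in fact identical on the nose, not merely isomorphic via a cycle-reversal. Your proposed fix (relabelling $k\mapsto -k$) is unnecessary and, worse, would not be compatible with the mutation sequence in your reduction step even if that step were valid.
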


\begin{proof}
Consider the two triangulations $D_z^{+\infty}(T)$ and $D_z^{-\infty}(T)$ obtained from a triangulation $T$. All the strictly asymptotic arcs of $D^{+\infty}(T)$ are Pr\"ufer arcs, and all the strictly asymptotic arcs of $D^{-\infty}(T)$ are adic arcs. Now consider the associated quivers. Recall that every arc in a triangulation corresponds to a vertex in the quiver, and there is an arrow between two vertices $i \to j$ in the quiver if the corresponding arc $d_i$ can be rotated clockwise to become an arc isotopic to the corresponding arc $d_j$. Strictly asymptotic arcs have one endpoint a marked point on the boundary of $C_{p,q}$, and the other endpoint spirals infinitely around $z$. Thus for a strictly asymptotic arc $d_j$ to be a clockwise rotation of $d_i$, the endpoint of $d_i$ that is rotated is the one on the boundary. All arrows will go from left to right between strictly asymptotic arcs on the upper boundary, and all arrows between strictly asymptotic arcs will go from right to left on the lower boundary (cf. Fig. \ref{pos_neg}), independent of whether the arcs spiral positively or negatively around $z$. Peripheral arcs are unaffected by the Dehn twist, and therefore the internal triangles of $T$ stay fixed, and the cycles in the quivers corresponding to the internal triangles will be the same for both $Q_{D_z^{+\infty}}(T)$ and $Q_{D_z^{-\infty}(T)}$.

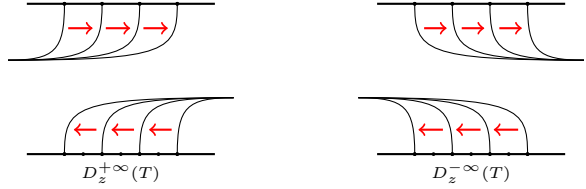
\begin{figure}[h!]
\subfigure{
\begin{tikzpicture}[xscale = .25, yscale = .25]
	\foreach \x in {0}
					{
						\foreach \y in {0}
						{
							\coordinate (H0) at (\x-5,\y+4);
							\coordinate (H1) at (\x-3,\y+4);
							\coordinate (H2) at (\x-1,\y+4);
							\coordinate (H3) at (\x+1,\y+4);
							\coordinate (H4) at (\x+3,\y+4);

							\draw[thick] (\x-5,\y+4) -- (\x+5,\y+4);
							\fill (H1) circle (.1);
							\fill (H2) circle (.1);
							\fill (H3) circle (.1);
							\fill (H4) circle (.1);

							\coordinate (B-1) at (\x-5,\y-4);
							\coordinate (B0) at (\x-4,\y-4);
							\coordinate (B1) at (\x-3,\y-4);
							\coordinate (B2) at (\x-2,\y-4);
							\coordinate (B3) at (\x-1,\y-4);
							\coordinate (B4) at (\x,\y-4);
							\coordinate (B5) at (\x+1,\y-4);
							\coordinate (B6) at (\x+2,\y-4);
							\coordinate (B7) at (\x+3,\y-4);

							\draw[thick] (\x-5,\y-4) -- (\x+5,\y-4);
							\fill (B1) circle (.1);
							\fill (B2) circle (.1);
							\fill (B3) circle (.1);
							\fill (B4) circle (.1);
							\fill (B5) circle (.1);
							\fill (B6) circle (.1);
							\fill (B7) circle (.1);
							
							\hPrufer{\x-3}{\y+4}{3}{}
							\hPrufer{\x-1}{\y+4}{5}{}
							\hPrufer{\x+1}{\y+4}{7}{}
							\hPrufer{\x+3}{\y+4}{9}{}

							\Prufer{\x-3}{\y-4}{9}{}
							\Prufer{\x-1}{\y-4}{7}{}
							\Prufer{\x+1}{\y-4}{5}{}
							\Prufer{\x+3}{\y-4}{3}{}

							\fill (\x,\y-5) node {\tiny{$D_z^{+\infty}(T)$}};
							
							\draw[red, thick, <-] (\x+2.5, \y+2.7) -- (\x+1.2, \y +2.7);
							\draw[red, thick, <-] (\x+.5, \y+2.7) -- (\x-.8, \y+2.7);
							\draw[red, thick, <-] (\x-1.5, \y+2.7) -- (\x-2.8, \y+2.7);
							
							\draw[red, thick, ->] (\x+2.7, \y-2.7) -- (\x+1.5, \y -2.7);
							\draw[red, thick, ->] (\x+.7, \y-2.7) -- (\x-.5, \y-2.7);
							\draw[red, thick, ->] (\x-1.3, \y-2.7) -- (\x-2.5, \y-2.7);
						}
					}

\end{tikzpicture}}
\quad \quad \quad \quad
\subfigure{
\begin{tikzpicture}[xscale = .25, yscale = .25]
	\foreach \x in {0}
					{
						\foreach \y in {0}
						{
							\coordinate (H0) at (\x-5,\y+4);
							\coordinate (H1) at (\x-3,\y+4);
							\coordinate (H2) at (\x-1,\y+4);
							\coordinate (H3) at (\x+1,\y+4);
							\coordinate (H4) at (\x+3,\y+4);

							\draw[thick] (\x-5,\y+4) -- (\x+5,\y+4);
							\fill (H1) circle (.1);
							\fill (H2) circle (.1);
							\fill (H3) circle (.1);
							\fill (H4) circle (.1);

							\coordinate (B-1) at (\x-5,\y-4);
							\coordinate (B0) at (\x-4,\y-4);
							\coordinate (B1) at (\x-3,\y-4);
							\coordinate (B2) at (\x-2,\y-4);
							\coordinate (B3) at (\x-1,\y-4);
							\coordinate (B4) at (\x,\y-4);
							\coordinate (B5) at (\x+1,\y-4);
							\coordinate (B6) at (\x+2,\y-4);
							\coordinate (B7) at (\x+3,\y-4);

							\draw[thick] (\x-5,\y-4) -- (\x+5,\y-4);
							\fill (B1) circle (.1);
							\fill (B2) circle (.1);
							\fill (B3) circle (.1);
							\fill (B4) circle (.1);
							\fill (B5) circle (.1);
							\fill (B6) circle (.1);
							\fill (B7) circle (.1);
							
							\hadic{\x-3}{\y+4}{9}{}
							\hadic{\x-1}{\y+4}{7}{}
							\hadic{\x+1}{\y+4}{5}{}
							\hadic{\x+3}{\y+4}{3}{}

							\adic{\x-3}{\y-4}{3}{}
							\adic{\x-1}{\y-4}{5}{}
							\adic{\x+1}{\y-4}{7}{}
							\adic{\x+3}{\y-4}{9}{}

							\fill (\x,\y-5) node {\tiny{$D_z^{-\infty} (T)$}};
							
							\draw[red, thick, <-] (\x+2.7, \y+2.7) -- (\x+1.5, \y +2.7);
							\draw[red, thick, <-] (\x+.7, \y+2.7) -- (\x-.5, \y+2.7);
							\draw[red, thick, <-] (\x-1.3, \y+2.7) -- (\x-2.5, \y+2.7);
							
							\draw[red, thick, ->] (\x+2.5, \y-2.7) -- (\x+1.2, \y -2.7);
							\draw[red, thick, ->] (\x+.5, \y-2.7) -- (\x-.8, \y-2.7);
							\draw[red, thick, ->] (\x-1.5, \y-2.7) -- (\x-2.8, \y-2.7);
						}
					
					}

\end{tikzpicture}
}
\caption{$Q_{D^{+\infty}(T)}$ and $Q_{D^{-\infty}(T)}$.}
\label{pos_neg}
\end{figure}
\label{Qpos_Qneg}
\end{proof}

To see an example of how these paths of arcs contract in the triangulation, see Figure \ref{contract_d}. The following example shows how we contract the paths in $Q$ to get two quivers $Q_\partial$, $Q_{\partial'}$. 

\begin{example}
Consider a frame of the following (bridging) triangulation $T$ and two copies of the associated quiver $Q_T$:

\begin{figure}[h!]
\centering
	\begin{tikzpicture}[xscale = .55, yscale = .35]
		\tikzstyle{every node} = [font = \small]
		\foreach \x in {0}
		{
			\foreach \y in {-8}
			{
				\draw[<-] (\x-7,\y+4) -- (\x+1,\y+4);
				\fill (\x-8,\y+4) node {$\partial'$};
				\draw[->] (\x-7,\y-3) -- (\x+1,\y-3);
				\fill (\x-8,\y-3) node {$\partial$};

				\foreach \t in {-6,-4,-2,0}
				{
					\fill (\x+\t,\y+4) circle (.1);
					\fill (\x+\t,\y-3) circle (.1);
				}
				
				\fill (\x-6,\y+4) node [above] {$0_{\partial'}$};
				\fill (\x-4,\y+4) node [above] {$2_{\partial'}$};
				\fill (\x-2,\y+4) node [above] {$1_{\partial'}$};
				\fill (\x+0,\y+4) node [above] {$0_{\partial'}$};

				\fill (\x-6,\y-3) node [below] {$0_{\partial}$};
				\fill (\x-4,\y-3) node [below] {$1_{\partial}$};
				\fill (\x-2,\y-3) node [below] {$2_{\partial}$};
				\fill (\x+0,\y-3) node [below] {$0_{\partial}$};

				\fill[] (\x-5.8,\y-1.5) node [left] {$d_1$};
				\fill[] (\x-4.3,\y-1.5) node [left] {$d_2$};
				\fill[] (\x-3.8,\y+2) node [left] {$d_3$};
				\fill[] (\x-2.3,\y-1.5) node [left] {$d_4$};
				\fill[] (\x-1.8,\y+2) node [left] {$d_5$};
				\fill[] (\x-0.5,\y+2) node [left] {$d_6$};
				\fill[] (\x+.2,\y-1.5) node [left] {$d_1$};

				\draw[] (\x-6,\y+4) -- (\x-6,\y-3);
				\draw[] (\x-6,\y+4) -- (\x-4,\y-3);
				\draw[] (\x-4,\y+4) -- (\x-4,\y-3);	
				\draw[] (\x-4,\y+4) -- (\x-2,\y-3);
				\draw[] (\x-2,\y+4) -- (\x-2,\y-3);
				\draw[] (\x-0,\y+4) -- (\x-2,\y-3);
				\draw[] (\x-0,\y+4) -- (\x-0,\y-3);

			}
		}
		\end{tikzpicture}
\end{figure}
\begin{figure}[h!]
\subfigure{
	\begin{tikzpicture}[scale = .25]
		\tikzstyle{every node} = [font = \small]
		\foreach \x in {0}
		{
			\foreach \y in {-8}
			{
			\fill (\x-10,\y+3) node {$Q_1:$};
			\fill (\x-6,\y) circle (.15);
			\fill (\x-6,\y) node [left] {$1$};
			\fill (\x-4,\y-3) circle (.15);
			\fill (\x-4,\y-3) node [below] {$6 $};
			\fill(\x,\y-3) circle (.15);
			\fill (\x,\y-3) node [below] {$ 5 $};
			\fill(\x+2,\y) circle (.15);
			\fill (\x+2,\y) node [right] {$4 $};
			\fill(\x,\y+3) circle (.15);
			\fill (\x,\y+3) node [above] {$ 3 $};
			\fill (\x-4,\y+3) circle (.15);
			\fill (\x-4,\y+3) node [above] {$2 $};

			\draw [->] (\x-6,\y)--(\x-4,\y-3); 
			\draw [->] (\x,\y-3) -- (\x-4,\y-3); 
			\draw [->] (\x+2,\y)--(\x,\y-3); 
			\draw [<-] (\x,\y+3) -- (\x+2,\y); 
			\draw [->] (\x-4,\y+3)--(\x,\y+3); 
			\draw [<-] (\x-6,\y) -- (\x-4,\y+3); 

			}
		}
	\end{tikzpicture}
	}
	\quad
	\subfigure{
		\begin{tikzpicture}[scale = .25]
		\tikzstyle{every node} = [font = \small]
		\foreach \x in {0}
		{
			\foreach \y in {-8}
			{
			\fill (\x-10,\y+3) node {$Q_2:$};
			\fill (\x-6,\y) circle (.15);
			\fill (\x-6,\y) node [left] {$1$};
			\fill (\x-4,\y-3) circle (.15);
			\fill (\x-4,\y-3) node [below] {$6 $};
			\fill(\x,\y-3) circle (.15);
			\fill (\x,\y-3) node [below] {$ 5 $};
			\fill(\x+2,\y) circle (.15);
			\fill (\x+2,\y) node [right] {$4 $};
			\fill(\x,\y+3) circle (.15);
			\fill (\x,\y+3) node [above] {$ 3 $};
			\fill (\x-4,\y+3) circle (.15);
			\fill (\x-4,\y+3) node [above] {$2 $};

			\draw [->] (\x-6,\y)--(\x-4,\y-3); 
			\draw [->] (\x,\y-3) -- (\x-4,\y-3); 
			\draw [->] (\x+2,\y)--(\x,\y-3); 
			\draw [<-] (\x,\y+3) -- (\x+2,\y); 
			\draw [->] (\x-4,\y+3)--(\x,\y+3); 
			\draw [<-] (\x-6,\y) -- (\x-4,\y+3); 

			}
		}
	\end{tikzpicture}
	}
	\end{figure} 
	
	\medskip
	
	As described, we consider all maximal clockwise paths in $Q_1$, and all maximal counter-clockwise paths in $Q_2$ and contract them to a single vertex:
	
	\begin{figure}[h!]
\centering
\subfigure{
	\begin{tikzpicture}[scale = .25]
		\tikzstyle{every node} = [font = \small]
		\foreach \x in {0}
		{
			\foreach \y in {-8}
			{
			\fill (\x-10,\y+3) node {$Q_1:$};
			\fill (\x-6,\y) circle (.15);
			\fill (\x-6,\y) node [left] {$1$};
			\fill (\x-4,\y-3) circle (.15);
			\fill (\x-4,\y-3) node [below] {$6 $};
			\fill(\x,\y-3) circle (.15);
			\fill (\x,\y-3) node [below] {$ 5 $};
			\fill(\x+2,\y) circle (.15);
			\fill (\x+2,\y) node [right] {$4 $};
			\fill(\x,\y+3) circle (.15);
			\fill (\x,\y+3) node [above] {$ 3 $};
			\fill (\x-4,\y+3) circle (.15);
			\fill (\x-4,\y+3) node [above] {$2 $};
			
			\draw [->] (\x-6,\y)--(\x-4,\y-3); 
			\draw [->] (\x,\y-3) -- (\x-4,\y-3); 
			\draw [->] (\x+2,\y)--(\x,\y-3); 
			\draw [<-] (\x,\y+3) -- (\x+2,\y); 
			\draw [->] (\x-4,\y+3)--(\x,\y+3); 
			\draw [<-] (\x-6,\y) -- (\x-4,\y+3); 

			\draw[red] (\x-2,\y+3) ellipse (2.5cm and .5cm);
			\draw[red] plot [smooth cycle] coordinates {(\x+2.35,\y+.25) (\x,\y-3.25) (\x-4.25,\y-3.25) (\x-4.25,\y-2.75) (\x-.15,\y-2.5) (\x+1.75,\y+.25)};
			
	}
		}
	\end{tikzpicture}
	}
	\quad
	\subfigure{
		\begin{tikzpicture}[scale = .25]
		\tikzstyle{every node} = [font = \small]
		\foreach \x in {0}
		{
			\foreach \y in {-8}
			{
			\fill (\x-10,\y+3) node {$Q_2:$};
			\fill (\x-6,\y) circle (.15);
			\fill (\x-6,\y) node [left] {$1$};
			\fill (\x-4,\y-3) circle (.15);
			\fill (\x-4,\y-3) node [below] {$6 $};
			\fill(\x,\y-3) circle (.15);
			\fill (\x,\y-3) node [below] {$ 5 $};
			\fill(\x+2,\y) circle (.15);
			\fill (\x+2,\y) node [right] {$4 $};
			\fill(\x,\y+3) circle (.15);
			\fill (\x,\y+3) node [above] {$ 3 $};
			\fill (\x-4,\y+3) circle (.15);
			\fill (\x-4,\y+3) node [above] {$2 $};

			\draw [->] (\x-6,\y)--(\x-4,\y-3); 
			\draw [->] (\x,\y-3) -- (\x-4,\y-3); 
			\draw [->] (\x+2,\y)--(\x,\y-3); 
			\draw [<-] (\x,\y+3) -- (\x+2,\y); 
			\draw [->] (\x-4,\y+3)--(\x,\y+3); 
			\draw [<-] (\x-6,\y) -- (\x-4,\y+3); 

			\draw[rotate around={-55:(\x+1,\y+1.5)},red] (\x+1,\y+1.5) ellipse (2.5cm and .5cm);
			\draw[red] plot [smooth cycle] coordinates {(\x-4.25,\y+3.25) (\x-6.25,\y) (\x-4.25,\y-3.25) (\x-3.75,\y-2.75) 					(\x-5.5,\y-.25) (\x-3.75,\y+2.75)};
			}
		}
	\end{tikzpicture}
	}
	\end{figure}

\begin{figure}[h!]
\centering
\subfigure{
	\begin{tikzpicture}[scale = .25]
		\tikzstyle{every node} = [font = \small]
		\foreach \x in {0}
		{
			\foreach \y in {-8}
			{
			\fill (\x-10,\y+4) node {$Q_{\partial'}:$};
			\fill (\x-6,\y) circle (.15);
			\fill (\x-6,\y) node [left] {${0_{\partial}} = 1$};
			\fill[red] (\x,\y-3) circle (.15);
			\fill (\x,\y-3) node [below] {$u_{4,6}$};
			\fill [red] (\x-2,\y+3) circle (.15);
			\fill (\x-2,\y+3) node [above] {$u_{2,3}$};

			\draw [->] (\x-6,\y) -- (\x,\y-3); 
			\draw [->] (\x,\y-3)--(\x-2,\y+3); 
			\draw [<-] (\x-6,\y) -- (\x-2,\y+3); 
			
			}
		}
	\end{tikzpicture}
	}
	\quad
	\subfigure{
		\begin{tikzpicture}[scale = .25]
		\tikzstyle{every node} = [font = \small]
		\foreach \x in {0}
		{
			\foreach \y in {-8}
			{
			\fill (\x-10,\y+5) node {$Q_\partial:$};
			\fill[red] (\x-6,\y) circle (.15);
			\fill (\x-6,\y) node [left] {$w_{2,6}$};
			\fill(\x,\y-3) circle (.15);
			\fill (\x,\y-3) node [right] {$5 = {1_{\partial'}}$};
			\fill[red](\x+1,\y+1.5) circle (.15);
			\fill (\x+1,\y+1.5) node [right] {$w_{4,3}$};

			\draw [->] (\x-6,\y)--(\x+1,\y+1.5); 
			\draw [->] (\x,\y-3) -- (\x-6,\y); 
			\draw [->] (\x+1,\y+1.5)--(\x,\y-3); 
			}
		}
	\end{tikzpicture}
	}
	\end{figure} 
	
We can check that this is indeed the quiver of the asymptotic triangulation $D_z^{+\infty}(T)$.

\begin{center}
	\begin{tikzpicture}[scale = .4][h!]
		\tikzstyle{every node} = [font = \small]
		\foreach \x in {0}
		{
			\foreach \y in {-8}
			{
				\draw[<-] (\x-8,\y+4) -- (\x+2,\y+4);
				\fill (\x-9,\y+4) node {$\partial'$};
				\draw[->] (\x-8,\y-3) -- (\x+2,\y-3);
				\fill (\x-9,\y-3) node {$\partial$};

				\foreach \t in {-6,-4,-2,0}
				{
					\fill (\x+\t,\y+4) circle (.1);
					\fill (\x+\t,\y-3) circle (.1);
				}
				
				\fill (\x-6,\y+4) node [above] {$0_{\partial'}$};
				\fill (\x-4,\y+4) node [above] {$2_{\partial'}$};
				\fill (\x-2,\y+4) node [above] {$1_{\partial'}$};
				\fill (\x+0,\y+4) node [above] {$0_{\partial'}$};

				\fill (\x-6,\y-3) node [below] {$0_{\partial}$};
				\fill (\x-4,\y-3) node [below] {$1_{\partial}$};
				\fill (\x-2,\y-3) node [below] {$2_{\partial}$};
				\fill (\x+0,\y-3) node [below] {$0_{\partial}$};
				
				\Prufer{\x-6}{\y-3}{8}{}
				\fill (\x-6,\y-1.5) node [right] {\tiny{$\pi_{0_{\partial}}$}};
				\Prufer{\x-4}{\y-3}{6}{}
				\fill (\x-4,\y-1.5) node [right] {\tiny{$\pi_{1_{\partial}}$}};
				\Prufer{\x-2}{\y-3}{4}{}
				\fill (\x-2,\y-1.5) node [right] {\tiny{$\pi_{2_{\partial}}$}};
				\Prufer{\x-0}{\y-3}{2}{}
				\fill (\x-0,\y-1.5) node [right] {\tiny{$\pi_{0_{\partial}}$}};
				\hPrufer{\x-6}{\y+4}{2}{}
				\fill (\x-6,\y+2.5) node [left] {\tiny{$\pi_{0_{\partial'}}$}};
				\hPrufer{\x-4}{\y+4}{4}{}
				\fill (\x-4,\y+2.5) node [left] {\tiny{$\pi_{2_{\partial'}}$}};
				\hPrufer{\x-2}{\y+4}{6}{}
				\fill (\x-2,\y+2.5) node [left] {\tiny{$\pi_{1_{\partial'}}$}};
				\hPrufer{\x-0}{\y+4}{8}{}
				\fill (\x-0,\y+2.5) node [left] {\tiny{$\pi_{0_{\partial'}}$}};				
			}
		}	
		\end{tikzpicture}
\end{center}
\end{example}

\begin{theorem}
Let $T$ be a bridging triangulation, and $Q_T$ the associated quiver. Then
$$Q_{\Cox^{+\infty}(T)} \cong Q_{D^{+\infty}_z(T)} \cong Q_{D^{-\infty}_z(T)}.$$
\end{theorem}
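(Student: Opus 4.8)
The plan is to derive the theorem as a short chain of identifications built entirely from results already in hand, rather than re-analyzing the limit from scratch. The first and main observation is that $\Cox^{+\infty}(T)$ and $D^{+\infty}_z(T)$ are literally the \emph{same} asymptotic triangulation of $C_{p,q}$: by definition $\Cox^{+\infty}(T) = \lim_{n\to\infty}\Cox^n(T)$, and Proposition \ref{cox_inf:thm} identifies this limit with $D^{+\infty}_z(T)$. Since the quiver attached to an (asymptotic) triangulation depends only on the collection of arcs (Definition \ref{def:QT}) --- vertices in bijection with arcs, arrows recording which arcs bound a common triangle in the appropriate orientation --- equal triangulations have equal associated quivers. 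Hence $Q_{\Cox^{+\infty}(T)} = Q_{D^{+\infty}_z(T)}$, on the nose and not merely up to isomorphism.

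For the remaining isomorphism, $Q_{D^{+\infty}_z(T)} \cong Q_{D^{-\infty}_z(T)}$ is exactly the Corollary stating $Q_{D^{+\infty}(T)} \cong Q_{D^{-\infty}(T)}$ proved above: there the point is that the orientation of an arrow between two strictly asymptotic arcs is dictated solely by which boundary component the two arcs are based at (left-to-right along $\partial'$, right-to-left along $\partial$), independently of whether the spiraling is positive or negative, while the subquiver coming from the internal peripheral triangles of $T$ is unaffected by any Dehn twist. Concatenating the equality from the first step with this isomorphism yields $Q_{\Cox^{+\infty}(T)} \cong Q_{D^{+\infty}_z(T)} \cong Q_{D^{-\infty}_z(T)}$, as claimed; combined with the description $Q_{D^{+\infty}_z(T)} \cong Q_\partial \sqcup Q_{\partial'}$, it also records that Algorithm \ref{algo1} computes $Q_{\Cox^{+\infty}(T)}$ directly from $Q_T$.

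The only delicate point --- and the closest thing to an obstacle --- is making sure that ``$Q_{\Cox^{+\infty}(T)}$'' is read as the quiver of the limiting asymptotic triangulation, not as some separately defined ``limit of quivers''. Indeed, each finite stage $\Cox^n(T)$ is a genuine triangulation with $Q_{\Cox^n(T)} \cong Q_T$ by invariance of the quiver under the mapping class group, so the quivers do not stabilize in any naive sense; it is only the underlying \emph{triangulations} that stabilize arc by arc, in the sense of Section 4, and the operator $Q_{(-)}$ is then applied to that limiting configuration. Once this reading is fixed, the theorem reduces to Proposition \ref{cox_inf:thm} together with the Corollary above, so no new estimates or constructions are needed and the proof is essentially a two-line argument.
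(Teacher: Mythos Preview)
Your proof is correct and follows exactly the same approach as the paper: the first isomorphism comes from Proposition~\ref{cox_inf:thm} (so the two triangulations coincide and hence have the same quiver), and the second is precisely Corollary~\ref{Qpos_Qneg}. Your additional remark about reading $Q_{\Cox^{+\infty}(T)}$ as the quiver of the limiting triangulation rather than a limit of quivers is a helpful clarification, but not part of the paper's argument.
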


\begin{proof}
We have that $\Cox^{+\infty}(T) = D^{+\infty}_z(T)$ as triangulations, and thus $Q_{\Cox^{+\infty}(T)} \cong Q_{D^{+\infty}_z(T)}$. The second isomorphism is the result from Corollary \ref{Qpos_Qneg}.
\end{proof}

We have described an algorithm for obtaining a quiver from an asymptotic triangulation. A natural question to ask is whether the algorithm can be used on a quiver when we don't know the associated triangulation. To do this, we need to work with the \emph{shape} $Q^b$, where $Q^b$ is the full subquiver obtained by removing arrows that belong only to internal triangles of $Q_T$. We have the alternate algorithm:

\begin{algorithm}
\caption{Constructing $Q_\partial$, $Q_{\partial'}$ from $Q^b$.}
  \begin{algorithmic}[1]
  \STATE Draw two copies $Q_1, Q_2$ of $Q_T$.
  \STATE  Do 1 \& 2 as in Algorithm \ref{algo1} to paths in $Q^b \cap Q_1$ and $Q^b \cap Q_2$, respectively.
  \STATE Draw result, killing all subgraphs that share an edge with the contracted path $P$ in Step 2 above.
  \end{algorithmic}
\end{algorithm}

\begin{example}

Consider the quiver $Q_T$ and the full subquiver $Q^b$ of $Q_T$.

\begin{figure}[h!]
\centering
\subfigure{\begin{tikzpicture}[scale = .2]
		\tikzstyle{every node} = [font = \small]
		\foreach \x in {0}
		{
			\foreach \y in {-8}
			{
			\fill (\x-10,\y+3) node {$Q_T:$};
			\fill (\x-6,\y) circle (.15);
			\fill (\x-6,\y) node [left] {$2$};
			\fill (\x-3,\y+4) circle (.15);
			\fill (\x-3,\y+4) node [above] {$1 $};
			\fill(\x,\y-4) circle (.15);
			\fill (\x,\y-4) node [below] {$ 3 $};
			\fill(\x+6,\y) circle (.15);
			\fill (\x+6,\y) node [right] {$4 $};
			\fill(\x+3,\y+4) circle (.15);
			\fill (\x+3,\y+4) node [above] {$ 5 $};
			\fill (\x+2,\y-0.5) circle (.15);
			\fill (\x+1,\y-1) node [above] {$6 $};
			\fill (\x-5,\y-4) circle (.15);
			\fill (\x-5,\y-4) node [below] {$7 $};
			
			\draw [<-] (\x+2.7,\y+4)--(\x-2.7,\y+4); 
			\draw [->] (\x-3.3,\y+3.7) -- (\x-5.7,\y+.3); 
			\draw [<-] (\x+5.7,\y+.3)--(\x+3.3,\y+3.7); 
			\draw [->] (\x-.3,\y-3.7) -- (\x-5.7,\y-.3); 
			\draw [->] (\x+.3,\y-3.7)--(\x+5.7,\y-.3); 
			\draw [->] (\x-5.9,\y-.4) -- (\x-5,\y-3.7); 
			\draw [->] (\x-4.7,\y- 4) -- (\x-.3,\y-4); 
			\draw [->] (\x+5.7,\y) -- (\x+2.3,\y-.4); 
			\draw [->] (\x+1.7,\y-.7) -- (\x+.1,\y-3.6); 

			}
		}
	\end{tikzpicture}}
	\quad
	\subfigure{\begin{tikzpicture}[scale = .2]
		\tikzstyle{every node} = [font = \small]
		\foreach \x in {0}
		{
			\foreach \y in {-8}
			{
			\fill (\x-10,\y+3) node {$Q^b:$};
			\fill (\x-6,\y) circle (.15);
			\fill (\x-6,\y) node [left] {$2$};
			\fill (\x-3,\y+4) circle (.15);
			\fill (\x-3,\y+4) node [above] {$1 $};
			\fill(\x,\y-4) circle (.15);
			\fill (\x,\y-4) node [below] {$ 3 $};
			\fill(\x+6,\y) circle (.15);
			\fill (\x+6,\y) node [right] {$4 $};
			\fill(\x+3,\y+4) circle (.15);
			\fill (\x+3,\y+4) node [above] {$ 5 $};
						
			\draw [<-] (\x+2.7,\y+4)--(\x-2.7,\y+4); 
			\draw [->] (\x-3.3,\y+3.7) -- (\x-5.7,\y+.3); 
			\draw [<-] (\x+5.7,\y+.3)--(\x+3.3,\y+3.7); 
			\draw [->] (\x-.3,\y-3.7) -- (\x-5.7,\y-.3); 
			\draw [->] (\x+.3,\y-3.7)--(\x+5.7,\y-.3); 

			}
		}
	\end{tikzpicture}}
	\end{figure}
We first draw two copies $Q_1$ and $Q_2$ of $Q_T$:
	
\begin{figure}[h!]
\centering
\subfigure{\begin{tikzpicture}[scale = .25]
		\tikzstyle{every node} = [font = \small]
		\foreach \x in {0}
		{
			\foreach \y in {-8}
			{
			\fill (\x-10,\y+3) node {$Q_1:$};
			\fill (\x-6,\y) circle (.15);
			\fill (\x-6,\y) node [left] {$2$};
			\fill (\x-3,\y+4) circle (.15);
			\fill (\x-3,\y+4) node [above] {$1 $};
			\fill(\x,\y-4) circle (.15);
			\fill (\x,\y-4) node [below] {$ 3 $};
			\fill(\x+6,\y) circle (.15);
			\fill (\x+6,\y) node [right] {$4 $};
			\fill(\x+3,\y+4) circle (.15);
			\fill (\x+3,\y+4) node [above] {$ 5 $};
			\fill (\x+2,\y-0.5) circle (.15);
			\fill (\x+1,\y-1) node [above] {$6 $};
			\fill (\x-5,\y-4) circle (.15);
			\fill (\x-5,\y-4) node [below] {$7 $};
			
			\draw [<-] (\x+2.7,\y+4)--(\x-2.7,\y+4); 
			\draw [->] (\x-3.3,\y+3.7) -- (\x-5.7,\y+.3); 
			\draw [<-] (\x+5.7,\y+.3)--(\x+3.3,\y+3.7); 
			\draw [->] (\x-.3,\y-3.7) -- (\x-5.7,\y-.3); 
			\draw [->] (\x+.3,\y-3.7)--(\x+5.7,\y-.3); 
			\draw [->] (\x-5.9,\y-.4) -- (\x-5,\y-3.7); 
			\draw [->] (\x-4.7,\y- 4) -- (\x-.3,\y-4); 
			\draw [->] (\x+5.7,\y) -- (\x+2.3,\y-.4); 
			\draw [->] (\x+1.7,\y-.7) -- (\x+.1,\y-3.6); 

			}
		}
	\end{tikzpicture}}
	\quad
	\subfigure{\begin{tikzpicture}[scale = .25]
		\tikzstyle{every node} = [font = \small]
		\foreach \x in {0}
		{
			\foreach \y in {-8}
			{
			\fill (\x-10,\y+3) node {$Q_2:$};
			\fill (\x-6,\y) circle (.15);
			\fill (\x-6,\y) node [left] {$2$};
			\fill (\x-3,\y+4) circle (.15);
			\fill (\x-3,\y+4) node [above] {$1 $};
			\fill(\x,\y-4) circle (.15);
			\fill (\x,\y-4) node [below] {$ 3 $};
			\fill(\x+6,\y) circle (.15);
			\fill (\x+6,\y) node [right] {$4 $};
			\fill(\x+3,\y+4) circle (.15);
			\fill (\x+3,\y+4) node [above] {$ 5 $};
			\fill (\x+2,\y-0.5) circle (.15);
			\fill (\x+1,\y-1) node [above] {$6 $};
			\fill (\x-5,\y-4) circle (.15);
			\fill (\x-5,\y-4) node [below] {$7 $};
			
			\draw [<-] (\x+2.7,\y+4)--(\x-2.7,\y+4); 
			\draw [->] (\x-3.3,\y+3.7) -- (\x-5.7,\y+.3); 
			\draw [<-] (\x+5.7,\y+.3)--(\x+3.3,\y+3.7); 
			\draw [->] (\x-.3,\y-3.7) -- (\x-5.7,\y-.3); 
			\draw [->] (\x+.3,\y-3.7)--(\x+5.7,\y-.3); 
			\draw [->] (\x-5.9,\y-.4) -- (\x-5,\y-3.7); 
			\draw [->] (\x-4.7,\y- 4) -- (\x-.3,\y-4); 
			\draw [->] (\x+5.7,\y) -- (\x+2.3,\y-.4); 
			\draw [->] (\x+1.7,\y-.7) -- (\x+.1,\y-3.6); 

			}
		}
	\end{tikzpicture}}
	\end{figure}
	
	Then we apply Algorithm  \ref{algo1}  to $Q_1 \cap Q^b$ and $Q_2 \cap Q^b$ and kill subgraphs:
	\begin{figure}[h!]
	\medskip
	\centering
	\subfigure{	
		\begin{tikzpicture}[scale = .25]
		\tikzstyle{every node} = [font = \small]
		\foreach \x in {0}
		{
			\foreach \y in {-8}
			{
			\fill (\x-10,\y+3) node {$Q_1:$};
			\fill (\x-6,\y) circle (.15);
			\fill (\x-6,\y) node [left] {$2$};
			\fill (\x-3,\y+4) circle (.15);
			\fill (\x-3,\y+4) node [above] {$1 $};
			\fill(\x,\y-4) circle (.15);
			\fill (\x,\y-4) node [below] {$ 3 $};
			\fill(\x+6,\y) circle (.15);
			\fill (\x+6,\y) node [right] {$4 $};
			\fill(\x+3,\y+4) circle (.15);
			\fill (\x+3,\y+4) node [above] {$ 5 $};
			\fill (\x+2,\y-0.5) circle (.15);
			\fill (\x+1,\y-1) node [above] {$6 $};
			\fill (\x-5,\y-4) circle (.15);
			\fill (\x-5,\y-4) node [below] {$7 $};
			
			\draw [ultra thick, <-] (\x+2.7,\y+4)--(\x-2.7,\y+4); 
			\draw [->] (\x-3.3,\y+3.7) -- (\x-5.7,\y+.3); 
			\draw [ultra thick,<-] (\x+5.7,\y+.3)--(\x+3.3,\y+3.7); 
			\draw [ultra thick,->] (\x-.3,\y-3.7) -- (\x-5.7,\y-.3); 
			\draw [->] (\x+.3,\y-3.7)--(\x+5.7,\y-.3); 
			\draw [->] (\x-5.9,\y-.4) -- (\x-5,\y-3.7); 
			\draw [->] (\x-4.7,\y- 4) -- (\x-.3,\y-4); 
			\draw [->] (\x+5.7,\y) -- (\x+2.3,\y-.4); 
			\draw [->] (\x+1.7,\y-.7) -- (\x+.1,\y-3.6); 

			\draw[red] (\x-5, \y-4) circle (1.2);
			\draw[red] (\x-6.1, \y-2.7) -- (\x-3.8, \y-5.2);
			\draw[red] (\x-6.1, \y-5.2) -- (\x-3.8, \y-2.7);

			}
		}
	\end{tikzpicture}}
	\quad
	\subfigure{\begin{tikzpicture}[scale = .25]
		\tikzstyle{every node} = [font = \small]
		\foreach \x in {0}
		{
			\foreach \y in {-8}
			{
			\fill (\x-10,\y+3) node {$Q_2$};
			\fill (\x-6,\y) circle (.15);
			\fill (\x-6,\y) node [left] {$2$};
			\fill (\x-3,\y+4) circle (.15);
			\fill (\x-3,\y+4) node [above] {$1 $};
			\fill(\x,\y-4) circle (.15);
			\fill (\x,\y-4) node [below] {$ 3 $};
			\fill(\x+6,\y) circle (.15);
			\fill (\x+6,\y) node [right] {$4 $};
			\fill(\x+3,\y+4) circle (.15);
			\fill (\x+3,\y+4) node [above] {$ 5 $};
			\fill (\x+2,\y-0.5) circle (.15);
			\fill (\x+1,\y-1) node [above] {$6 $};
			\fill (\x-5,\y-4) circle (.15);
			\fill (\x-5,\y-4) node [below] {$7 $};
			
			\draw [,<-] (\x+2.7,\y+4)--(\x-2.7,\y+4); 
			\draw [ultra thick,->] (\x-3.3,\y+3.7) -- (\x-5.7,\y+.3); 
			\draw [<-] (\x+5.7,\y+.3)--(\x+3.3,\y+3.7); 
			\draw [->] (\x-.3,\y-3.7) -- (\x-5.7,\y-.3); 
			\draw [ultra thick,->] (\x+.3,\y-3.7)--(\x+5.7,\y-.3); 
			\draw [->] (\x-5.9,\y-.4) -- (\x-5,\y-3.7); 
			\draw [->] (\x-4.7,\y- 4) -- (\x-.3,\y-4); 
			\draw [->] (\x+5.7,\y) -- (\x+2.3,\y-.4); 
			\draw [->] (\x+1.7,\y-.7) -- (\x+.1,\y-3.6); 
			
			\draw[red] (\x+2, \y-.5) circle (1.2);
			\draw[red] (\x+.8, \y+.6) -- (\x+3.2, \y-1.6);
			\draw[red] (\x+.8, \y-1.6) -- (\x+3.2, \y+.6);

			}
		}
	\end{tikzpicture}}
	\end{figure}
	
	The resulting quivers are:
	\begin{figure}[h!]	
	\centering
	\subfigure{
	
			\begin{tikzpicture}[scale = .25]
		\tikzstyle{every node} = [font = \small]
		\foreach \x in {0}
		{
			\foreach \y in {-8}
			{
			\fill (\x-10,\y+3) node {$Q_{\partial'}:$};
			\fill (\x-2,\y+4) circle (.15);
			\fill (\x-2,\y+4) node [above] {$u_{1,4} $};
			\fill(\x-2,\y-4) circle (.15);
			\fill (\x-2,\y-4) node [below] {$ u_{3,2} $};
			\fill (\x-2,\y-0) circle (.15);
			\fill (\x-2,\y-0) node [left] {$6 $};
			
			\draw [->] (\x-2, \y+3.6) -- (\x-2, \y+.4); 
			\draw [->] (\x-2,\y-.4)--(\x-2,\y-3.6); 
			\draw [->] (\x-2.5,\y+3.9) .. controls (\x-5.5, \y +2) and (\x-5.5, \y-2) .. (\x-2.5,\y-3.9); 
			\draw [<-] (\x-1.5,\y+3.9) .. controls (\x+1.5, \y +2) and (\x+1.5, \y-2) .. (\x-1.5,\y-3.9); 

			}
		}
	\end{tikzpicture}}
	\quad
	\subfigure{\begin{tikzpicture}[scale = .25]
		\tikzstyle{every node} = [font = \small]
		\foreach \x in {0}
		{
			\foreach \y in {-8}
			{
			\fill (\x-10,\y+3) node {$Q_{\partial}:$};

			\fill (\x-4,\y+4) circle (.15);
			\fill (\x-4,\y+4) node [above] {$w_{1,2} $};
			\fill (\x-4,\y-4) circle (.15);
			\fill (\x-4,\y-4) node [below] {$7 $};
			\fill (\x+4,\y+4) circle (.15);
			\fill (\x+4,\y+4) node [above] {$5 $};
			\fill (\x+4,\y-4) circle (.15);
			\fill (\x+4,\y-4) node [below] {$w_{3,4}$};
			
			\draw [->] (\x-3.7, \y+4) -- (\x +3.7, \y+4); 
			\draw [->] (\x-4, \y+3.7)-- (\x-4, \y -3.7); 
			\draw [->] (\x -3.7, \y-4) -- (\x +3.7, \y-4); 
			\draw [->] (\x +4, \y+3.7) -- (\x +4, \y-3.7); 
			\draw [<-] (\x-3.8, \y+3.8) -- (\x +3.8, \y-3.8); 
			
			}
		}
	\end{tikzpicture}}
	\end{figure}
	
	An example of a triangulation associated to this quiver is:

	\begin{center}
		\begin{tikzpicture}[xscale = .55, yscale = .35]
		\tikzstyle{every node} = [font = \small]
		\foreach \x in {0}
		{
			\foreach \y in {-8}
			{
				\draw[<-] (\x-7,\y+4) -- (\x+1,\y+4);
				\fill (\x-8,\y+4) node {$\partial'$};
				\draw[->] (\x-7,\y-3) -- (\x+1,\y-3);
				\fill (\x-8,\y-3) node {$\partial$};

				\foreach \t in {-6,-4,-2,0}
				{
					\fill (\x+\t,\y+4) circle (.1);
				}
				
				\fill (\x-6,\y-3) circle (.1);
				\fill (\x-4.5,\y-3) circle (.1);
				\fill (\x-3,\y-3) circle (.1);
				\fill (\x-1.5,\y-3) circle (.1);
				\fill (\x-0,\y-3) circle (.1);
				
				\fill (\x-6,\y+4) node [above] {$0_{\partial'}$};
				\fill (\x-4,\y+4) node [above] {$2_{\partial'}$};
				\fill (\x-2,\y+4) node [above] {$1_{\partial'}$};
				\fill (\x+0,\y+4) node [above] {$0_{\partial'}$};

				\fill (\x-6,\y-3) node [below] {$0_{\partial}$};
				\fill (\x-4.5,\y-3) node [below] {$1_{\partial}$};
				\fill (\x-3,\y-3) node [below] {$2_{\partial}$};
				\fill (\x-1.5,\y-3) node [below] {$3_{\partial}$};
				\fill (\x+0,\y-3) node [below] {$0_{\partial}$};

				\fill[] (\x-5.8,\y+.5) node [left] {\tiny{$d_1$}};
				\fill[] (\x-4.5,\y+2.5) node [left] {\tiny{$d_2$}};
				\fill[] (\x-3.5,\y+0.5) node [right] {\tiny{$d_3$}};
				\fill[] (\x-4.5,\y-1.9) node [above] {\tiny{$d_7$}};

				\fill[] (\x-2.6,\y-2) node [right] {\tiny{$d_4$}};
				\fill[] (\x-2,\y+3) node [below] {\tiny{$d_6$}};
				\fill[] (\x-0.2,\y-1.5) node [left] {\tiny{$d_5$}};
				\fill[] (\x+0,\y+.5) node [right] {\tiny{$d_1$}};

				\draw[] (\x-6,\y+4) -- (\x-6,\y-3);
				\draw[] (\x-4,\y+4) -- (\x-6,\y-3);
				\draw[] (\x-4,\y+4) -- (\x-3,\y-3);
				\draw[] (\x-0,\y+4) -- (\x-3,\y-3);
				\draw[] (\x-0,\y+4) -- (\x-1.5,\y-3);
				\draw[] (\x-0,\y+4) -- (\x-0,\y-3);
				
				\draw[] (\x-6,\y-3) .. controls (\x-5 ,\y-1.5) and (\x-4 , \y-1.5) .. (\x-3, \y-3);
				\draw[] (\x-4,\y+4) .. controls (\x-3, \y+2.5) and (\x-1,\y+2.5) .. (\x-0,\y+4);

			}
		}
		\end{tikzpicture}
		\end{center}
	
\end{example}

It is possible to distinguish the vertices that correspond to bridging (resp. strictly asymptotic) arcs in the associated triangulation from those that correspond to peripheral arcs. Vertices that correspond to bridging (strictly asymptotic) arcs form an un-oriented (clockwise-oriented) cycle $C$ in the quiver. This cycle actually gives us the full subquiver $Q^b$. Vertices that correspond to peripheral arcs lie in counter-clockwise oriented cycles in the quiver, and these counter-clockwise oriented cycle shares an edge with $C$, that is, an edge between two vertices corresponding to bridging (strictly asymptotic) arcs. Bastian, in \cite{B}, denotes these vertices by $z_\alpha$, and describes the quivers $Q_\alpha$ that branch off from $C$.

Using the full subquiver allows us to construct an ``asymptotic quiver" without knowing the triangulation. There are restrictions on the types of quivers for which these algorithms work. The quivers need to be associated to a triangulation of a surface described in this paper. 

\appendix
\section{Quivers with potentials}

In this appendix, we describe an alternate way of performing quiver mutation by using quivers with potentials. The authors Derksen, Weymann, and Zelevinsky developed a mutation theory of quivers in \cite{DWZ} using potentials, which lifts quiver mutation from the combinatorial level to the algebraic level. This provides a representation-theoretic interpretation of quiver mutation and ultimately leads to the notion of mutation of representations of quivers with potentials. For the convenience of the reader, we will recall the necessary background needed (cf. \cite{DWZ}). The definitions and notation in this appendix are taken from \cite{DLF} and \cite{DWZ}.

Let $Q$ be a quiver. For each vertex $i \in Q_0$ we have the path of length 0, denoted by $e_i$. $A^l$ denotes the $\mathbb{C}$-vector space with basis the set of paths of length $l \geq 0$. We use the notation $R = A^0$ and $A = A^1$. Note that $R$ is the vector space with basis the set of length-0 paths ($\dim R = |Q_0|$), and $A$ is the vector space with basis the set of arrows of $Q$. If we define $e_ie_j = \delta_{ij}e_i$, then $R$ becomes a commutative $\mathbb{C}$-algebra. If we define $e_i\alpha = \delta_{i,h(\alpha)}\alpha$ and $\alpha e_i = \delta_{i,t(\alpha)}\alpha$ then $A^{l > 0}$ becomes an $R$-$R$-bimodule for every $l > 0$.

\begin{definition}
The \emph{path algebra of $Q$} is the $\mathbb{C}$-vector space $$R\langle Q \rangle = \bigoplus_{l = 0}^\infty A^l.$$ 
\end{definition}

The path algebra can also be defined as the (graded) tensor algebra, and for each $i,j \in Q_0$, the component $R\langle Q \rangle_{i,j} = e_i R\langle Q \rangle e_j$ is called the \emph{space of paths from $j$ to $i$}.

\begin{definition}
The \emph{complete path algebra of $Q$} is the $\mathbb{C}$-vector space $R\langle \langle Q \rangle \rangle $ consisting of all possibly infinite linear combinations of paths in $Q$, that is: $$R\langle \langle Q \rangle \rangle = \prod_{l = 0}^{\infty} A^l.$$
\end{definition}

$R\langle Q \rangle $ has multiplication induced by concatenation of paths and this multiplication extends naturally to $R\langle \langle Q \rangle  \rangle $. $R\langle Q \rangle $ is a dense subalgebra of $R\langle \langle Q \rangle \rangle $ under the $\mathfrak{m}$-adic topology for $\mathfrak{m}$ the two-sided ideal of $R\langle \langle Q \rangle \rangle $ generated by the arrows of $Q$ . The fundamental system of open neighborhoods of this topology around 0 is given by the powers of the ideal $\mathfrak{m}$.

\subsection{Quiver mutation}

For a quiver $Q$, an $l$-cycle in $Q$ is a path $\alpha_1\alpha_2 \ldots \alpha_l$ with $l > 0$ such that $h(\alpha_1) = t(\alpha_l)$. If $\alpha_1 \alpha_2 \ldots \alpha_l$ is an $l$-cycle in $Q$, then so is $\alpha_i \alpha_{i+1} \ldots \alpha_{i-2} \alpha_{i-1}$ for $i = 2, \ldots, l$ (reducing indices $ \modm l$). We say that $\alpha_i \alpha_{i+1} \ldots \alpha_{i-1}$ can be obtained from $\alpha_1 \alpha_2 \ldots \alpha_l$ by \emph{rotation}.

\begin{definition}
Let $Q$ be a quiver. An element $W$ of $R \langle \langle Q \rangle \rangle$ is called a \emph{potential} if it is a possibly infinite linear combination of cycles of $Q$ such that no two cycles appearing in $W$ with non-zero coefficients can be obtain from each other by rotation. If $W$ is a potential on $Q$, we say that the pair $(Q,W)$ is a quiver with potential, or a QP.
\end{definition}

\begin{definition}
Let $Q, Q'$ be quivers with the same vertex set $Q_0 = Q_0'$. 
\begin{enumerate}
\item Two potentials $W$ and $W'$ on $Q$ are \emph{cyclically equivalent} if $W-W'$ lies in the closure of the vector subspace of $R \langle \langle Q \rangle \rangle$ spanned by all the elements of the form $\alpha_1 \ldots \alpha_l - \alpha_2 \ldots \alpha_l \alpha_1$ with $\alpha_1 \ldots \alpha_l$ a cycle of positive length.

\item We say that two QPs $(Q,W)$ and $(Q',W')$ are \emph{right-equivalent} if there exists a $\mathbb{C}$-algebra isomorphism $\phi : R \langle \langle Q \rangle \rangle \rightarrow R \langle \langle Q' \rangle \rangle $ satisfying $\phi(e_i) = e_i$ $\forall i \in Q_0 = Q_0'$ and such that $\phi(W)$ is cyclically-equivalent to $W'$.

\item For each arrow $\alpha \in Q_1$ and each cycle $\alpha_1 \ldots \alpha_l$ in $Q$ we define the \emph{cyclic derivative} $$\partial_\alpha(\alpha_1 \ldots \alpha_l) = \sum_{k=1}^{l} \delta_{\alpha,\alpha_k} \alpha_{k+1} \cdots \alpha_l \,\alpha_1 \cdots \alpha_{k-1}$$ and extend $\partial_\alpha$ by linearity and continuity so that $\partial_\alpha(W)$ is defined for every potential $W$.

\item The \emph{Jacobian ideal} $J(W)$ is the topological closure of the two-sided ideal of $R \langle \langle Q \rangle \rangle$ generated by $\{ \partial_\alpha(W) | \alpha \in Q_1 \}$, and the \emph{Jacobian algebra} $P(Q,W)$ is the quotient algebra $R \langle \langle Q \rangle \rangle / J(W)$.

\item A $QP$ is \emph{trivial} if $W \in A^2$ and $\{ \partial_\alpha(W) | \alpha \in Q_1 \}$ spans $A$ as a $\mathbb{C}$-vector space.

\item A QP is \emph{reduced} if the degree-2 component of $W$ is 0, that is, if the expression of $W$ involves no 2-cycles.

\item The \emph{direct sum} $Q \oplus Q'$ is the quiver whose vertex set is $Q_0 = Q_0'$ and whose arrow set is the disjoint union $Q_1 \sqcup Q_1'$.

\item The \emph{direct sum of two QPs} $(Q,W)$ and $(Q',W')$ is the QP $(Q,W) \oplus (Q',W') = (Q \oplus Q', W+W')$.
\end{enumerate}
\end{definition}

The following proposition then follows:

\begin{proposition}
If $\varphi: R \langle \langle Q \rangle \rangle \rightarrow R \langle \langle Q' \rangle \rangle$ is a right-equivalence between $(Q,W)$ and $(Q',S')$, then $\varphi$ sends $J(W)$ onto $J(W')$ and therefore induces an algebra isomorphism $P(Q,W) \rightarrow P(Q',W')$.
\end{proposition}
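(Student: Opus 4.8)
The plan is to reduce the statement to the single identity $\varphi(J(W)) = J(\varphi(W))$ and then to establish that identity by a non-commutative chain rule for the cyclic derivatives $\partial_\alpha$. For the reduction I would record two elementary observations. First, cyclically equivalent potentials have the same Jacobian ideal: if $W-W'$ lies in the closed span of the elements $\alpha_1\cdots\alpha_l-\alpha_2\cdots\alpha_l\alpha_1$, then $\partial_\alpha$ annihilates each such element, because the two cyclic derivatives agree term by term after reindexing; hence $\partial_\alpha(W)=\partial_\alpha(W')$ for every arrow $\alpha$ and $J(W)=J(W')$. Applying this to $\varphi(W)$ and $W'$ gives $J(\varphi(W))=J(W')$, so it suffices to prove $\varphi(J(W))=J(\varphi(W))$. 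Second, since $\varphi(e_i)=e_i$ and $\varphi$ is an algebra isomorphism, $\varphi$ carries the Jacobson radical of $R\langle\langle Q\rangle\rangle$ onto that of $R\langle\langle Q'\rangle\rangle$ (every element of $\mathfrak{m}$ is topologically nilpotent, so $\mathfrak{m}\subseteq\mathrm{rad}$, and $R\langle\langle Q\rangle\rangle/\mathfrak{m}\cong R$ is semisimple, so $\mathrm{rad}\subseteq\mathfrak{m}$); consequently $\varphi$ is a homeomorphism for the $\mathfrak{m}$-adic topologies, $\varphi(\partial_\alpha(W))$ is well defined, and $\varphi(J(W))$ is again a closed two-sided ideal. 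Granting the core identity, we get $\varphi(J(W))=J(\varphi(W))=J(W')$, and then $\varphi$ descends to a bijective algebra homomorphism $\bar\varphi:P(Q,W)=R\langle\langle Q\rangle\rangle/J(W)\to R\langle\langle Q'\rangle\rangle/J(W')=P(Q',W')$, which is the asserted isomorphism.

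For the core identity I would prove the chain rule: for any arrow $c$ of $Q'$ and any potential $W$ on $Q$,
\[
\partial_c(\varphi(W)) \;=\; \sum_{a\in Q_1}\ \sum_{(\lambda,\rho)}\ \rho\,\varphi(\partial_a(W))\,\lambda,
\]
where, for each arrow $a$ of $Q$, the inner sum runs over the occurrences of $c$ inside the paths composing $\varphi(a)$, an occurrence being a factorization $\lambda\,c\,\rho$ of such a path. The proof is a direct computation: for a cyclic word $u=a_1\cdots a_l$ of $W$, expand $\varphi(u)=\varphi(a_1)\cdots\varphi(a_l)$ and apply $\partial_c$; each contribution comes from selecting a factor $\varphi(a_j)$, an occurrence $\lambda\,c\,\rho$ of $c$ inside it, and rotating cyclically, which produces $\rho\,\bigl(\varphi(a_{j+1})\cdots\varphi(a_{j-1})\bigr)\lambda$. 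Summing over $j$ and grouping the factors according to the arrow $a=a_j$ converts $\sum_{j:\,a_j=a}\varphi(a_{j+1})\cdots\varphi(a_{j-1})$ into $\varphi(\partial_a(u))$, which yields the displayed formula after extension by linearity and $\mathfrak{m}$-adic continuity. Since $\varphi(\partial_a(W))\in\varphi(J(W))$ and $\varphi(J(W))$ is a closed two-sided ideal, the formula shows $\partial_c(\varphi(W))\in\varphi(J(W))$ for every $c$, hence $J(\varphi(W))\subseteq\varphi(J(W))$. Applying the same chain rule to the isomorphism $\varphi^{-1}$ and the potential $\varphi(W)$, and using $\varphi^{-1}(\varphi(W))=W$, gives $J(W)\subseteq\varphi^{-1}(J(\varphi(W)))$, i.e. $\varphi(J(W))\subseteq J(\varphi(W))$; the two inclusions give equality.

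The main obstacle is the chain-rule computation itself: keeping the bookkeeping correct when an arrow $a$ repeats inside the cyclic word $u$ and when $c$ occurs several times inside $\varphi(a)$, and justifying the passage from finite cyclic words to arbitrary (possibly infinite) potentials through $\mathfrak{m}$-adic convergence — in particular that all the sums converge and that $\partial_c$ and $\varphi$ commute with these limits. Once the chain rule is available, the reduction, the invariance of $J$ under cyclic equivalence, and the descent to the quotient are all formal.
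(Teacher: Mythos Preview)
The paper does not actually prove this proposition: it is stated without proof as background material borrowed from \cite{DWZ} (where it is Proposition~3.7, with the chain rule as Lemma~3.8). Your argument is correct and is precisely the standard proof from that reference --- reduce to $\varphi(J(W))=J(\varphi(W))$ via invariance of cyclic derivatives under cyclic equivalence, then establish the non-commutative chain rule for $\partial_c\circ\varphi$ and apply it symmetrically to $\varphi$ and $\varphi^{-1}$.
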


\begin{theorem}[Splitting theorem \cite{DWZ}]
For every QP $(Q,W)$ there exist a trivial QP $(Q_{triv},W_{triv})$ and a reduced QP $(Q_{red}, W_{red})$ such that $(Q,W)$ is right-equivalent to the direct sum $(Q_{triv},W_{triv}) \oplus (Q_{red}, W_{red})$. The right-equivalence class of each of the QPs $(Q_{triv},W_{triv})$ and $(Q_{red}, W_{red})$ is determined by the right-equivalence class of $(Q,W)$.
\end{theorem}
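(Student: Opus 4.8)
The plan is to follow the strategy of Derksen--Weyman--Zelevinsky \cite{DWZ}: first bring the degree-two part of $W$ into a normal form by a linear change of arrows, then use cyclic derivatives to iteratively absorb that part into a trivial direct summand, and finally prove uniqueness by showing that right-equivalences split along the decomposition.

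\textbf{Step 1 (normalizing the degree-2 part).} Write $W = W^{(2)} + W^{(\geq 3)}$ where $W^{(2)} \in A^2$ collects the $2$-cycles of $W$. For each ordered pair of vertices $(i,j)$ the arrows $i\to j$ and $j\to i$ span finite-dimensional spaces, and the coefficients of the $2$-cycles through $i$ and $j$ define a bilinear pairing between them. Applying a continuous $\mathbb{C}$-algebra automorphism $\phi$ of $R\langle\langle Q\rangle\rangle$ fixing every $e_i$ and sending each arrow to a linear combination of parallel arrows, we put these pairings into normal form; we thereby obtain an isomorphic QP in which $W^{(2)}$ is cyclically equivalent to $\sum_{k=1}^{r}\alpha_k\beta_k$ for $2r$ pairwise distinct arrows $\alpha_1,\beta_1,\dots,\alpha_r,\beta_r$, while all remaining arrows appear in no $2$-cycle of $W$.

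\textbf{Step 2 (absorbing the trivial part).} Let $C$ be the subquiver on $Q_0$ with arrow set $\{\alpha_k,\beta_k\}_{k=1}^{r}$ and $W_C = \sum_{k=1}^{r}\alpha_k\beta_k$; this QP is trivial, since $\partial_{\alpha_k}W_C=\beta_k$ and $\partial_{\beta_k}W_C=\alpha_k$ span $A_C$. Let $\bar Q$ be the subquiver on $Q_0$ with all the $\alpha_k,\beta_k$ removed. The cyclic derivatives give $\partial_{\beta_k}W = \alpha_k + u_k$ and $\partial_{\alpha_k}W = \beta_k + v_k$, with $u_k,v_k \in \mathfrak{m}^2$. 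One now builds a sequence of automorphisms $\phi_1,\phi_2,\dots$ of $R\langle\langle Q\rangle\rangle$, each the identity on vertices and on the arrows of $\bar Q$, that successively replace $\alpha_k \mapsto \alpha_k - (\text{a correction of order } t)$ and similarly $\beta_k$, so that after applying $\phi_t\cdots\phi_1$ the potential agrees with $W_C + (\text{a potential on }\bar Q)$ modulo $\mathfrak{m}^{t+3}$; the order-$t$ correction is read off from the corresponding graded piece of $\partial_{\beta_k}W$ and $\partial_{\alpha_k}W$. Since each correction raises the arrow-order by at least one, the infinite composite $\phi = \lim_{t}\phi_t\cdots\phi_1$ converges in the $\mathfrak{m}$-adic topology to an automorphism fixing the $e_i$, and $\phi(W)$ is cyclically equivalent to $W_C + W'$ for a potential $W'$ supported on $\bar Q$. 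One then checks that $(\bar Q,W')$ is reduced: because no surviving arrow lies in $W^{(2)}$ and the substitutions feed only terms of order $\geq 2$ in the $\alpha_k,\beta_k$ back into $W'$, no $2$-cycle among the surviving arrows can be created. Thus $(Q,W)$ is right-equivalent to $(C,W_C)\oplus(\bar Q,W')$, and we set $(Q_{triv},W_{triv}) = (C,W_C)$, $(Q_{red},W_{red}) = (\bar Q,W')$.

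\textbf{Step 3 (uniqueness) and the main obstacle.} Suppose $(Q,W)$ is right-equivalent both to $(C,W_C)\oplus(\bar Q,W')$ and to $(C_1,W_{C_1})\oplus(\bar Q_1,W_1')$ with the first summands trivial and the second reduced. Composing the two right-equivalences, it suffices to show that any right-equivalence between $(C\oplus \bar Q,\,W_C+W')$ and $(C_1\oplus \bar Q_1,\,W_{C_1}+W_1')$ forces $(C,W_C)$ right-equivalent to $(C_1,W_{C_1})$ and $(\bar Q,W')$ to $(\bar Q_1,W_1')$. A right-equivalence fixes vertices and has invertible linear part, hence preserves the rank of the degree-$2$ component of the potential between each ordered pair of vertices; this rank equals the number of $C$-arrows there, so $C$ and $C_1$ agree combinatorially and (both having potential $\sum\alpha_k\beta_k$ up to relabelling) are right-equivalent. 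For the reduced parts one argues that, after correcting by an automorphism of the target, the right-equivalence can be taken to preserve the two-sided ideal generated by the $C$-arrows; passing to the quotients, which are the complete path algebras of $\bar Q$ and $\bar Q_1$, the induced isomorphism carries $W'$ to something cyclically equivalent to $W_1'$. The genuinely delicate point is Step 2: producing the automorphism $\phi$ that eliminates every $2$-cycle at once without reintroducing new ones, and verifying $\mathfrak{m}$-adic convergence of the infinite composite and that $W'$ is truly reduced. This bookkeeping --- which graded pieces feed which correction --- is the heart of the splitting argument; by comparison, the uniqueness in Step 3 is softer, relying only on invariance of the degree-$2$ rank and a direct-summand argument for right-equivalences.
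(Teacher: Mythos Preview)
The paper does not prove this theorem at all: it is stated in Appendix~A purely as a quoted result from \cite{DWZ}, with no accompanying argument. There is therefore nothing in the paper to compare your proposal against. Your sketch does follow the outline of the original Derksen--Weyman--Zelevinsky proof (normalize $W^{(2)}$, iteratively substitute to split off the trivial summand via an $\mathfrak{m}$-adically convergent automorphism, then argue uniqueness from the invariance of the rank of the degree-two part), so as a summary of the cited source it is on the right track; but since the present paper only invokes the statement, a reference to \cite{DWZ} is all that is expected here.
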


We will now discuss mutations of quivers with potentials. Let $(Q,W)$ be a QP on the vertex set $Q_0$, and let $i \in Q_0$. We have no restrictions on $Q$, so it is possible that $Q$ has a loop or 2-cycle incident to $i$. We can replace $W$ with a cyclically equivalent potential, where none of the cyclic paths of length greater than 1 in the expression of $W$ begin at $i$. We can now define the potential $[W]$ on $Q$ as the potential obtained from $W$ by replacing every length-2 path $\alpha \beta$ passing through $i$ with the arrow $[\alpha \beta]$. We also define $\Delta_i(Q) = \sum \beta^*\alpha^*[\alpha \beta]$, where the sum runs over all length-2 paths $\alpha \beta$ through $i$. Now we set $\tilde{\mu}_i(W) = [W] + \Delta_i(Q)$, which is a potential on $\tilde{\mu}_i(Q)$, the quiver obtained by applying the first two steps of quiver mutation as in Def. \ref{def:mutation}.

\begin{definition} The mutation $\mu_i(Q,W)$ of $(Q,W)$ with respect to $i$ is defined as the reduced part of the QP $\tilde{\mu}_i(Q, W) = (\tilde{\mu}_i(Q), \tilde{\mu}_i(W))$.
\end{definition}

It's important to note that the underlying quiver of a mutated QP is not necessarily 2-acyclic. The potential determines whether or not we keep 2-cycles.

\subsection{Potential of a triangulation}

If we have two triangulations related by a flip, we know that the associated quivers are related by the corresponding quiver mutation. We want to lift this to the level of QPs, and see if the associated QPs are also related by a QP-mutation.

Let $T$ be a triangulation of a marked surface $(S,M)$, possible with punctures. Then the associated quiver has two types of oriented cycles: cycles arising from internal triangles of $T$, and simple cycles (cycles without repeated arrows) surrounding punctures. As before, we only consider cyclic equivalence classes of cycles.

In this generality (i.e. allowing punctures), Labardini-Fragoso \cite{DLF} provides the following definition of a potential associated to a triangulation.

\begin{definition}
Let $T$ be a triangulation of a marked surface $(S,M)$. The potential $W_T$ associated to $T$ is the potential on $Q_T$ that results from adding all the 3-cycles that arise from internal triangles of $T$, and all the simple cycles that surround the punctures of $(S,M)$.
\label{def:pot}
\end{definition}

In our situation with asymptotic triangulations of the annulus, we have no punctures, so the definition of a potential looks as follows:
\[
W_T = \Sigma \mbox{ internal 3-cycles}.
\]
Note that cycles between strictly asymptotic arcs do not appear in the potential.

\begin{example}
Let $T$ be the following triangulation of the punctured disk $D_4$:
\begin{center}
\begin{tikzpicture}[scale = 1]
		\tikzstyle{every node} = [font = \tiny]
		\foreach \x in {0}
		{
			\foreach \y in {0}
			{
			\draw[thick] (\x+0,\y+0) circle (1.5);
			\fill(\x,\y+0) circle (.07);		

			\fill(\x+1.5,\y) circle (.05);
			\draw[thick] (\x,\y) -- (\x+1.5,\y);		
			
			\fill(\x-1.5,\y) circle (.05);
			\draw[thick] (\x,\y) -- (\x-1.5,\y);

			\fill(\x+0,\y+1.5) circle (.05);
			\draw[thick] (\x,\y) -- (\x,\y+1.5);
			
			\fill(\x+0,\y-1.5) circle (.05);
			\draw[thick] (\x-1.5,\y) ..controls ( \x-.5, \y-1.25) and (\x+.5, \y-1.25) ..(\x+1.5,\y);

			\draw[->] (\x-.75, \y + .05) -- (\x - .05, \y+.75);
			\fill (\x-.5,\y+.5) node [] {$\rho$};
			
			\draw[<-] (\x+.75, \y + .05) -- (\x + .05, \y+.75);
			\fill (\x+.5,\y+.5) node [] {$\beta$};
			
			\draw[->] (\x-.75, \y - .05) -- (\x - .05, \y-.85);
			\fill (\x-.5,\y-.6) node [] {$\gamma$};
			
			\draw[<-] (\x+.75, \y - .05) -- (\x + .05, \y-.85);
			\fill (\x+.5,\y-.6) node [] {$\delta$};
			
			\draw[->] (\x+.65, \y - .05) .. controls (\x+.25, \y - .3) and (\x-.25, \y-.3) .. (\x - .65, \y-.05);
			\fill (\x,\y-.35) node [] {$\alpha$};
						}
		}
	\end{tikzpicture}
	\end{center}

Then the potential $W_T$ is $W_T = \alpha \gamma \beta + \alpha \beta \rho$. 
\end{example}

\begin{theorem} \cite{DLF}
Let $T$ and $T'$ be two triangulations of a marked surface $(S,M)$. If $T'$ is obtained from $T$ by flipping an arc $d_i$, then the QPs $(Q_{T'}, W_{T'})$ is obtained from the QP $(Q_{T},W_{T})$ via the QP mutation $\mu_i$.
\end{theorem}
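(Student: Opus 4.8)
The statement is Labardini-Fragoso's theorem from \cite{DLF}, and the proof is essentially local: flipping $d_i$ alters $T$ only inside the region $R$ consisting of the two triangles adjacent to $d_i$, so it suffices to analyze how the part of $(Q_T,W_T)$ supported on $R$ and its immediate neighborhood transforms, everything else being untouched by both the flip and the QP-mutation $\mu_i$. The plan is first to enumerate the possible local configurations of the two triangles glued along $d_i$: the generic case of an honest quadrilateral with four distinct sides, and the degenerate cases in which some sides of $R$ are identified with one another, a side is a boundary segment, one of the two triangles is self-folded, or a puncture sits at a vertex of $R$. For each configuration one writes down explicitly the arrows of $Q_T$ incident to vertex $i$ together with the finitely many cyclic paths of $W_T$ running through $i$ --- namely the $3$-cycles coming from the two triangles meeting $d_i$ (when those triangles are internal) plus, if a puncture lies on $R$, the puncture-cycle through $i$.

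Next I would apply $\tilde\mu_i$ to this local picture: replace $W_T$ by a cyclically equivalent potential none of whose cycles of length $>1$ begin at $i$, reverse the arrows at $i$, adjoin a composite arrow $[\alpha\beta]$ for every length-two path $\alpha\beta$ through $i$, and form the potential $[W_T]+\Delta_i(Q_T)$ with $\Delta_i(Q_T)=\sum \beta^*\alpha^*[\alpha\beta]$ as in Definition \ref{def:mutation} lifted to QPs. Then I would carry out the splitting: identify the trivial direct summand of $\tilde\mu_i(Q_T,W_T)$ --- the $2$-cycles of the form $[\alpha\beta]\beta^*\alpha^*$ together with whatever $2$-cycles in $[W_T]$ they pair off against --- perform the change of variables that annihilates it, and read off the reduced QP $(Q_{\mathrm{red}},W_{\mathrm{red}})$. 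Finally I would compare with $(Q_{T'},W_{T'})$: the underlying quiver $Q_{\mathrm{red}}$ must coincide with $Q_{T'}$ (the classical fact that arc flips realize quiver mutation, already available), and $W_{\mathrm{red}}$ must be shown to be cyclically equivalent to the potential assigned to $T'$ directly by Definition \ref{def:pot}, i.e. the sum of $3$-cycles from the internal triangles of $T'$ plus the puncture-cycles of $(S,M)$. Matching the $3$-cycles coming from the two new triangles of $T'$ is exactly the content of the computation, and the puncture-cycles must be tracked with care whenever a puncture lies on $R$.

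I expect the main obstacle to be the bookkeeping in the degenerate configurations, particularly self-folded triangles and punctures adjacent to $d_i$: there $\tilde\mu_i$ produces extra $2$-cycles or loops that cancel only after a nontrivial right-equivalence, and the very definition of $W_{T'}$ involves puncture-cycles whose coefficients and orientations must be reconciled with what the mutation outputs. A secondary subtlety is verifying that the intermediate QPs remain in a class for which the Splitting Theorem determines $(Q_{\mathrm{red}},W_{\mathrm{red}})$ unambiguously up to right-equivalence --- this is where one uses that $W_T$ is built from triangle- and puncture-cycles rather than being an arbitrary potential. In the setting actually needed here, the unpunctured annulus $C_{p,q}$, all of these degeneracies are absent, $W_T$ is simply $\Sigma$ (internal $3$-cycles), and the verification collapses to the single generic quadrilateral computation, so the argument in our case is short.
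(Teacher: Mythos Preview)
The paper does not prove this theorem: it is stated with the citation \cite{DLF} and no proof, serving only as background in Appendix~A before the discussion of QPs of asymptotic triangulations. There is therefore nothing in the paper to compare your proposal against.

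That said, your outline is an accurate sketch of how Labardini-Fragoso's argument actually proceeds in \cite{DLF}: the proof is local to the two triangles adjacent to $d_i$, one enumerates the possible configurations (generic quadrilateral, boundary edges, self-folded triangles, punctures on the boundary of the region), computes $\tilde\mu_i(Q_T,W_T)$ explicitly in each, and checks after reduction that the result is right-equivalent to $(Q_{T'},W_{T'})$. Your identification of the self-folded and punctured cases as the delicate ones is correct --- in fact Labardini-Fragoso's definition of $W_T$ carries nontrivial scalar coefficients on the puncture-cycles precisely to make these cases work, a detail your sketch elides but would need to confront in a full proof. Your closing remark that for the unpunctured annulus $C_{p,q}$ only the generic case arises and the computation is short is also correct and is the only part relevant to the present paper.
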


\subsection{QPs of asymptotic triangulations}
Let $T = T_\partial \sqcup T_{\partial'}$ be an asymptotic triangulation. We now look at QPs and QP-mutation of the associated quiver $Q_T = Q_{\partial} \sqcup Q_{\partial'}$.

Consider a partial asymptotic triangulation $T_\beta$ of $C_{p,q}$ and its associated quiver $Q_\beta$. Following Definition \ref{def:pot}, the potential $W_\beta = W_{T_\beta}$ associated to $T_\beta$ results from adding all the 3-cycles in the quiver $Q_\beta$. 

As stated earlier, the 2-cyclicity of a quiver relies heavily on the potential. When we work with the triangulation and quiver side-by-side, it's easy to determine the potential. However, if we're given a quiver $Q_T$, we want to be able to perform QP mutation without seeing what happens in $T$. We can consider two types of quivers that we associate to $T$. The first is as described in Section 3.1 with framing vertices. There we can start with any framing quiver associated to an asymptotic triangulation, read off the potential directly from the quiver, and perform the QP mutation.

Here we describe how to define mutation on the ``classical" quiver (as in Def \ref{def:QT}) via QPs. Let $T = T_\partial \sqcup T_{\partial'}$ be an asymptotic triangulation. As mentioned earlier, the quiver $Q_T = Q_\partial \sqcup Q_{\partial'}$ may contain 2-cycles or loops. Furthermore, both $Q_\partial$ and  $Q_{\partial'}$ contain a negatively-oriented cycle around the meridian $z$. In particular, if all arcs are strictly asymptotic, the quiver $Q_\partial$ and $Q_{\partial'}$ are both simple cycles $\alpha_1 \ldots \alpha_p$ and $\alpha_{p+1} \ldots \alpha_{p+q}$. This is the type of quiver where framing vertices are identified. In this quiver model, we need to start with a strictly asymptotic triangulation (all arcs of the triangulation $T_\beta$ are strictly asymptotic), and because we have no internal triangles in $T_\beta$ our potential $W_\beta = 0$. We make this specification because our quiver may show 3-cycles that arise from going around the meridian, and we do not want this to be included in our potential. By starting with a potential $W_\beta = 0$, we can now work with quiver and QP mutation, and we will be able to keep certain loops and 2-cycles in $Q_\beta$, while eliminating others.

\begin{example}
Consider the potential $W = 0$ on the quiver
		\begin{center}
		\begin{tikzpicture}[scale = .25]
		\tikzstyle{every node} = [font = \tiny]
		\foreach \x in {0}
		{
			\foreach \y in {-8}
			{
			\fill (\x-3,\y+17) circle (.15);
			\fill(\x,\y+19.5) circle (.15);
			\fill(\x+3,\y+17) circle (.15);
			
			\fill[] (\x-3,\y+17) node [left] {$1$};
			\fill[] (\x,\y+19.5) node [above] {$2$};
			\fill[] (\x+3,\y+17) node [right] {$3$};
			
			\fill[] (\x-1,\y+19) node [left] {$c$};
			\fill[] (\x+1,\y+19) node [right] {$b$};
			\fill[] (\x-0,\y+16.8) node [above] {$a$};
		
			\draw [->,thick] (\x-2.75,\y+17.2)--(\x-.2,\y+19.4); 
			\draw [<-,thick] (\x-2.75,\y+16.8) -- (\x+2.75,\y+16.8); 

			\draw [->,thick] (\x+0.25,\y+19.4) -- (\x+2.8,\y+17.2);

			}
		}
		\end{tikzpicture}
		\end{center}

If we perform the \emph{premutation} $\tilde{\mu}_2$ on $(Q,W)$, we get $(\widetilde{Q},\widetilde{W})$ where $\tilde{Q}$ is the arrow span of the quiver

		\begin{center}
		\begin{tikzpicture}[scale = .25]
		\tikzstyle{every node} = [font = \tiny]
		\foreach \x in {0}
		{
			\foreach \y in {-8}
			{
			\fill (\x-3,\y+17) circle (.15);
			\fill(\x,\y+19.5) circle (.15);
			\fill(\x+3,\y+17) circle (.15);
			
			\fill[] (\x-3,\y+17) node [left] {$1$};
			\fill[] (\x,\y+19.5) node [above] {$2'$};
			\fill[] (\x+3,\y+17) node [right] {$3$};
			
			\fill[] (\x-1,\y+19) node [left] {$c^*$};
			\fill[] (\x+1,\y+19) node [right] {$b^*$};
			\fill[] (\x-0,\y+16.8) node [above] {$a$};
			\fill[] (\x-0,\y+16.8) node [below] {$[bc]$};
		
			\draw [<-,thick] (\x-2.75,\y+17.2)--(\x-.2,\y+19.4); 
			\draw [<-,thick] (\x+0.25,\y+19.4) -- (\x+2.8,\y+17.2);
			\draw [<-,thick] (\x-2.75,\y+16.8) -- (\x+2.75,\y+16.8); 
			\draw [->,thick] (\x-2.75,\y+16.5) -- (\x+2.75,\y+16.5); 
			}
		}
		\end{tikzpicture}
		\end{center}

and $\widetilde{W} = c^*b^*[bc]$. Then $\tilde{\mu}_2(Q,W) = \mu_2(\widetilde{Q},\widetilde{W}) = (Q',W')$. Now if we want to mutate at vertex 3, we perform the same steps. First we have the premutation $\tilde{\mu}_3$ on $(Q',W')$: 
	\begin{center}
		\begin{tikzpicture}[scale = .25]
		\tikzstyle{every node} = [font = \tiny]
		\foreach \x in {0}
		{
			\foreach \y in {-8}
			{
			\fill (\x-3,\y+17) circle (.15);
			\fill(\x,\y+19.5) circle (.15);
			\fill(\x+3,\y+17) circle (.15);
			
			\fill[] (\x-3,\y+17) node [left] {$1$};
			\fill[] (\x,\y+19.5) node [above] {$2'$};
			\fill[] (\x+3,\y+17) node [right] {$3$};
			
			\fill[] (\x-1.3,\y+19) node [left] {$[b^*[bc]]$};
			\fill[] (\x-.9,\y+18.9) node [below] {$c^*$};
			\fill[] (\x+1,\y+19) node [right] {$b^{**}=b$};
			\fill[] (\x+.2,\y+16.8) node [above] {$a^*$};
			\fill[] (\x+.2,\y+16.8) node [below] {$[bc]^*$};
			\fill[] (\x-4.5, \y+17) node [left] {$[[bc]a]$};
		
			\draw [->,thick] (\x-3,\y+17.5)--(\x-.6,\y+19.4); 
			\draw [<-,thick] (\x-2.73,\y+17.2)--(\x-.2,\y+19.2); 
			\draw [->,thick] (\x+0.25,\y+19.4) -- (\x+2.8,\y+17.2);
			\draw [->,thick] (\x-2.75,\y+16.8) -- (\x+2.75,\y+16.8); 
			\draw [<-,thick] (\x-2.75,\y+16.5) -- (\x+2.75,\y+16.5); 
			\draw [<-,thick] (\x-3.2,\y+16.5) .. controls (\x-5, \y+15) and (\x-5, \y+19) .. (\x-3.2,\y+17.5); 
			}
		}
		\end{tikzpicture}
		\end{center}

and our potential is $$\widetilde{W'} = c^* [b^*[bc]] + [bc]^*b[b^*[bc]] + a^*[bc]^*[[bc]a] = (c^* + [bc]^*b)[b^*[bc]] + a^*[bc]^*[[bc]a]. $$ Now we can check that $(c^* + [bc]^*b)$ is right-equivalent to an arrow $D: 2' \to 1$, which gives the potential $$\widetilde{W}' = D[b^*[bc]] + a^*[bc]^*[[bc]a] ,$$ and our Jacobian algebra is $P(\widetilde{Q}',\widetilde{W}') = R\langle\langle \widetilde{Q}' \rangle \rangle / J(\widetilde{W}')$, where our Jacobian ideal $J(\widetilde{W}')$ gives us the relations $D =0$ and $[b^*[bc]] = 0$, along with other relations. Thus our mutated potential $\mu_3(\widetilde{Q}',\widetilde{W}') = (Q'',W'')$ where $Q''$ is the arrow span of the quiver

		\begin{center}
		\begin{tikzpicture}[scale = .25]
		\tikzstyle{every node} = [font = \tiny]
		\foreach \x in {0}
		{
			\foreach \y in {-8}
			{
			\fill (\x-3,\y+17) circle (.15);
			\fill(\x,\y+19.5) circle (.15);
			\fill(\x+3,\y+17) circle (.15);
			
			\fill[] (\x-3,\y+17) node [left] {$1$};
			\fill[] (\x,\y+19.5) node [above] {$2'$};
			\fill[] (\x+3,\y+17) node [right] {$3'$};
			
			\fill[] (\x+1,\y+19) node [right] {$b$};
			\fill[] (\x+.2,\y+16.8) node [above] {$a^*$};
			\fill[] (\x+.2,\y+16.8) node [below] {$[bc]^*$};
			\fill[] (\x-4.5, \y+17) node [left] {$[[bc]a]$};
		
			\draw [<-,thick] (\x+0.25,\y+19.4) -- (\x+2.8,\y+17.2);
			\draw [->,thick] (\x-2.75,\y+16.8) -- (\x+2.75,\y+16.8); 
			\draw [<-,thick] (\x-2.75,\y+16.5) -- (\x+2.75,\y+16.5); 
			\draw [<-,thick] (\x-3.2,\y+16.5) .. controls (\x-5, \y+15) and (\x-5, \y+19) .. (\x-3.2,\y+17.5); 
			}
		}
		\end{tikzpicture}
	\end{center}
 
\end{example}

\bigskip

\def\t{\widetilde}

\appendix
\setcounter{section}{1}
\section{Cluster structure on asymptotic triangulations}
\begin{center}
{by Anna~Felikson and  Pavel~Tumarkin\footnote[1]{Department of Mathematical Sciences, Durham University, Science Laboratories, South Road, Durham, DH1 3LE, UK. email: \tt{anna.felikson@durham.ac.uk}, \tt{pavel.tumarkin@durham.ac.uk}}} 
\end{center}

\vspace{10pt} 

The aim of this short addendum is to introduce an alternative  cluster structure on asymptotic triangulations and provide a geometric interpretation.

\subsection{Double cover and double quiver}
As it is shown in Section~3, the most natural way to build a quiver from an asymptotic triangulation (i.e.  adjacency quiver) leads to loops and 2-cycles. To avoid this, consider a double cover $\widetilde C_{p,0}$ of the annulus $C_{p,0}$.
An asymptotic triangulation $T$ on $C_{p,0}$ induces an asymptotic triangulation $\t T$  on $\t C_{p,0}$, and the signed adjacency  quiver $ Q(\t T)$ of $\t T$ is free of loops and 2-cycles, so one can mutate it applying usual rules.

A flip of an arc  $d_i\in  T$ lifts as a composition of two commuting flips of arcs $d_i^1$ and $d_i^2$ in $\t T$, so, it has the same effect as a composition of two commuting mutations of $Q(\t T)$.

\subsection{Variables}  
To the arcs $d_1,\dots d_n$ of $T$ we assign independent variables $x_1,\dots, x_n$. Lifting this to the double cover results in pairs of identical variables 
$x_i^1,x_i^2$. To mutate the variables we use the usual exchange relations provided by the quiver $Q(\t T)$.
Since the initial quiver  $Q(\t T)$ is symmetric (with symmetrically assigned initial variables) and each mutation is a composition of the symmetric commuting mutations,
the symmetric structure on $Q(\t T)$ is preserved under mutations. 
We can also consider an exchange graph $\Gamma$ of  $(Q(\t T), \{ x_1,\dots, x_n \}) $ consisting of seeds obtained by composite mutations preserving the initial symmetry. 

\begin{figure}[h!]
\begin{center}
\psfrag{a}{\small $a$}
\psfrag{b}{\small $b$}
\psfrag{1}{\scriptsize 1}
\psfrag{2}{\scriptsize 2}
\psfrag{f1}{$(x,y)$}
\psfrag{f2}{$(\frac{y}{x},y)$}
\psfrag{f3}{$(\frac{y}{x},\frac{a+b}{x})$}
\psfrag{f4}{$(\frac{(a+b)^2}{y},\frac{a+b}{x})$}
\psfrag{f5}{$(\frac{(a+b)^2}{y},\frac{x(a+b)}{y})$}
\psfrag{f6}{$(x,\frac{x(a+b)}{y})$}
\includegraphics[width=.89\linewidth]{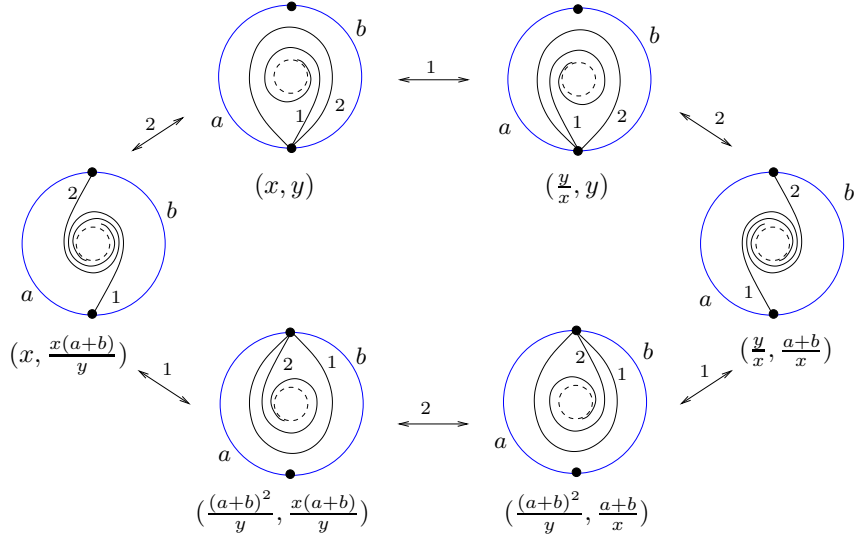}
\caption{Lambda lengths of the curves on a connected component of   $D_z^{\infty}(C_{2,q})$.}
\label{ex1}
\end{center}
\end{figure}

\subsection{Geometric interpretation of variables}

Consider the annulus $C_{p,q}$  as a surface with hyperbolic metric. While applying  Dehn twists in a closed curve $z$,
we can renormalise the metric on  $D_z^{\infty}(C_{p,q})$ so that the limit of the length of  $z$ is equal to $0$. 
Then  we can consider $D_z^{\infty}(C_{p,q})$ as a disjoint union of two hyperbolic punctured discs $\mathcal C_p$ and $\mathcal C_q$.  
Hence we are able to measure lambda lengths of the curves of the asymptotic triangulation (including strictly asymptotic arcs). 
Combinatorially, $z$  becomes a puncture, Pr\"ufer arcs  are tagged plane, adic arcs are tagged notched. 
See Fig.~\ref{ex1} for an example of an exchange graph and corresponding lambda lengths on $\mathcal C_{2}$ obtained in this way.

Now, assign to $x_1,\dots,x_n$  the values equal to the lambda lengths of the lifts of the arcs $d_1,\dots,d_n$ on  the double cover of  $D_z^{\infty}(C_{p,q})$. Then the cluster variables in the exchange graph $\Gamma$ 
will model the lambda lengths of arcs of asymptotic triangulations of  $D_z^{\infty}(C_{p,q})$.
More precisely, assuming (without loss of generality) that the initial asymptotic triangulation $T$ contained no adic arcs, the lambda lengths of finite arcs and Pr\"ufer arcs will be exactly equal to the values of the corresponding cluster variables, and the lambda lengths of the adic arcs will be halves of the corresponding variables
(this is caused by the fact that the length of the corresponding horosphere around the limit of $z$ is doubled in the cover 
$\t D_z^{\infty}(C_{p,q})$ of $D_z^{\infty}(C_{p,q})$).
See Fig.~\ref{ex2} for an example.

\begin{figure}[h!]
\begin{center}
\psfrag{1}{$1$}
\psfrag{2}{$2$}
\psfrag{x}{\scriptsize $1$}
\psfrag{x1}{\scriptsize $1$}
\psfrag{x2}{\scriptsize $1$}
\psfrag{y}{\scriptsize $2$}
\psfrag{y1}{\scriptsize $2$}
\psfrag{y2}{\scriptsize $2$}
\psfrag{a}{\scriptsize $a$}
\psfrag{b}{\scriptsize $b$}
\psfrag{f1}{$(x_1,x_2)=(x,y)$}
\psfrag{f2}{$(x_1,x_2)=(\frac{2y}{x},y)$}
\psfrag{f3}{$(x_1,x_2)=(\frac{2y}{x},\frac{2(a+b)}{x})$}
\psfrag{f4}{$(x_1,x_2)=(\frac{(a+b)^2}{y},\frac{2(a+b)}{x})$}
\psfrag{f5}{$(x_1,x_2)=(\frac{(a+b)^2}{y},\frac{x(a+b)}{y})$}
\psfrag{f6}{$(x_1,x_2)=(x,\frac{x(a+b)}{y})$}
\includegraphics[width=.99\linewidth]{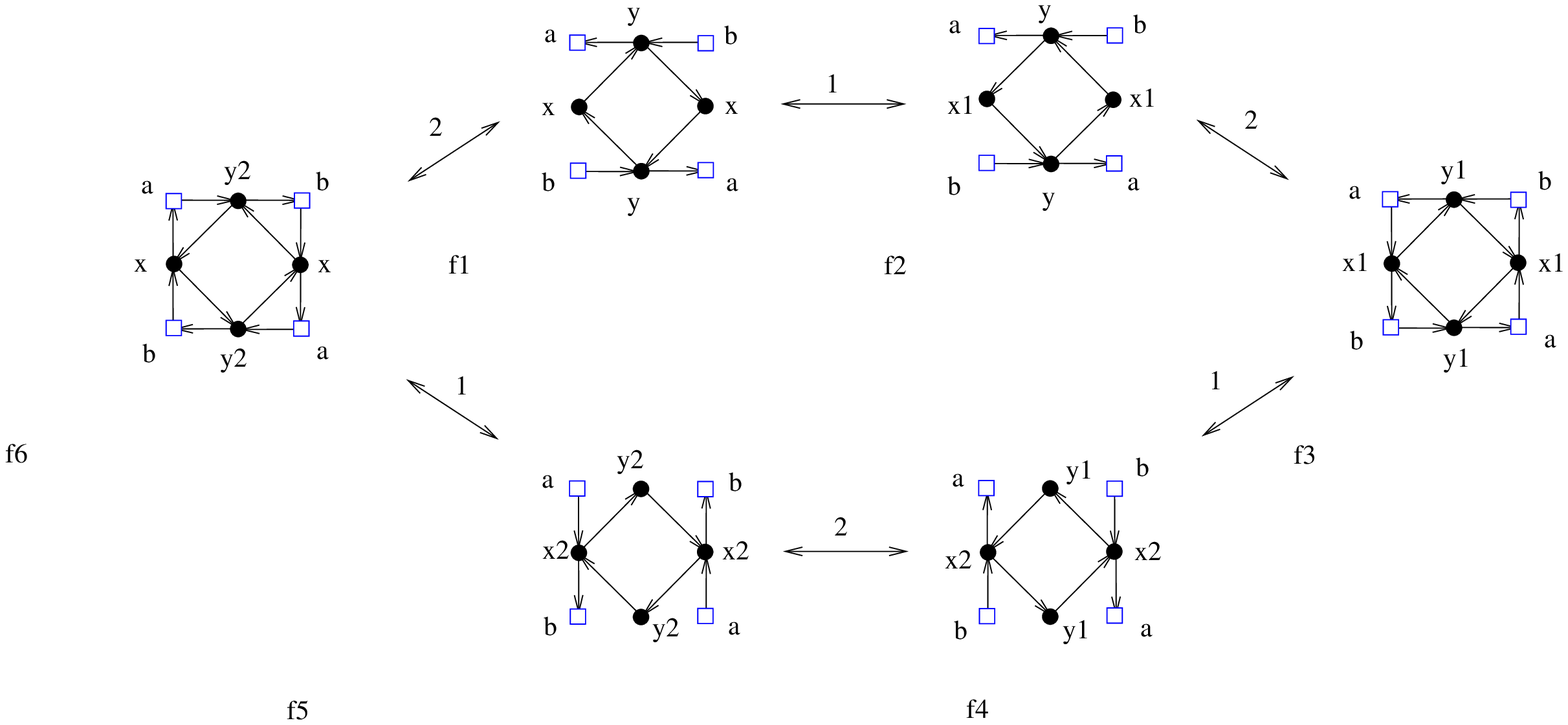}
\caption{Exchange graph $\Gamma$ and cluster variables for a connected component of $\t D^\infty_z(C_{2,q})$.}
\label{ex2}
\end{center}
\end{figure}

\subsection{From annulus to general hyperbolic surface}
The same procedure as described above for an annulus can be done for any triangulated hyperbolic surface $S$:
one can choose any simple closed curve $z\subset S$ and apply a sequence of Dehn twists in $z$, so that $z$ becomes shorter and shorter in a renormalised metric and turns into a cusp in the limit. A triangulation $T$ of $S$ turns into an asymptotic triangulation of $D_z^{\infty}(S)$. 
If in addition there exists a double cover of $S$ such that the curve $z$ is covered by one (twice longer) curve, then we can build the quiver of the asymptotic triangulation and realise corresponding variables as lambda lengths.

\medskip
\noindent
{\bf Acknowledgements.} We are grateful to Karin Baur and Michael Shapiro for inspiring discussions.

\bigskip

\end{document}